\newtheorem{theorem}{Theorem}[section] 
\newtheorem{definition}[theorem]{Definition}
\newtheorem{lemma}[theorem]{Lemma}
\newtheorem{prop}[theorem]{Proposition}
\newtheorem{corollary}[theorem]{Corollary}
\newtheorem{remark}[theorem]{Remark}
\newtheorem{example}[theorem]{Example}
\newtheorem*{definition_No_nu}{Definition}
\theoremstyle{definition}
\def\Ae{\mbox{\AE}}
\def\MFIN{\mbox{MFIN}}
\def\FIN{\mbox{FIN}}
\def\COFIN{\mbox{COFIN}}
\def\ep{\varepsilon}
\def\Lip{{\rm Lip}}
\def\ult{\mathcal U}
\def\A{\mathcal A}
\def\B{\mathcal B}
\def\F{\mathcal F}
\def\OF{\overline{\F}}
\def\I{\mathcal I}
\def\L{\mathcal L}
\def\J{\mathcal J}
\def\W{\mathcal W}
\def\min{{\rm min}}
\def\max{{\rm max}}
\def\reg{{\rm reg}}
\def\sur{{\rm sur}}
\def\spa{{\rm span}}
\def\ker{{\rm Ker}}
\begin{document}
\title[]{On the Lipschitz operator ideal $\Lip_{0}\circ \A\circ \Lip_{0}$}
\author{Nahuel Albarrac\'{\i}n, Pablo Turco}

\keywords{Lipschitz operator ideals, Banach operator ideals, Maximal operator ideal, Minimal operator ideal}
\subjclass[2020]{47L20, 47H99, 47B10, 46B08}

\address{IMAS - UBA - CONICET - Pab I,
Facultad de Cs. Exactas y Naturales, Universidad de Buenos
Aires, (1428) Buenos Aires, Argentina}

\email{nalbarracin@dm.uba.ar}
\email{paturco@dm.uba.ar}

\begin{abstract}
We study a systematic way to produce a Lipschitz operator ideal from a Banach linear operator ideal $\A$. For maximal and minimal operator ideals $\A$, the Lipschitz maximal hull and minimal kernel of the Lipschitz operator ideals $\Lip_0 \circ \A \circ \Lip_0$ are investigated, respectively. 
Using ultraproduct techniques, we obtain the Lipschitz version of a result of K{\"u}rsten and Piestch which characterizes the maximal hull of any Lipschitz operator ideal. Among other results, we characterize the linear operators which belong to $\Lip_0\circ \A\circ \Lip_0$ which, in many cases, they are precisely those which are in $\A$. In particular, we give some cases in which a nonlinear factorization of linear operators implies a linear one, in terms of a given Banach linear operator ideal $\A$.
\end{abstract}
\maketitle

\section*{Introduction}
In the last years, several authors have studied different classes of Lipschitz maps between Banach spaces that have, in some way, analogous properties of well-known linear Banach operator ideals (in the sense of Pietsch). For instance, Farmer and Johnson \cite{FJ} introduced the Lipschitz $p$-summing operators and showed that they possess a Lipschitz version of the Pietsch factorization theorem. Also, the linear operators which are Lipschitz $p$-summing are exactly the (linear) $p$-summing operators. In the article of Johnson, Maurey and Schechtman \cite{JMS}, the class of Lipschitz mappings which factorize through an arbitrary $L_p$ space is introduced and they showed that if a linear operator from a Banach space to a dual space can be factorized through an arbitrary $L_p$ space via two Lipschitz maps, then it can be factorized through $L_p$ via two linear operators. This allowed them to show that if a Banach space is uniformly or coarsely equivalent to a $\L_1$ space, then it is linearly equivalent to a $\L_1$ space (i.e. is a $\L_1$ space) \cite[Corollary~4]{JMS}, answering a question posted by Heinrich and Mankiewicz.  Since then, many articles were dedicated to the study of different classes of Lipschitz maps defined between metric or Banach spaces which {\it extend} different classes of linear operators (see e.g. \cite{ADT, ARY, BeChe, CPJV, ChaDo12, ChaDo,  CZ, JVSMVV14, KhaSa}, among others). The necessity to study these different classes in a general framework, unifying results and the language, leads to the new concept of Banach Lipschitz operator ideal (see definition below). In general, these ideals {\it extend} the  Banach linear operator ideals. It is convenient to introduce what we mean by extension.

\begin{definition_No_nu} Let $\A$ be a class of linear operators and $\I$ a class of Lipschitz operators. We say that $\I$ extends $\A$ for the Banach spaces  $E$ and $F$ if  
$$
\I \cap \L(E,F)=\A(E,F).
$$
In other words, the linear operators from $E$ to $F$ which belong to $\I$ are exactly those which belong to $\A$.
\end{definition_No_nu}

As was pointed out by Farmer and Johnson respect $p$-summing operators, a  natural question arises \cite[Problem~6]{FJ}. Let $\A$ be a Banach (linear) operator ideal and $\I$ a Banach Lipschitz operator ideal that extends $\A$. What results about $\A$ have analogues for $\I$?. 

From a fixed Banach operator ideal $\A$, there is a canonical way to obtain a Banach Lipschitz operator ideal. Following \cite[Definition~3.1]{ARSPY}, for a (pointed) metric space $X$ and a Banach space $E$, a Lipschitz map $f\colon X\rightarrow E$ belongs to the composition ideal $\A\circ \Lip_0(X,E)$ if there exists a Banach space $G$, a Lipschitz function $g\colon X\rightarrow G$ and a linear operator $T\in \A(G,E)$ such that $f=T\circ g$. A  Banach Lipschitz operator ideal $\I$ is of {\it composition type} if there exists a Banach operator ideal $\A$ such that $\I=\A \circ \Lip_0$. This procedure to obtain a Banach Lipschitz operator ideal from $\A$, extends $\A$ for separable Banach spaces \cite[Proposition~3.2]{TV}. Also, in the case when $\A$ is a maximal Banach operator ideal, $\A\circ \Lip_0$ extends $\A$ for any Banach spaces $E$ and $F$ \cite[Proposition~3.3]{TV}. In other words, if $T$ is a linear operator which can be factorized as $T=R\circ f$ where $f$ is a Lipschitz map and $R$ is a linear operator which belongs to $\A$, then $T$ belongs to $\A$. However, in general there are different Banach Lipschitz operator ideals which extend the same Banach (linear) operator ideal. For example, there are at least 2 Banach Lipschitz operator ideals which extend the ideal of $p$-summing operators \cite[Proposition~3.17]{TV}.

The main objective of the article is to study some properties of Banach Lipschitz operator ideals obtained from Banach linear operator ideals as follows. Given a Banach operator ideal $\A$, a pointed metric space $X$ and a Banach space $E$, the Lipschitz ideal $\Lip_0\circ \A \circ \Lip_0(X,E)$ consists of all Lipschitz functions $f\colon X\rightarrow E$ such that there exist Banach spaces $G_1, G_2$, Lipschitz functions $g_1\colon X\rightarrow G_1$, $g_2\colon G_2\rightarrow E$ and a linear map $T\in \A(G_1,G_2)$ such that $f=g_2\circ T \circ g_1$. We will focus in the case when $\A$ is a minimal or maximal Banach operator ideal. We will see that in most of the cases, this type of Lipschitz ideals are not of composition type.

The article is divided as follows. In Section~\ref{Sec: Background} we fix the notation, state the definitions and general results that will be used. In Section~\ref{Sec: Minimal} we will consider the case when $\A$ is minimal. We will show that $\Lip_0 \circ \A \circ \Lip_0$ is not minimal as a Lipschitz operator ideal (Corollary~\ref{Coro: Nominimal}) and we will show in which cases $\Lip_0 \circ \A \circ \Lip_0$ and its minimal kernel extends $\A$ (Corollary~\ref{Coro:1raext} and Theorem~\ref{Theo: lipAlipmin}). In particular, we obtain that, under some assumption on the Banach spaces $E$ or $F$, if $T\colon E\rightarrow F$ is a linear operator which can be factorized as $T=f_2\circ R\circ f_1$ where $f_1$ and $f_2$ are Lipschitz maps and $R$ is a linear operator which belongs to $\A$, then $T$ belongs to $\A$. 

The main result in Section~\ref{Sec: Maximal} is a characterization of maximal Banach Lipschitz operator ideals extending the result of K{\"u}rsten and Pietsch for Banach operator ideals: A Banach Lipschitz operator ideal is maximal if and only if it is regular and ultrastable (Theorem~\ref{Theo:_Reg_and_Ultrast}). This result allows us to describe the maximal hull of $\Lip_0 \circ \A \circ \Lip_0$ when $\A$ is maximal.

 In Section~\ref{Sec: Restriction} we investigate which Banach operator ideal do $\Lip_0\circ \A \circ \Lip_0$ and its maximal hull extend when the ideal $\A$ is maximal. We show that for some maximal Banach operator ideals $\A$, $\Lip_0 \circ \A \circ \Lip_0$ extends $\A$. In other words, under some assumption over $\A$, if $T\colon E\rightarrow F$ is a linear operator which can be factorized as $T=g_2\circ R\circ g_1$ where $g_1$ and $g_2$ are Lipschitz maps and $R$ is a linear operator which belongs to $\A$, then $T$ belongs to $\A$. (Theorem~\ref{thm: Factorization}). This type of result implies that for a class of linear operators, certain non-linear factorization of them implies a linear one. This can be compared with results obtained in \cite{CZ, FJ,JMS}. We finish with a brief discussion on when $\Lip_0\circ \A\circ\Lip_0$ is of composition type or not.

We refer to Pietsch's book \cite{Pie} and the book of Defant and Floret \cite{DF} for the theory of Banach (linear) operator ideals, to the book of Weaver \cite{Wea} for the theory of Lipschitz maps and to the book of Benyamini and Lindenstrauss \cite{BL} for the theory of non-linear geometry of Banach spaces.

\section{Notation and general background}\label{Sec: Background}

Throughout the manuscript, $X, Y$ will be metric spaces and $d_X, d_Y$ their distance functions, respectively. $E$ and $F$ will denote real Banach spaces with the norms $\|{\cdot}\|_E$ and $\|{\cdot}\|_F$ respectively. Whenever the space is understood, we will simply write $d$ or $\|{\cdot}\|$. The open unit ball of $E$ will be denote by $B_E$, and the dual and bidual space of $E$ are denoted by $E'$ and $E''$. The canonical inclusion of $E$ into $E''$ is denoted by $J_E$. A pointed metric space is a metric space with a distinguished point, that we will always denote by $0$. In particular, a normed vector space is a pointed metric space and its distinguished point will be $0$. As usual, the space of all linear continuous, weakly compact, approximable and finite-rank  operators between $E$ and $F$ will be denoted by $\L(E,F), \W(E,F), \OF(E,F)$ and $\F(E,F)$ respectively. In the sequel, we will need the notion of regular Banach operator ideal. The regular hull of a Banach operator ideal $\A$, denoted as $\A^{\reg}$, are those operators $T\colon E\rightarrow F$ such that $J_F\circ T\in \A(E,F'')$, together with the norm $\|T\|_{\A^{\reg}}=\|J_F\circ T\|_\A$.  We say that $\A$ is regular if it coincides with its regular hull.

Recall that for two metric spaces $X$ and $Y$, a map $f\colon X\rightarrow Y$ is said to be a \textit{Lipschitz map} if there exists a constant $C>0$ such that $d(f(x_1),f(x_2))\leq C d(x_1,x_2)$ for all $x_1, x_2 \in X$. The least of such constants will be denoted as $\Lip(f)$. We denote by $\Lip_0(X,Y)$ the set of all Lipschitz maps from $X$ to $Y$ that vanish at $0$. If we consider a Banach space $E$, the space $\big(\Lip_0(X,E);\Lip(\cdot)\big)$ becomes a Banach space. In the case when $E=\mathds R$, we write $\Lip_0(X,\mathds R)=X^\#$ and it will be called as the \textit{Lipschitz dual} of $X$. We refer to the book of Weaver \cite{Wea} for more about this space.
For any pointed metric space $X$, the {\it Arens-Eells space} (or the Lipschitz free space over $X$) will be denoted as $\Ae(X)$.  If we consider the Dirac map $\delta_X\colon X \rightarrow (X^\#)'$ defined as $\delta_X(x)(f)=f(x)$, we have the equality
$$
\Ae(X)=\overline{\spa\big\{\delta_X(x) \colon x \in X\big\}}\subset (X^\#)'
$$
(see e.g. \cite[Definition~1.1]{GK}). By \cite{AE56}, we have that $X^\#$ is a dual Banach space and $\Ae(X)'=X^\#$. 

For $f\in\Lip_0(X,Y)$, there exists a unique linear operator $\widehat f\in\L(\Ae(X),\Ae(Y))$ such that $\delta_Y\circ f=\widehat f\circ \delta_X$ (see e.g. \cite[Theorem~3.6]{Wea}). When $E$ is a Banach space, there is a linear quotient map $\beta_E \colon \Ae(E)\rightarrow E$, called the \textit{barycenter map}, which is a left inverse of $\delta_E$, meaning $\beta_E \circ \delta_E=Id_E$. There is an isometric isomorphism between $\Lip_0(X,E)$ and $\L(\Ae(X),E)$. Indeed, given $f\in \Lip_0(X,E)$, there exists a unique linear operator $L_f \in \L(\Ae(X),E)$ such that $f=L_f\circ \delta_X$ with $\Lip(f)=\|L_f\|$ (see e.g. \cite[Proposition~2.2.4]{Wea}). The linear operator $L_f$ is defined as $L_f=\beta_E\circ\widehat f$ and satisfies $L_f(\delta_{X}(x))=f(x)$. We will refer the operator $L_f$ as \textit{the linearization} of $f$. 

A function between Banach spaces $f\colon E \rightarrow F$  is said to be Gateaux differentiable at $x_0 \in E$ if there exists an operator $T\in \L(E,F)$ such that 
$$
T(x)=\lim_{t\rightarrow 0} \dfrac{ f(x_0+tx)-f(x_0)}t
$$
for every $x\in E$. The operator $T$ is called the {\it Gateaux derivative of $f$ at $x_0$} and it is denoted by $Df(x_0)$. Note that, in particular if $f$ is Lipschitz and Gateaux differentiable at $x_0$ then $\|Df(x_0)\|\leq \Lip_0(f)$.

By a \textit{Banach Lipschitz operator ideal} we mean a subclass $\I_{\Lip}$ of $\Lip_0$ such that for every pointed metric space $X$ and every Banach space $E$, the components
$$
\I_{\Lip}(X,E)=\Lip_0(X,E)\cap \I_{\Lip}
$$
satisfy:
\begin{enumerate}[\upshape (i)]
\item $\I_{\Lip}(X,E)$ is a linear subspace of $\Lip_0(X,E)$.
\item $Id_\mathds{R} \in \I_{\Lip}(\mathds{R},\mathds{R})$.
\item The ideal property: if $g \in \Lip_0(Y,X), f\in \I_{\Lip}(X,E)$ and $S \in \L(E,F)$, then  $S\circ f\circ g \in \I_{\Lip}(Y,F)$.
\end{enumerate}
Also there is a {\it Lipschitz ideal norm} over $\I_{\Lip}$, given by a function $\|{\cdot}\|_{\I_{\Lip}}\colon \I_{\Lip}\rightarrow [0,+\infty)$ that satisfies

\begin{enumerate}[\upshape (i')]
\item For every pointed metric space $X$ and every Banach space $E$, the pair \newline $\big(\I_{\Lip}(X,E);\|{\cdot}\|_{\I_{\Lip}}\big)$ is a Banach space and $\Lip(f)\leq \|f\|_{\I_{\Lip}}$ for all $f \in \I_{\Lip}(X,E)$.
\item $\|{Id_{\mathds R} \colon \mathds R\rightarrow \mathds R}\|_{\I_{\Lip}}=1$
\item If $g \in \Lip_0(Y,X), f\in \I_{\Lip}(X,E)$ and $S \in \L(E,F)$, then \newline $\|S\circ  f\circ  g\|_{\I_{\Lip}} \leq \Lip(g) \|f\|_{\I_{\Lip}} \|S\|$.
\end{enumerate}

This definition was introduced in \cite[Definition~2.1]{ARSPY} and independently in \cite[Definition~2.3]{CPJVVV}, under the name of {\it generic Lipschitz operator Banach ideal} and extends the definition of Banach linear operator ideals (see e.g. \cite{Pie}).
Along the manuscript, with Banach operator ideal we will refer to a Banach {\it linear} operator ideal. To avoid confusion, we will denote $\A$, $\B$ for Banach operator ideals while $\I$, $\J$ stands for Banach Lipschitz operator ideals. With $\A\subset \B$  we mean that for all Banach spaces $E$ and $F$, $\A(E,F)\subset \B(E,F)$, and $\|\cdot\|_{\B}\leq \|\cdot\|_{\A}$. The same applies with Lipschitz operator ideals. In particular, for a Banach operator ideal $\A$, $\A \circ \Lip_0$ and $\Lip_0 \circ \A \circ \Lip_0$ are Banach Lipschitz operator ideals endowed with the norms
$$
 \|f\|_{\A\circ \Lip_0}=\inf\{\|T\|_{\A} \Lip(g)\colon f=T\circ g\}
$$ 
and 
$$\|f\|_{\Lip_0\circ \A\circ \Lip_0}=\inf\{\Lip (g_1)\|T\|_{\A} \Lip(g_2)\colon f=g_2\circ T\circ g_1\},$$ respectively.
 
Given two Banach Lipschitz operator ideals $\I$ and $\J$ (Banach operator ideals $\A$ and $\B$), we say that $\I$ and $\J$ (resp. $\A$ and $\B$) are {\it related} if for every finite pointed metric space $X_0$ and every finite-dimensional Banach space $E_0$, the equality $\I(X_0,E_0)=\J(X_0,E_0)$ holds isometrically (resp. for all finite-dimensional Banach spaces $E_0$ and $F_0$, the equality $\A(E_0,F_0)=\B(E_0,F_0)$ holds isometrically). As it happens in the linear case, in \cite{TV} it is shown that, for a Banach Lipschitz operator ideal $\I$, there exists the {\it biggest} and the {\it smallest} Banach Lipschitz operator ideal related with $\I$, which are called the maximal hull and the minimal kernel of $\I$. This is, there exist unique $\I^{\max}$ and $\I^{\min}$ such that are related with $\I$ and, whenever $\J$ is related with $\I$, then $\I^{\min}\subset \J \subset \I^{\max}$.

All other relevant terminology and preliminaries are given in corresponding sections.

\section{About $\Lip_{0}\circ \A^{\min}\circ \Lip_{0}$}\label{Sec: Minimal}

Our firsts results appeal to a result of Johnson, Maurey and Schechtman \cite[Theorem~1]{JMS}. For our purpose, we decide state it with a slight modification.
\begin{theorem}[Johnson--Maurey--Schechtman]
\label{thm: JMS lemma about differentiability}
Let $E$ be a Banach space, $F$ be a dual Banach space, and $T\in \L(E,F)$. If there exist a Banach space $G$, a Lipschitz function $f\in \Lip_0(E,G)$ Gateaux differentiable at some $x_0 \in E$ and $g\in \Lip_0(G,F)$ such that $T=g\circ f$, then there exists $R\in \L(G,F)$ with $\|R\|\leq \Lip(g)$ such that $T = R\circ Df(x_0)$.
\end{theorem}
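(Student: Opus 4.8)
The plan is to reduce the statement to a claim about one-sided directional derivatives of $g$ at the point $p:=f(x_0)$, exploiting that the composition $T=g\circ f$ is \emph{linear}. Write $D:=Df(x_0)\in\L(E,G)$. The first observation is an exact difference-quotient identity: since $T$ is linear, for every $x\in E$ and every $t\neq 0$,
\[
\frac{g\big(f(x_0+tx)\big)-g\big(f(x_0)\big)}{t}=\frac{T(x_0+tx)-T(x_0)}{t}=T(x),
\]
so the left-hand side is constant in $t$ and equal to $T(x)$. Next I would feed in the Gateaux differentiability of $f$: writing $f(x_0+tx)=p+t\,D(x)+r(t,x)$ with $\|r(t,x)\|/t\to0$, the Lipschitz bound for $g$ gives
\[
\Big\|\tfrac1t\big(g(f(x_0+tx))-g(p)\big)-\tfrac1t\big(g(p+tD(x))-g(p)\big)\Big\|\le \Lip(g)\,\frac{\|r(t,x)\|}{t}\xrightarrow[t\to0]{}0 .
\]
Combining the two displays yields the norm limit $T(x)=\lim_{t\to0}\tfrac1t\big(g(p+tD(x))-g(p)\big)$ for every $x\in E$.

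This already pins down the candidate operator on the range of $D$. Indeed, for $v=D(x)$ set $R_0(v):=T(x)$. The limit formula shows $R_0$ depends only on $v$ (if $D(x_1)=D(x_2)$ the two limits coincide), is linear on $D(E)$ because $D$ and $T$ are, and satisfies $\|R_0(v)\|\le \Lip(g)\,\|v\|$ since each difference quotient $\Phi_t(v):=\tfrac1t\big(g(p+tv)-g(p)\big)$ has norm at most $\Lip(g)\|v\|$. Hence $R_0$ extends to a bounded linear map on $\overline{D(E)}$ with $\|R_0\|\le\Lip(g)$ and $R_0\circ D=T$. This is the point at which the hypothesis that $F=W'$ is a \emph{dual} space should enter: for an arbitrary direction $v\in G$ the quotients $\Phi_t(v)$ form a bounded net in $F$, so by Banach--Alaoglu they admit weak$^*$ cluster points, which is the natural device for producing values of $R$ off the range.

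The hard part will be promoting $R_0$ to a genuine \emph{linear} operator $R\in\L(G,F)$ defined on all of $G$ with $\|R\|\le\Lip(g)$. The naive guess, taking weak$^*$ limits of $\Phi_t(v)$ in each direction, does \emph{not} work: since $g$ is merely Lipschitz, the second differences $\Phi_t(v_1+v_2)-\Phi_t(v_1)-\Phi_t(v_2)$ need not vanish in the limit, so the resulting map is Lipschitz but fails to be additive away from $\overline{D(E)}$. Nor can one invoke an abstract norm-preserving extension theorem, since operators from a subspace into a dual space do not extend isometrically in general (e.g.\ $\ell_2$ is not $1$-injective). What rescues the argument is that the $\Lip(g)$-Lipschitz factor $g$ is defined on \emph{all} of $G$, so a weak$^*$ cluster point of $\{\Phi_t\}$ furnishes a $\Lip(g)$-Lipschitz extension of $R_0$ to $G$; the task is to linearize it without increasing the norm. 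I would carry this out through weak$^*$ compactness at the operator level: the unit ball of $\L(G,F)=\big(G\,\widehat{\otimes}_\pi W\big)'$ is weak$^*$ compact, and I would exhibit a net of operators of norm $\le\Lip(g)$ that agree with $R_0$ on larger and larger finite-dimensional pieces of $\overline{D(E)}$---obtained by differentiating the Lipschitz extension of $g$ along finite-dimensional subspaces (Rademacher) and controlling the error near $\overline{D(E)}$---and then pass to a weak$^*$ cluster point. Checking that such near-linear finite approximations genuinely exist, and that the cluster point still restricts to $R_0$ on the range, is the crux; the remaining verifications are the routine bookkeeping indicated above.
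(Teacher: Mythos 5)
First, a caveat: the paper offers no proof of this statement --- it is imported (up to a slight reformulation) from Johnson--Maurey--Schechtman \cite[Theorem~1]{JMS} --- so there is nothing in the text to compare your route against; your proposal has to stand on its own. Its first two stages do stand: the exact difference-quotient identity, the norm limit $T(x)=\lim_{t\to 0}\tfrac1t\bigl(g(p+tD(x))-g(p)\bigr)$, and the resulting well-defined linear map $R_0$ on $\overline{D(E)}$ with $\|R_0\|\le \Lip(g)$ and $R_0\circ D=T$ are all correct and complete.

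However, the theorem asserts $R\in\L(G,F)$, a linear map of norm at most $\Lip(g)$ on \emph{all} of $G$, and you yourself label the passage from $R_0$ to such an $R$ ``the crux'' and leave it as a sketch; that sketch does not close the gap. (i) Norm-Gateaux (Rademacher-type) differentiability of Lipschitz maps from finite-dimensional spaces into $F$ requires $F$ to have the Radon--Nikod\'ym property, which an arbitrary dual space (e.g.\ $\ell_\infty$) lacks; only weak$^*$ differentiability is available, and only under separability hypotheses on the predual. (ii) More fundamentally, even at a point $z_1$ of differentiability arbitrarily close to $p$, there is no reason for the derivative of $g$ at $z_1$ to restrict to $R_0$ on $D(E)$: derivatives of Lipschitz maps are merely measurable in the base point, so ``controlling the error near $\overline{D(E)}$'' is not bookkeeping --- it is the entire difficulty, and it is exactly where the dual-space hypothesis and the globally defined Lipschitz map $g$ must be exploited, as they are in \cite{JMS}. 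Since the existence of your ``near-linear finite approximations'' is never established, the weak$^*$ cluster-point step has nothing to act on. What you have actually proved is the theorem under the extra assumption that $D(E)$ is dense in $G$, equivalently the weaker conclusion with $R$ defined only on $\overline{D(E)}$; note that this weaker form would not suffice for the later uses of the theorem, where the ideal property is applied to $R\circ Df(x_0)$ and hence $R$ must act on the whole codomain $G$ of $Df(x_0)$. The missing extension step is the substance of the Johnson--Maurey--Schechtman argument and must either be carried out in full or cited.
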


The following corollary will be useful later.
\begin{corollary}\label{Coro: JMS} 
Let $E$ and $F$ be Banach spaces and $T\in \L(E,F)$. If there exist a Banach space $G$, a Lipschitz function $f\in \Lip_0(E,G)$ Gateaux differentiable at some $x_0 \in E$ and $g\in \W \circ \Lip_0(G,F)$ such that $T=g\circ f$, then there exists an operator $R\in \L(G,F)$ with $\|R\|\leq \Lip(g)$ such that $T = R \circ Df(x_0)$.
\end{corollary}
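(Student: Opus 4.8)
The plan is to lift the factorization to the bidual, where the target becomes a dual space and Theorem~\ref{thm: JMS lemma about differentiability} applies, and then to use the weak compactness hypothesis to push the resulting operator back down into $F$. First I would write $g = S\circ h$ with $h\in\Lip_0(G,H)$ and $S\in\W(H,F)$ for some Banach space $H$, so that $T = S\circ h\circ f$.

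Next I would pass to the bidual. Since $F''$ is a dual space (namely $F''=(F')'$) and $J_F$ is an isometry, the map $J_F\circ g$ belongs to $\Lip_0(G,F'')$ with $\Lip(J_F\circ g)=\Lip(g)$, and $J_F\circ T=(J_F\circ g)\circ f$ is a linear operator into the dual space $F''$ whose inner factor $f$ is Gateaux differentiable at $x_0$. Applying Theorem~\ref{thm: JMS lemma about differentiability} to this factorization yields $\widetilde R\in\L(G,F'')$ with $\|\widetilde R\|\leq\Lip(J_F\circ g)=\Lip(g)$ and $J_F\circ T=\widetilde R\circ Df(x_0)$.

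The heart of the argument is to show that $\widetilde R$ actually takes its values in $J_F(F)$. For this I would read off the explicit form of the operator produced by the proof of Theorem~\ref{thm: JMS lemma about differentiability}: for each $y\in G$ the value $\widetilde R(y)$ is a weak$^*$ limit along an ultrafilter $t\to 0$ of the difference quotients $J_F\!\left(\tfrac{g(f(x_0)+ty)-g(f(x_0))}{t}\right)$. Because $g=S\circ h$, the $F$-valued quotients $\tfrac{g(f(x_0)+ty)-g(f(x_0))}{t}$ equal $S$ applied to the quotients $\tfrac{h(f(x_0)+ty)-h(f(x_0))}{t}$, whose norms do not exceed $\Lip(h)\,\|y\|$; since $S$ is weakly compact, it carries this bounded set into a weakly compact subset $K_y$ of $F$. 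As $J_F$ is continuous from the weak topology of $F$ to the weak$^*$ topology of $F''$, the set $J_F(K_y)$ is weak$^*$-compact, hence weak$^*$-closed in $F''$; the quotients belong to $J_F(K_y)$, so their weak$^*$ limit $\widetilde R(y)$ lies in $J_F(K_y)\subseteq J_F(F)$. Thus $\widetilde R(G)\subseteq J_F(F)$, and setting $R:=J_F^{-1}\circ\widetilde R\in\L(G,F)$ gives $\|R\|=\|\widetilde R\|\leq\Lip(g)$; applying $J_F^{-1}$ to the identity $J_F\circ T=\widetilde R\circ Df(x_0)$ yields $T=R\circ Df(x_0)$, as desired.

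The main obstacle is precisely this range control. Taken purely as an existence statement, Theorem~\ref{thm: JMS lemma about differentiability} pins down $\widetilde R$ only on $\overline{Df(x_0)(E)}$, where it is automatically $F$-valued because $\widetilde R\circ Df(x_0)=J_F\circ T$, and leaves its values elsewhere on $G$ unconstrained; a bounded operator from the subspace $\overline{Df(x_0)(E)}$ into the arbitrary space $F$ need not extend to $G$ with control of the norm, so one cannot finish by a Hahn--Banach-type extension. This is what forces the use of the weak$^*$-limit description of $\widetilde R$ together with the two facts that a weakly compact operator sends bounded sets to relatively weakly compact sets and that on a weakly compact subset of $F$ the weak$^*$ topology of $F''$ restricts to the weak topology of $F$, so the limits defining $\widetilde R(y)$ cannot escape $J_F(F)$. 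Alternatively, one may first factor $S=b\circ a$ through a reflexive space via Davis--Figiel--Johnson--Pe\l czy\'nski and exploit reflexivity of the intermediate space to locate the difference quotients in a weakly compact set.
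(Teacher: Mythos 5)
Your overall strategy (lift to $F''$, apply Theorem~\ref{thm: JMS lemma about differentiability} there, then use weak compactness to pull the range back into $J_F(F)$) is reasonable, and you have correctly identified the real difficulty: the theorem, used as a black box, only controls $\widetilde R$ on $\overline{Df(x_0)(E)}$. But the way you close that gap rests on a mischaracterization of the operator produced by the Johnson--Maurey--Schechtman proof. The weak$^*$ limit along an ultrafilter of the difference quotients $t^{-1}\bigl(g(f(x_0)+ty)-g(f(x_0))\bigr)$ is only a \emph{Lipschitz} map $v$ with $v\circ Df(x_0)=J_F\circ T$; it is not additive in $y$ (that would amount to weak$^*$ Gateaux differentiability of $g$ at $f(x_0)$, which is neither assumed nor automatic). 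The linear operator of the theorem is obtained from $v$ by further averaging, via invariant means on the additive group, of translated differences $v(\cdot+w)-v(w)$. So your range-control argument, as written, applies to the wrong object. It can be repaired: each difference $v(a+y)-v(a)$ is again a weak$^*$ limit of quotients $t^{-1}\bigl(g(f(x_0)+t(a+y))-g(f(x_0)+ta)\bigr)$, all of which lie in $S\bigl(\Lip(h)\|y\|B_H\bigr)$, and the invariant-mean averages stay in the weak$^*$-compact convex set $J_F\bigl(\overline{\rm co}\,S(\Lip(h)\|y\|B_H)\bigr)\subset J_F(F)$ (Krein--\v{S}mulian). But none of this is in your proof, and it requires opening up and tracking every stage of the JMS construction.

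The paper avoids proof-mining entirely and follows the route you only sketch in your last sentence, with one extra step you omit and cannot skip. It writes $g=S\circ g_1$ with $S\in\W(G_1,F)$, uses the Lima--Nygaard--Oja isometric version of Davis--Figiel--Johnson--Pe\l czy\'nski to take $G_1$ reflexive, and then passes to the canonical factorization $S=\overline S\circ q$ through $G_1/\ker S$ so that $\overline S$ is \emph{injective}. Injectivity is what lets one conclude that $q\circ g_1\circ f$ is itself linear (since $\overline S\circ(q\circ g_1\circ f)=T$ is linear), and only then can Theorem~\ref{thm: JMS lemma about differentiability} be applied to $q\circ g_1\circ f$ with the reflexive (hence dual) target $G_1/\ker S$; composing the resulting operator with $\overline S$ gives $R$ with the right norm bound. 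Without the quotient step, $a\circ h\circc f$ in your sketch need not be linear and the theorem does not apply. As submitted, then, your argument has a genuine gap in its main route and an incomplete alternative.
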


\begin{proof}
Take the factorization of $T\in \L(E,F)$ as in the statement. Since $g\in \W \circ \Lip_0(G,F)$, applying \cite[Proposition~2.2]{JVSMVV14} there exist a Banach space $G_1$, a Lipschitz operator $g_1\in \Lip_0(G,G_1)$ and an operator $S\in \W(G_1,F)$ such that $g=S\circ g_1$ with $\|S\|\Lip(g_{1})= \Lip(g)$. Using the isometric version of the Davis, Figiel, Johnson and Pelcz\'ynski factorization lemma due to Lima, Nygaard and Oja \cite{LNO}, we may suppose that $G_1$ is reflexive. Consider the canonical factorization  $S=\overline S \circ q$ where $q\colon G_1 \rightarrow G_1/ \ker S$ is the quotient map. Note that $G_1/ \ker S$ is a reflexive Banach space and $\overline S\in \L(G_1/ \ker S,F)$ is injective. We have the factorization $T=\overline S \circ q \circ g_1 \circ f$. Since $T$ is linear and $\overline S$ is linear and injective, then $q\circ g_1\circ f\colon E\rightarrow G_1/ \ker  S$ is a linear operator. As  $G_1/ \ker S$ is reflexive, by the above theorem, there exists a linear operator $\widetilde R\in \L(G, G_1/ \ker S)$ with $\|\widetilde R\|\leq \Lip (q\circ g_1)\leq \Lip(g_1)$ such that $q\circ g_1\circ f=\widetilde R\circ Df(x_0)$. Finally, we obtain that $T=\overline S \circ \widetilde R \circ Df(x_0)$ and 
$$
\|\overline S\circ \widetilde R \|\leq \|\overline S\|\|\widetilde R\|\leq\|S\|\Lip(g_1)\leq\Lip(g). 
$$
The proof concludes by taking $R=\overline S\circ \widetilde R$.
\end{proof}

Recall that given a Banach operator ideal $\A$, its minimal kernel can be obtained via the composition ideal $\A^{\min}=\OF\circ \A \circ\OF$. Since Johnson \cite{Joh} proved that every approximable linear operator factors through a separable and reflexive Banach space via two approximable linear operators, we may consider a factorization of an operator in $\A^{\min}$ through reflexive and separable Banach spaces. Now we have
\begin{prop}\label{prop: a(min)lip0 fact}

Let $E$ and $F$ be Banach spaces and $\A$ a Banach operator ideal. If $E$ is separable, then for every Lipschitz map $f\in \A^{\min} \circ \Lip_0 (E,F)$, there exists $x_0$ in $E$ such that $f$ is Gateaux differentiable at $x_0$ and $Df(x_0) \in \A^{\min}(E,F)$. Moreover, for $\ep>0$ we may choose $x_0$ such that $\|Df(x_0)\|_{\A^{\min}}\leq (1+\ep)\|f\|_{\A^{\min}\circ \Lip_0}$.
\end{prop}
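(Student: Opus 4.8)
The plan is to reduce the statement to the classical almost-everywhere Gateaux differentiability of Lipschitz maps from a separable Banach space into a space with the Radon--Nikod\'ym property, after first arranging a factorization of $f$ that passes through a reflexive (hence RNP) space.

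First I would unwind the definition of the composition ideal: since $f\in \A^{\min}\circ \Lip_0(E,F)$, fix a Banach space $G$, a map $g\in \Lip_0(E,G)$ and a linear operator $T\in \A^{\min}(G,F)$ with $f=T\circ g$, chosen so that $\|T\|_{\A^{\min}}\Lip(g)$ is within a factor $(1+\delta)$ of $\|f\|_{\A^{\min}\circ\Lip_0}$. Next, using $\A^{\min}=\OF\circ\A\circ\OF$, I would take a near-optimal representation $T=b\circ S\circ a$ with $a,b$ approximable, $S\in\A$, and $\|b\|\,\|S\|_{\A}\,\|a\|\le(1+\delta)\|T\|_{\A^{\min}}$; then, applying Johnson's theorem \cite{Joh} to the leftmost factor $a$ in its isometric form \cite{LNO} (the same tool already used in the proof of Corollary~\ref{Coro: JMS}), I would factor $a=\alpha_2\circ\alpha_1$ through a separable reflexive space $R$ with $\|\alpha_1\|\,\|\alpha_2\|=\|a\|$. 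Setting $T_1:=\alpha_1\in\L(G,R)$ and $T_2:=b\circ S\circ\alpha_2$ gives $T=T_2\circ T_1$ with $R$ separable and reflexive, $T_2\in\A^{\min}(R,F)$, and $\|T_2\|_{\A^{\min}}\|T_1\|\le (1+\delta)\|T\|_{\A^{\min}}$. Writing $h:=T_1\circ g$, we obtain $f=T_2\circ h$ with $h\in\Lip_0(E,R)$ and $\Lip(h)\le \|T_1\|\Lip(g)$.

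Now I would apply the differentiability theorem: since $E$ is separable and $R$, being reflexive, has the Radon--Nikod\'ym property, the Lipschitz map $h\colon E\to R$ is Gateaux differentiable outside an Aronszajn (Gauss) null set \cite{BL}; in particular its set of points of Gateaux differentiability is nonempty, so I choose such an $x_0$. Because $T_2$ is a bounded linear operator, $f=T_2\circ h$ is then Gateaux differentiable at $x_0$ with $Df(x_0)=T_2\circ Dh(x_0)$, where $Dh(x_0)\in\L(E,R)$. The ideal property of $\A^{\min}$ yields at once $Df(x_0)=T_2\circ Dh(x_0)\in\A^{\min}(E,F)$, which settles the qualitative part.

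For the quantitative estimate I would chain the bounds, using the ideal inequality, the remark that $\|Dh(x_0)\|\le\Lip(h)$, and the two norm choices above:
\[
\|Df(x_0)\|_{\A^{\min}}\le \|T_2\|_{\A^{\min}}\|Dh(x_0)\|\le \|T_2\|_{\A^{\min}}\|T_1\|\Lip(g)\le (1+\delta)\|T\|_{\A^{\min}}\Lip(g)\le(1+\delta)^2\|f\|_{\A^{\min}\circ\Lip_0},
\]
and I would take $\delta$ small enough that $(1+\delta)^2\le 1+\ep$. The main obstacle I anticipate is the second step: producing the reflexive factorization of $T$ with the $\A^{\min}$-norm essentially preserved, i.e.\ controlling $\|T_2\|_{\A^{\min}}\|T_1\|$ by $(1+\delta)\|T\|_{\A^{\min}}$ while keeping the intermediate space separable and reflexive. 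This is precisely where Johnson's factorization in its isometric (Lima--Nygaard--Oja) form is needed, and some care is required because the $\A^{\min}$-norm is itself an infimum over $\OF\circ\A\circ\OF$ representations. By contrast, the differentiability input, though deep, enters only as a black box from \cite{BL}.
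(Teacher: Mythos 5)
Your proposal is correct and follows essentially the same route as the paper: the paper likewise uses $\A^{\min}=\OF\circ\A\circ\OF$ together with Johnson's factorization of approximable operators to rewrite $f=T\circ g$ with the intermediate space separable and reflexive, then invokes \cite[Theorem~6.42]{BL} to differentiate the Lipschitz factor and composes with the linear one. The only cosmetic difference is that you spell out the reflexive re-factorization explicitly (the paper absorbs it into the choice of $G$ from the outset), and your citation of \cite{LNO} for the isometric form is not really needed here since a $(1+\delta)$-multiplicative factorization of the approximable factor suffices.
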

\begin{proof}
Fix $\ep>0$ and take a factorization $f=T\circ g$ through some separable and reflexive Banach space $G$, where $g\in \Lip_0(E,G)$ with $\Lip(g)=1$ and $T\in \A^{\min}(G,F)$ with $\|T\|_{\A^{\min}}\leq (1+\ep) \|f\|_{\A^{\min}\circ \Lip_0}$. Then, since $g$ is a Lipschitz function from a separable space into a reflexive space (hence with the Radon-Nikod\'ym property), by \cite[Theorem~6.42]{BL} $g$ is Gateaux differentiable at some $x_0$. Then, $f$ is Gateaux differentiable in $x_0$ with  $Df(x_0)=T\circ Dg(x_0)$, and the conclusion follows.
\end{proof}

Combining Theorem~\ref{thm: JMS lemma about differentiability} with Proposition~\ref{prop: a(min)lip0 fact} we have
\begin{theorem}
\label{Thm: linear restriction of lipaminlip is contained in amin reg}
Let $E$ and $F$ be Banach spaces and $\A$ a Banach operator ideal. If $E$ is separable, then $(\Lip_0\circ \A^{\min}\circ \Lip_0)\cap \L(E,F)\subset (\A^{\min})^{\emph{reg}}(E,F)$.
\end{theorem}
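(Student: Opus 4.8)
The plan is to start from an operator $T\in(\Lip_0\circ\A^{\min}\circ\Lip_0)\cap\L(E,F)$ and show that $J_F\circ T$ lands in $\A^{\min}(E,F'')$, since by the definition of the regular hull this is exactly the assertion $T\in(\A^{\min})^{\reg}(E,F)$. By hypothesis we may fix a factorization $T=g_2\circ R\circ g_1$ with $g_1\in\Lip_0(E,G_1)$, $R\in\A^{\min}(G_1,G_2)$ and $g_2\in\Lip_0(G_2,F)$. The guiding idea is that $F''$ is a dual Banach space, so composing with the canonical inclusion $J_F$ puts us precisely in the situation where the Johnson--Maurey--Schechtman theorem applies; this is why the regular hull (rather than $\A^{\min}$ itself) is the natural target.

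First I would group the factorization as $J_F\circ T=(J_F\circ g_2)\circ(R\circ g_1)$ and analyze the inner map $f:=R\circ g_1$. Since $g_1$ is Lipschitz and vanishes at $0$ while $R\in\A^{\min}(G_1,G_2)$ is linear, $f$ presents itself as an element of the composition ideal $\A^{\min}\circ\Lip_0(E,G_2)$. Because $E$ is separable, Proposition~\ref{prop: a(min)lip0 fact} then yields a point $x_0\in E$ at which $f$ is Gateaux differentiable, with $Df(x_0)\in\A^{\min}(E,G_2)$.

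Next I would apply Theorem~\ref{thm: JMS lemma about differentiability} to the linear operator $J_F\circ T\in\L(E,F'')$, using the outer Lipschitz map $h:=J_F\circ g_2\in\Lip_0(G_2,F'')$ (which vanishes at $0$ since $J_F$ is linear) together with the inner map $f$, which is Gateaux differentiable at $x_0$. As $F''$ is a dual space, the theorem produces a linear operator $S\in\L(G_2,F'')$ with $\|S\|\le\Lip(h)$ such that $J_F\circ T=S\circ Df(x_0)$. Invoking the ideal property of $\A^{\min}$ with $Df(x_0)\in\A^{\min}(E,G_2)$ and $S\in\L(G_2,F'')$ then gives $J_F\circ T=S\circ Df(x_0)\in\A^{\min}(E,F'')$, that is, $T\in(\A^{\min})^{\reg}(E,F)$, as desired.

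The only genuinely delicate point is the passage to the bidual: the Johnson--Maurey--Schechtman theorem requires the codomain to be a dual space, which $F$ need not be, and this is exactly what forces the conclusion to be membership in the regular hull $(\A^{\min})^{\reg}$ rather than in $\A^{\min}(E,F)$ directly. Everything else is bookkeeping: verifying that $R\circ g_1$ genuinely exhibits $f$ as an element of $\A^{\min}\circ\Lip_0$ so that Proposition~\ref{prop: a(min)lip0 fact} applies, and that the basepoint conditions hold so that both cited results are legitimately invoked.
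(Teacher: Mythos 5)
Your argument is correct and follows essentially the same route as the paper: regroup the factorization as $J_F\circ T=(J_F\circ g_2)\circ(R\circ g_1)$, apply Proposition~\ref{prop: a(min)lip0 fact} to the inner map to obtain a point of Gateaux differentiability with derivative in $\A^{\min}$, and then invoke Theorem~\ref{thm: JMS lemma about differentiability} with the dual space $F''$ as codomain. The only detail the paper adds is the bookkeeping of norms (choosing the factorization within $(1+\ep)$ of the ideal norm) so that the inclusion is norm-controlled, which is needed for the isometric statements in the subsequent corollaries.
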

\begin{proof}
Take a linear operator $T\in \Lip_0\circ \A^{\min}\circ \Lip_0(E,F)$ and fix $\ep>0$. We may take a factorization of $T=f\circ g$, where $f\in \Lip_0(G,F)$ and $g\in \A^{\min}\circ \Lip_0(E,G)$ and,  thanks to Proposition~\ref{prop: a(min)lip0 fact}, we may consider it Gateaux differentiable at some $x_0\in E$ with $Dg(x_0) \in \A^{\min}(E,G)$. Moreover, we may choose such factorization with $\Lip(f)=1$ and $\|Dg(x_0)\|_{\A^{\min}} \leq (1+\ep)\|T\|_{\Lip_0\circ \A^{\min}\circ \Lip_0}$. Now, applying Theorem~\ref{thm: JMS lemma about differentiability} to the operator $J_F\circ T=J_F\circ f\circ g$, we obtain that $J_F\circ T\in \A^{\min}(E,F'')$ and $\|J_F\circ T\|_{\A^{\min}}\leq (1+\ep)\|Dg(x_0)\|_{\A^{\min}} \leq (1+\ep)^2\|T\|_{\Lip_0\circ \A^{\min}\circ \Lip_0}$, and the conclusion follows.
\end{proof}
In the cases when $\A^{\min}=\A^{\min \ \reg}$ we obtain an extension of a minimal ideal.

\begin{corollary}\label{Coro:1raext}
Let $\mathcal{A}$ be a Banach operator ideal, $E$ and $F$ be Banach spaces, with $E$ separable. If $F$ is a dual space, $E'$ has the approximation property, or $\mathcal{A}^{\min}$ is regular, then
\begin{align*}
(\Lip_{0}\circ \A^{\min}\circ \Lip_{0})(E,F)\cap \L(E,F)=\A^{\min}(E,F)\text{ isometrically}
\end{align*}
\end{corollary}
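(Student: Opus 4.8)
The plan is to reduce everything to a single regularity statement: that under each of the three hypotheses one has $(\A^{\min})^{\reg}(E,F)=\A^{\min}(E,F)$ isometrically. First I would dispose of the inclusion ``$\supseteq$'': every $T\in\A^{\min}(E,F)$ is linear and factors trivially as $T=Id_{F}\circ T\circ Id_{E}$ with both identity maps Lipschitz of constant one, so $T\in(\Lip_{0}\circ \A^{\min}\circ \Lip_{0})(E,F)\cap\L(E,F)$ with $\|T\|_{\Lip_{0}\circ \A^{\min}\circ \Lip_{0}}\leq\|T\|_{\A^{\min}}$. For ``$\subseteq$'' I would invoke Theorem~\ref{Thm: linear restriction of lipaminlip is contained in amin reg} (this is exactly where the separability of $E$ is used), which gives the inclusion into $(\A^{\min})^{\reg}(E,F)$; reading the norm estimate off its proof yields $\|T\|_{(\A^{\min})^{\reg}}=\|J_{F}\circ T\|_{\A^{\min}}\leq\|T\|_{\Lip_{0}\circ \A^{\min}\circ \Lip_{0}}$. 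Granting the regularity identity, chaining these bounds gives $\|T\|_{\A^{\min}}=\|T\|_{(\A^{\min})^{\reg}}\leq\|T\|_{\Lip_{0}\circ \A^{\min}\circ \Lip_{0}}\leq\|T\|_{\A^{\min}}$, so all three norms coincide and the isometric equality drops out. It therefore remains to prove the regularity identity in each of the three cases.

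Two cases are immediate. If $\A^{\min}$ is regular there is nothing to do, since then $(\A^{\min})^{\reg}=\A^{\min}$ by definition. If $F$ is a dual space, say $F=Z'$, then $P:=(J_{Z})'\colon F''\to F$ satisfies $P\circ J_{F}=(J_{Z})'\circ J_{Z'}=Id_{F}$ and $\|P\|=1$, so $F$ is $1$-complemented in $F''$. Then for $T\in(\A^{\min})^{\reg}(E,F)$ I would write $T=P\circ(J_{F}\circ T)$ and use $J_{F}\circ T\in\A^{\min}(E,F'')$ together with the ideal property to get $T\in\A^{\min}(E,F)$ with $\|T\|_{\A^{\min}}\leq\|P\|\,\|J_{F}\circ T\|_{\A^{\min}}=\|T\|_{(\A^{\min})^{\reg}}$; the opposite inequality holds for every ideal because $\|J_{F}\|=1$, giving the isometry.

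The case in which $E'$ has the approximation property is the one I expect to be the main obstacle. Here I would use the representation of the minimal kernel through the tensor norm $\alpha$ associated with $\A$: because $E'$ has the approximation property, the canonical map identifies $\A^{\min}(E,G)$ isometrically with $E'\widehat{\otimes}_{\alpha}G$ for every Banach space $G$, and in particular for $G=F$ and $G=F''$; under these identifications $T\mapsto J_{F}\circ T$ becomes $Id_{E'}\otimes J_{F}$. Everything then hinges on showing that the induced map $E'\widehat{\otimes}_{\alpha}F\to E'\widehat{\otimes}_{\alpha}F''$ is an isometric embedding whose image consists exactly of those tensors whose associated operator takes values in $F$; granting this, an element $J_{F}\circ T\in\A^{\min}(E,F'')$ coming from a genuine $T\in\L(E,F)$ must lie in the image, and hence $T\in\A^{\min}(E,F)$ with the matching norm. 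This compatibility with the metric injection $J_{F}\colon F\hookrightarrow F''$ is the delicate point; I would prove it by the usual density-and-local-reflexivity scheme, approximating $J_{F}\circ T$ in $\|\cdot\|_{\A^{\min}}$ by finite-rank operators $E\to F''$ and then using the principle of local reflexivity to move their finite-dimensional ranges back into $F$ with distortion $1+\ep$, so as to obtain finite-rank operators $E\to F$ approximating $T$ in $\|\cdot\|_{\A^{\min}}$; letting $\ep\to0$ finishes the case. Alternatively one may cite the accessibility and regularity properties of minimal kernels under the approximation property directly from Defant--Floret~\cite{DF}. Once the regularity identity is in hand in all three cases, the reduction of the first paragraph completes the proof.
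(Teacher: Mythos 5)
Your proposal is correct and follows essentially the same route as the paper: both reduce the statement to the regularity identity $(\A^{\min})^{\reg}(E,F)=\A^{\min}(E,F)$ via Theorem~\ref{Thm: linear restriction of lipaminlip is contained in amin reg} for one inclusion and the trivial factorization for the other, and then settle that identity case by case. The only difference is cosmetic: where you give a self-contained $1$-complementation argument for the dual-space case and sketch a local-reflexivity argument for the approximation-property case, the paper simply cites \cite[Corollary 22.8.2]{DF} for both, which is also the fallback you offer.
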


\begin{proof}
Following \cite[Corollary 22.8.2]{DF}, if $F$ is a dual space or $E'$ has the approximation property, then $\A^{\min}(E,F)=\A^{\min \ \reg}(E,F)$ and the proof is complete.
\end{proof}

The following result gives another extension of $\A^{\min}$.

\begin{theorem}\label{Theo: WlipAlipmin}
Let $E$ and $F$ be Banach spaces and $\A$ a Banach operator ideal. If $E$ is separable, then $(\W\circ \Lip_{0}\circ \A^{\min}\circ \Lip_{0})\cap \L(E,F)=  \A^{\min}(E,F)$.
\end{theorem}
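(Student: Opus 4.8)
The plan is to prove the two inclusions separately; the substance of each is already packaged in the earlier results, so the proof is mostly a matter of assembling them in the right order and keeping track of the norms.

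For the inclusion $\A^{\min}(E,F)\subseteq(\W\circ\Lip_0\circ\A^{\min}\circ\Lip_0)(E,F)$, which does not require $E$ separable, I would start from a linear operator $T\in\A^{\min}(E,F)$ and use the identity $\A^{\min}=\OF\circ\A\circ\OF$ to factor $T=A_2\circ R\circ A_1$ with $A_1\in\OF(E,G_1)$, $R\in\A(G_1,G_2)$ and $A_2\in\OF(G_2,F)$. The naive move of taking $A_2$ as the outer weakly compact factor fails, because the remaining operator $R\circ A_1$ has an approximable factor only on the right of $R$ and need not belong to $\A^{\min}$. To fix this I would split $A_2$ by Johnson's theorem \cite{Joh} as $A_2=\mu_2\circ\mu_1$ with $\mu_1\in\OF(G_2,W_1)$ and $\mu_2\in\OF(W_1,F)$. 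Then $\mu_2\in\OF\subseteq\W$ serves as the outer weakly compact operator, while $\mu_1\circ R\circ A_1\in\OF\circ\A\circ\OF=\A^{\min}(E,W_1)$ now has approximable factors on \emph{both} sides of $R$. Sandwiching this single $\A^{\min}$ operator between $Id_E$ and $Id_{W_1}$ places it in $(\Lip_0\circ\A^{\min}\circ\Lip_0)(E,W_1)$, so $T=\mu_2\circ(\mu_1\circ R\circ A_1)$ exhibits $T$ as an element of $\W\circ\Lip_0\circ\A^{\min}\circ\Lip_0$; tracking the two factorizations with an $\ep$ gives $\|T\|_{\W\circ\Lip_0\circ\A^{\min}\circ\Lip_0}\leq\|T\|_{\A^{\min}}$.

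For the reverse inclusion I would take a linear $T\in(\W\circ\Lip_0\circ\A^{\min}\circ\Lip_0)(E,F)$ with $E$ separable and write $T=W\circ g_2\circ S\circ g_1$ with $g_1\in\Lip_0(E,G_1)$, $S\in\A^{\min}(G_1,G_2)$, $g_2\in\Lip_0(G_2,G_3)$ and $W\in\W(G_3,F)$. Setting $f:=S\circ g_1\in\A^{\min}\circ\Lip_0(E,G_2)$ and $g:=W\circ g_2\in\W\circ\Lip_0(G_2,F)$ gives $T=g\circ f$. Since $E$ is separable, Proposition~\ref{prop: a(min)lip0 fact} furnishes a point $x_0$ at which $f$ is Gateaux differentiable with $Df(x_0)\in\A^{\min}(E,G_2)$ and $\|Df(x_0)\|_{\A^{\min}}\leq(1+\ep)\|f\|_{\A^{\min}\circ\Lip_0}$. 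Now Corollary~\ref{Coro: JMS}, applied to $T=g\circ f$ with $g\in\W\circ\Lip_0(G_2,F)$, yields a linear operator $R\in\L(G_2,F)$ with $\|R\|\leq\Lip(g)$ such that $T=R\circ Df(x_0)$. The ideal property then gives $T=R\circ Df(x_0)\in\A^{\min}(E,F)$, and taking the infimum over all factorizations produces $\|T\|_{\A^{\min}}\leq\|T\|_{\W\circ\Lip_0\circ\A^{\min}\circ\Lip_0}$, so that together with the first inclusion the equality holds, in fact isometrically.

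I expect the only genuine subtlety to be the first inclusion: one must split an approximable factor so that a true $\A^{\min}$ operator, with approximable maps on both ends of the $\A$-part, is left in the middle after peeling off a weakly compact map. The reverse inclusion is a clean combination of Proposition~\ref{prop: a(min)lip0 fact} with Corollary~\ref{Coro: JMS}; the decisive gain over Theorem~\ref{Thm: linear restriction of lipaminlip is contained in amin reg} is that the outer weakly compact operator lets us invoke the weakly compact version of the Johnson–Maurey–Schechtman lemma, so no hypothesis that $F$ be a dual space (or that $E'$ have the approximation property) is needed and we land directly in $\A^{\min}$ rather than only in its regular hull.
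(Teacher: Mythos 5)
Your proof is correct and follows essentially the same route as the paper: the reverse inclusion is exactly the paper's combination of Proposition~\ref{prop: a(min)lip0 fact} with Corollary~\ref{Coro: JMS}, and for the forward inclusion your splitting of the outer approximable factor is just an explicit form of the identity $\A^{\min}=\OF\circ\A^{\min}$ that the paper invokes. No gaps.
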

\begin{proof}

First note that by \cite[Proposition~3.1]{TV} we have that $\A^{\min}\subset (\A^{\min}\circ \Lip_0)\cap \L$. Also, we have that $\A^{\min}=\OF \circ \A^{\min}\subset \W\circ \A^{\min}$. Thus, $\A^{\min}\circ\Lip_0\subset \left(\W\circ \Lip_{0}\circ \A^{\min}\circ \Lip_{0}\right)$.

Now, take $T\in (\W\circ \Lip_{0}\circ \A^{\min}\circ \Lip_{0})\cap \L(E,F)$ and let see that $T\in \A^{\min}(E,F)$. For $\ep>0$, there exist a Banach space $G$, and Lipschitz maps $f\in \A^{\min}\circ \Lip_0(E,G)$ and $g\in \W\circ \Lip_0(G,F)$ such that $T=g\circ f$ and $\|f\|_{\A^{\min}\circ \Lip_0} \Lip(g) \leq (1+\ep)\|T\|_{\W\circ \Lip_{0}\circ \A^{\min}\circ \Lip_{0}}$. Combining Corollary~\ref{Coro: JMS} with Proposition~\ref{prop: a(min)lip0 fact}, there exist $x_0 \in E$ such that $f$ is Gateaux differentiable in $x_0$, $Df(x_0)\in \A^{\min}(E,G)$ with $\|Df(x_0)\|_{\A^{\min}}\leq (1+\ep)\|f\|_{\A^{\min} \circ \Lip_0}$ and $R\in \L(E,G)$ with $\|R\|\leq \Lip(g)$ such that $T=R \circ Df(x_0)$. We conclude that $T\in \A^{\min}(E,F)$ and $\|T\|_{\A^{\min}}\leq (1+\ep)\|f\|_{\A^{\min} \circ \Lip_0}\Lip(g)\leq (1+\ep)^2\|T\|_{\W\circ \Lip_{0}\circ \A^{\min}\circ \Lip_{0}}$.
\end{proof}

The results obtained can be rewritten in terms of factorizations of linear operators.

\begin{theorem}\label{thm: Factorization_Min}
Let $E$ and $F$ be  Banach spaces, $\A$ a minimal Banach operator ideal and $T\in \L(E,F)$. Suppose that there is a factorization of $T$ as 
$$
\xymatrix{
E\ar[r]^T \ar[d]_{f_1}& F\\
G_1 \ar[r]^R & G_2 \ar[u]_{f_2}
}
$$
where $G_1$ and $G_2$ are Banach spaces, $f_1\in \Lip_{0}(E,G_{1})$, $f_{2}\in \Lip_{0}(G_{2},F)$ and $R\in \A(G_1,G_2)$.
If $E$ is separable and
\begin{itemize}
\item $f_2 \in \W\circ \Lip_0(G_2,F)$,
\end{itemize}
or
\begin{itemize}
\item $F$ is a dual space, $E'$ has the approximation property, or $\A$ is regular,
\end{itemize}
then $T \in \A(E,F)$.
\end{theorem}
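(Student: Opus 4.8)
The plan is to recognize this theorem as a direct reformulation of the extension results already established, the key point being that $\A$ is minimal, so $\A=\A^{\min}$. With this identity, the hypothesis $R\in\A(G_1,G_2)=\A^{\min}(G_1,G_2)$ together with $f_1\in\Lip_0(E,G_1)$ and $f_2\in\Lip_0(G_2,F)$ says precisely that the linear operator $T$, viewed as a Lipschitz map, is a witness for membership in $(\Lip_0\circ\A^{\min}\circ\Lip_0)(E,F)\cap\L(E,F)$. Once this is observed, the two sets of hypotheses peel off into the two earlier theorems, and no new analytic work is needed.

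For the second list of hypotheses ($F$ a dual space, $E'$ with the approximation property, or $\A$ regular), I would simply invoke Corollary~\ref{Coro:1raext}. Since $E$ is separable and $\A=\A^{\min}$, all the hypotheses of that corollary are met; note that ``$\A$ regular'' is the same condition as ``$\A^{\min}$ regular'' here. The corollary then yields $(\Lip_0\circ\A^{\min}\circ\Lip_0)(E,F)\cap\L(E,F)=\A^{\min}(E,F)=\A(E,F)$, so $T\in\A(E,F)$.

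For the first hypothesis, $f_2\in\W\circ\Lip_0(G_2,F)$, I would regroup the factorization so as to land in the composition ideal governed by Theorem~\ref{Theo: WlipAlipmin}. Since $R\in\A^{\min}(G_1,G_2)$ and $f_1\in\Lip_0(E,G_1)$, the ideal property gives $R\circ f_1\in(\A^{\min}\circ\Lip_0)(E,G_2)$. Writing $f_2=S\circ g_1$ with $S\in\W$ and $g_1\in\Lip_0$ (the very definition of $\W\circ\Lip_0$), I obtain $T=S\circ g_1\circ R\circ f_1$, which exhibits $T$ as an element of $(\W\circ\Lip_0\circ\A^{\min}\circ\Lip_0)(E,F)$. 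As $T$ is linear and $E$ is separable, Theorem~\ref{Theo: WlipAlipmin} forces $T\in\A^{\min}(E,F)=\A(E,F)$.

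The only step that requires genuine care is the bookkeeping: confirming that the reassembled compositions truly land in the correct composition ideals and that the identity $\A=\A^{\min}$ is applied consistently at each stage. Beyond this there is no real obstacle, since all the analytic content, namely the extraction of a point of Gateaux differentiability via Proposition~\ref{prop: a(min)lip0 fact} and the Johnson--Maurey--Schechtman linearization of Theorem~\ref{thm: JMS lemma about differentiability} and Corollary~\ref{Coro: JMS}, has already been absorbed into Corollary~\ref{Coro:1raext} and Theorem~\ref{Theo: WlipAlipmin}.
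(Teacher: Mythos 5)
Your proposal is correct and follows exactly the route the paper intends: the paper states this theorem without a separate proof, presenting it as a direct reformulation of Corollary~\ref{Coro:1raext} and Theorem~\ref{Theo: WlipAlipmin} via the identity $\A=\A^{\min}$, which is precisely the regrouping you carry out.
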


To obtain another extension of $\A^{\min}$, we will use the following factorization result about the minimal kernel of a Banach Lipschitz operator ideal.

\begin{prop}\label{Prop: Sobre Imin}
Let $\I$ be a Banach Lipschitz operator ideal. Then $\I^{\min}=\OF \circ \I^{\min}$.
 
This is, for every pointed metric space $X$, every Banach space $E$ and every $f\in \I^{\min}(X,E)$ there exist a Banach space $G$, a Lipschitz function $g\in \I^{\min}(X,G)$ and a linear operator $T\in \OF(G,E)$ such that $f=T\circ g$. Moreover, $\|f\|_{\I^{\min}}=\inf\{\|g\|_{\I^{\min}} \|T\|\}$, where the infimum is taken over all the factorizations of $f$. 
\end{prop}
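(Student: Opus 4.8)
The plan is to recognize $\OF\circ\I^{\min}$ as a Banach Lipschitz operator ideal and then to identify it with $\I^{\min}$ by squeezing it against $\I^{\min}$ via the universal property of the minimal kernel supplied by \cite{TV}. First I would recall that, for a Banach operator ideal $\A$ and a Banach Lipschitz operator ideal $\I$, the composition $\A\circ\I$ equipped with $\|f\|_{\A\circ\I}=\inf\{\|T\|_{\A}\|g\|_{\I}\colon f=T\circ g\}$ is again a Banach Lipschitz operator ideal, exactly as for $\A\circ\Lip_0$ and $\Lip_0\circ\A\circ\Lip_0$ in the present paper. Applying this with $\A=\OF$ (carrying the operator norm) and $\I=\I^{\min}$ shows that $\OF\circ\I^{\min}$ is a Banach Lipschitz operator ideal whose norm is precisely the infimum appearing in the ``Moreover'' part. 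It therefore suffices to prove the isometric identity $\I^{\min}=\OF\circ\I^{\min}$.

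The inclusion $\OF\circ\I^{\min}\subset\I^{\min}$ is immediate from the ideal property of $\I^{\min}$. Indeed, if $f=T\circ g$ with $g\in\I^{\min}(X,G)$ and $T\in\OF(G,E)\subset\L(G,E)$, then left composition by the bounded operator $T$ keeps $f$ inside $\I^{\min}(X,E)$ and gives $\|f\|_{\I^{\min}}\le\|T\|\,\|g\|_{\I^{\min}}$; taking the infimum over all such factorizations yields $\|f\|_{\I^{\min}}\le\|f\|_{\OF\circ\I^{\min}}$, which is the content of this inclusion.

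For the reverse inclusion I would appeal to minimality. The key observation is that $\OF\circ\I^{\min}$ is \emph{related} to $\I$; it is enough to check that it is related to $\I^{\min}$, that is, that the two ideals agree isometrically on any finite pointed metric space $X_0$ and finite-dimensional Banach space $E_0$. On such components the identity $Id_{E_0}$ is a finite-rank, hence approximable, operator with $\|Id_{E_0}\|=1$, so writing $f=Id_{E_0}\circ f$ for $f\in\I^{\min}(X_0,E_0)$ exhibits $\I^{\min}(X_0,E_0)\subset(\OF\circ\I^{\min})(X_0,E_0)$ with $\|f\|_{\OF\circ\I^{\min}}\le\|f\|_{\I^{\min}}$; combined with the previous paragraph this gives isometric equality on finite components. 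Since being related is an equivalence relation and $\I^{\min}$ is related to $\I$, the ideal $\OF\circ\I^{\min}$ is related to $\I$, so the universal property of the minimal kernel forces $\I^{\min}\subset\OF\circ\I^{\min}$ (with $\|f\|_{\OF\circ\I^{\min}}\le\|f\|_{\I^{\min}}$). The two inclusions give $\I^{\min}=\OF\circ\I^{\min}$ isometrically, and the norm identity in the ``Moreover'' is then exactly the definition of $\|\cdot\|_{\OF\circ\I^{\min}}$.

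I expect the only point requiring genuine care to be the very first step, namely verifying that $\OF\circ\I^{\min}$ is a bona fide Banach Lipschitz operator ideal (the ideal axioms and, above all, completeness of the composition norm), since everything afterward is a clean application of the minimal-kernel universal property. This completeness is standard for composition ideals and can be quoted from the construction in \cite{ARSPY}, and is already implicitly in use for the ideals $\A\circ\Lip_0$ and $\Lip_0\circ\A\circ\Lip_0$ considered throughout the paper.
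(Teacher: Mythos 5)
Your proposal is correct and follows essentially the same route as the paper: the inclusion $\OF\circ\I^{\min}\subset\I^{\min}$ via the ideal property, the factorization $f=id_{E_0}\circ f$ on finite components to get isometric agreement there, and the universal property of the minimal kernel to conclude. The paper is merely terser (it leaves the appeal to minimality and the verification that $\OF\circ\I^{\min}$ is a Banach Lipschitz operator ideal implicit), so no substantive difference.
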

\begin{proof}
It is clear that $\OF\circ \I^{\min}\subset \I^{\min}$. To prove the equality, it suffices to show that $\OF\circ \I^{\min}(X_{0},E_{0})=\I^{\min}(X_{0},E_{0})$ isometrically for every finite pointed metric space $X_{0}$ and every finite-dimensional normed space $E_{0}$. Given $f\in\Lip_0(X_{0},E_{0})$, since $E_0$ is finite-dimensional, the identity map of $E_0$ is a finite-rank operator. Then, we can write $f=id_{E_{0}}\circ f$, which gives a factorization of $f$ with $\|f\|_{\OF\circ \I^{\min}}\leq \|id_{E_{0}}\|\|f\|_{\I^{\min}}=\|f\|_{\I^{\min}}$. The proof is finished.
\end{proof}

\begin{remark} For a Banach operator ideal $\A$, we have that $\A^{\min}=\OF \circ \A \circ \OF$, (see e.g. \cite[8.6.1]{Pie}). As far as we know, there is no formula for a minimal Lipschitz ideal analogous to this one. Note that, as a consequence of the above, we have the inclusion
\begin{equation}\label{eq1}
(\Lip_0\circ \A^{\min}\circ \Lip_0)^{\min}=\OF \circ (\Lip_0\circ \A^{\min}\circ \Lip_0)^{\min}\subset \OF \circ (\Lip_0\circ \A^{\min}\circ \Lip_0).
\end{equation}
Since $\A^{\min}=\OF \circ \A\circ \OF$, from $\eqref{eq1}$ we obtain that
$$
(\Lip_0\circ \A^{\min}\circ \Lip_0)^{\min}\subset (\OF \circ \Lip_{0}\circ \OF) \circ \A\circ (\OF\circ\Lip_{0})\subset (\OF \circ \Lip_{0}) \circ  \A\circ (\OF\circ\Lip_{0}),
$$ 
where $\OF \circ \Lip_{0}$ is the Lipschitz ideal of approximable operators. In fact, $\OF \circ \Lip_{0}$ is the {\it smallest} Lipschitz ideal which contains the approximable linear operators. 
\end{remark}

Another extension of $\A^{\min}$ is described in the following theorem.

\begin{theorem}\label{Theo: lipAlipmin}
Let $E$ and $F$ be Banach spaces and $\A$ a Banach operator ideal. If $E$ is separable, then $(\Lip_{0}\circ \A^{\min}\circ \Lip_{0})^{\min}\cap \L(E,F)=  \A^{\min}(E,F)$.
\end{theorem}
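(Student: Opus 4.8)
The plan is to establish the two (isometric) inclusions separately; throughout write $\I:=\Lip_0\circ \A^{\min}\circ \Lip_0$ and $\J:=\I^{\min}$.

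First I would prove $\J\cap \L(E,F)\subseteq \A^{\min}(E,F)$. The inclusion \eqref{eq1} gives $\J\subseteq \OF\circ \I$, and since $\OF\subseteq \W$ (approximable operators are weakly compact, with the same operator norm) this upgrades to $\J\subseteq \W\circ \I$. As $\W\circ \I=\W\circ \Lip_0\circ \A^{\min}\circ \Lip_0$, intersecting with $\L(E,F)$ and applying Theorem~\ref{Theo: WlipAlipmin} --- this is the step that uses the separability of $E$ --- yields $\J\cap \L(E,F)\subseteq \A^{\min}(E,F)$ together with the estimate $\|T\|_{\A^{\min}}\leq \|T\|_{\W\circ \Lip_0\circ \A^{\min}\circ \Lip_0}\leq \|T\|_{\J}$, the last inequality tracking the two ideal inclusions.

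For the reverse inclusion $\A^{\min}(E,F)\subseteq \J(E,F)$ I would argue by approximation with finite-rank operators. Every rank-one operator $x'\otimes y$ factors as $(t\mapsto ty)\circ \mathrm{id}_{\mathds R}\circ x'$ with $\mathrm{id}_{\mathds R}\in \J(\mathds R,\mathds R)$ by the unit axiom for the minimal kernel, so the ideal property gives $x'\otimes y\in \J(E,F)$ and hence $\F(E,F)\subseteq \J(E,F)$. Now take $T\in \A^{\min}(E,F)$ and use $\A^{\min}=\OF\circ \A\circ \OF$ to write $T=b\circ S\circ a$ with $a\in \OF(E,G_1)$, $S\in \A(G_1,G_2)$, $b\in \OF(G_2,F)$; choosing finite-rank $a_n\to a$ and $b_n\to b$ in operator norm, the finite-rank operators $T_n:=b_n\circ S\circ a_n$ converge to $T$ in the $\A^{\min}$-norm (by the ideal inequality for $\OF\circ \A\circ \OF$). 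If the map $R\mapsto \|R\|_{\J}$ is dominated by $\|R\|_{\A^{\min}}$ on $\F(E,F)$, then $\{T_n\}$ is Cauchy in $\J$; completeness of $\J$ together with the domination $\Lip(\cdot)\leq \|\cdot\|_{\J}$ then identifies its limit with $T$, giving $T\in \J(E,F)$ with $\|T\|_{\J}\leq \|T\|_{\A^{\min}}$ and completing the isometry.

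The heart of the matter is thus the estimate $\|R\|_{\J}\leq \|R\|_{\A^{\min}}$ for finite-rank $R$; note that the crude bound coming from the rank-one decomposition only produces the nuclear norm, which \emph{dominates} $\|R\|_{\A^{\min}}$ and is therefore useless here. I would instead factor $R=\beta\circ R_0\circ \alpha$ through finite-dimensional Banach spaces and bound $\|R\|_{\J}\leq \|\beta\|\,\|R_0\|_{\J}\,\Lip(\alpha)$, then estimate $\|R_0\|_{\J}\leq \|R_0\|_{\I}\leq \|R_0\|_{\A^{\min}}$, the last inequality being the trivial factorization $R_0=\mathrm{id}\circ R_0\circ \mathrm{id}$. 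The nontrivial input is the middle inequality $\|R_0\|_{\J}\leq \|R_0\|_{\I}$, i.e. that the minimal kernel $\J$ coincides isometrically with $\I$ on finite-dimensional Banach spaces. This is exactly the main obstacle: relatedness (and hence the minimal kernel) is defined through finite pointed \emph{metric} spaces, whereas here the domain of $R_0$ is a finite-dimensional \emph{Banach} space, an infinite metric space; reconciling the two --- presumably by approximating such a space by finite nets, controlling the Lipschitz-ideal norm under the passage to the limit, and exploiting the reflexivity/Radon--Nikod\'ym property that a finite-dimensional space enjoys --- is the delicate point on which the whole argument rests, and is consistent with the absence (noted in the Remark) of any closed formula for the minimal kernel of a Lipschitz ideal.
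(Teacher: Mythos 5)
Your first half --- the inclusion $(\Lip_0\circ\A^{\min}\circ\Lip_0)^{\min}\cap\L(E,F)\subseteq\A^{\min}(E,F)$ via Proposition~\ref{Prop: Sobre Imin}, $\OF\subset\W$ and Theorem~\ref{Theo: WlipAlipmin} --- is exactly the paper's argument and is fine. The reverse inclusion, however, has a genuine gap, and you have correctly located it yourself: everything hinges on the estimate $\|R_0\|_{\J}\leq\|R_0\|_{\I}$ for linear $R_0$ between finite-dimensional Banach spaces, i.e.\ on the isometric equality $\J(M,N)=\I(M,N)$ there. Since $\J=\I^{\min}\subset\I$ only gives the \emph{opposite} inequality $\|\cdot\|_{\I}\leq\|\cdot\|_{\J}$, and relatedness guarantees coincidence only when the domain is a finite pointed \emph{metric} space (a finite-dimensional Banach space is not one), nothing in the paper --- neither Proposition~\ref{Prop: Sobre Imin} nor the Remark following it --- supplies this step, and your sketch of "approximating by finite nets" is not an argument. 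As written, the approximation scheme by finite-rank operators therefore does not close.

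The paper sidesteps this entirely. For the lower inclusion it uses that $\A^{\min}\circ\Lip_0$ is a \emph{minimal} Lipschitz operator ideal \cite[Theorem~4.8]{TV}; since $\A^{\min}\circ\Lip_0\subset\Lip_0\circ\A^{\min}\circ\Lip_0$, passing to minimal kernels gives
$$
\A^{\min}\circ\Lip_0=(\A^{\min}\circ\Lip_0)^{\min}\subset(\Lip_0\circ\A^{\min}\circ\Lip_0)^{\min}=\J,
$$
and then $\A^{\min}(E,F)\subset(\A^{\min}\circ\Lip_0)\cap\L(E,F)$ by \cite[Proposition~3.1]{TV}. Combined with your (correct) upper bound this yields the isometric equality at once, with no need to discuss finite-rank operators or the behaviour of $\J$ on finite-dimensional domains. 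If you want to salvage your route, you would have to prove the coincidence of $\I$ and $\I^{\min}$ on finite-dimensional Banach spaces separately, which is essentially as hard as the theorem itself; the inclusion $\A^{\min}\circ\Lip_0\subset\J$ is the efficient replacement.
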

\begin{proof}
On the one hand, since $\A^{\min}\circ \Lip_0 \subset \Lip_0\circ \A^{\min}\circ \Lip_0$ and by \cite[Theorem~4.8]{TV} $\A^{\min}\circ \Lip_0$ is a minimal Lipschitz operator ideal, we obtain the inclusion
\begin{align*}
\A^{\min}\circ \Lip_{0}=(\A^{\min}\circ \Lip_{0})^{{\min}}\subset (\Lip_{0}\circ \A^{\min}\circ \Lip_{0})^{{\min}}.
\end{align*}
On the other hand, note that by Proposition \ref{Prop: Sobre Imin},
$$
(\Lip_{0}\circ \A^{\min}\circ \Lip_{0})^{\min}\subset \OF\circ \Lip_{0}\circ \A^{\min}\circ \Lip_{0}\subset \W\circ \Lip_{0}\circ \A^{\min}\circ \Lip_{0}.
$$
Thus $\A^{\min}\circ \Lip_{0}\subset \W\circ \Lip_{0}\circ \A^{\min}\circ \Lip_{0}$, and the result follows by applying \cite[Proposition~3.1]{TV} and Theorem~\ref{Theo: WlipAlipmin}.
\end{proof}

Combining Theorem~\ref{Theo: lipAlipmin} and \cite[Proposition~3.2]{TV}, when $E$ is a separable Banach space, for every Banach space $F$, $(\Lip_0\circ \A^{\min}\circ \Lip_0)^{\min}(E,F)$ and $\A^{\min}\circ \Lip_0(E,F)$ extend $\A^{\min}(E,F)$ and, for example, in the case that $F$ is a dual space, by Corollary~\ref{Coro:1raext}, also $\Lip_0\circ \A^{\min}\circ \Lip_0(E,F)$ extends it. To finish this section, we give a discussion about in which cases these three ideals coincide or not. For a first approach, recall that a Lipschitz function belongs to the composition ideal $\A \circ \Lip_0$ if and only if its linearization belongs to $\A$, \cite[Proposition~3.2]{ARSPY}. Also, in \cite[Definition~1.6]{CPJVVV} the concept of {\it strong} Banach Lipschitz operator ideal is introduced. A Banach Lipschitz operator ideal $\I$ has the {\it strong ideal property} if whenever $X$ and $Y$ are pointed metric spaces, $E$ and $F$ Banach spaces, $f \in \I(Y,E)$, $g\in \Lip_0(X,Y)$ and $h\in \Lip_0(E,F)$, we have that $h\circ f\circ g \in \I(X,F)$ with $\|h\circ f\circ g\|_{\I} \leq \Lip(h) \|f\|_\I \Lip(g)$. In particular, $\Lip_0 \circ \A \circ \Lip_0$ is a Banach Lipschitz operator ideal with the strong ideal property.

\begin{prop}\label{prop:strong prope}
Let $\I$ be a Banach Lipschitz operator ideal. If $\I$ has the strong ideal property, then $\I$ is not minimal.
\end{prop}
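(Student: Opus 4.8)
### Plan of Proof

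The plan is to argue by contradiction: I would assume that $\I$ is both minimal and has the strong ideal property, and derive a structural rigidity that forces $\I$ to contain all of $\Lip_0$, which cannot happen for a nontrivial proper Lipschitz ideal. The strong ideal property is the key extra hypothesis, since it allows post-composition with an \emph{arbitrary} Lipschitz map $h \in \Lip_0(E,F)$ (not merely a linear operator $S \in \L(E,F)$), and this is exactly the kind of flexibility that a minimal ideal should \emph{not} possess.

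First I would unpack what minimality gives us. By definition, $\I^{\min}$ is related to $\I$ and is the smallest ideal with this property; so $\I$ minimal means $\I = \I^{\min}$, and in particular $\I$ coincides isometrically with any related ideal on all pairs $(X_0, E_0)$ with $X_0$ finite and $E_0$ finite-dimensional. The natural strategy is to fix a simple test map $\mathrm{id}_\R \in \I(\R,\R)$ (available by axiom (ii)) and then use the strong ideal property to ``transport'' it by nonlinear Lipschitz maps to produce a Lipschitz function that ought to lie outside $\I$ if $\I$ is genuinely smaller than $\Lip_0$. Concretely, for any pointed metric space $X$ and any $f \in \Lip_0(X,E)$, I would try to realize $f$ (or a suitable piece of it) as $h \circ (\mathrm{id}_\R) \circ g$ for appropriate $g \in \Lip_0(X,\R)$ and $h \in \Lip_0(\R,E)$; the strong ideal property then forces $f \in \I$ with $\|f\|_\I \le \Lip(h)\,\Lip(g)$, collapsing $\I$ onto $\Lip_0$.

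The cleanest route is probably to exploit the strong ideal property together with Proposition~\ref{Prop: Sobre Imin}, which says $\I^{\min} = \OF \circ \I^{\min}$. If $\I = \I^{\min}$, then every $f \in \I(X,E)$ factors as $f = T \circ g$ with $T \in \OF(G,E)$ approximable and $g \in \I(X,G)$. The idea is that the strong ideal property lets me \emph{pre}-compose $g$ with nonlinear maps and \emph{post}-compose with nonlinear maps freely, so that the factorization through an approximable operator becomes incompatible with the freedom to absorb arbitrary Lipschitz maps on the left. In particular, I expect to show that the strong ideal property forces $\I$ to be closed under composition with nonlinear retractions or embeddings that a minimal (essentially ``approximable-type'') ideal cannot tolerate, giving the contradiction.

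The main obstacle will be pinning down the right nonlinear test map $h$ that separates $\I$ from $\Lip_0$: I need an $h \in \Lip_0(G, E)$ whose composition with an approximable linear operator is \emph{not} approximable-in-the-Lipschitz-sense, so that $h \circ T \circ g$ cannot lie in $\OF \circ \I$ while the strong ideal property insists it does lie in $\I = \I^{\min} = \OF \circ \I^{\min}$. Making this precise requires a concrete choice—likely a Lipschitz map into an infinite-dimensional space whose range cannot be captured by finite-rank approximation—and verifying the norm estimate in the strong ideal property is compatible with the failure of minimality. Once such an $h$ is exhibited, the contradiction with $\I = \I^{\min}$ closes the argument.
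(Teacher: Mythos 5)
There is a genuine gap: your write-up is a plan that explicitly defers the one step that actually constitutes the proof, namely the construction of a concrete witness, and your first proposed route is unworkable. You cannot realize a general $f\in\Lip_0(X,E)$ as $h\circ \mathrm{id}_{\mathds R}\circ g$ --- most Lipschitz maps into a Banach space do not factor through $\mathds R$ --- so the idea of ``collapsing $\I$ onto $\Lip_0$'' this way fails at the outset. The target itself is also too ambitious: the strong ideal property does not force $\I=\Lip_0$ (for instance $\Lip_0\circ\OF\circ\Lip_0$ has the strong ideal property without being all of $\Lip_0$), so no argument along those lines can succeed. What is needed, and what you acknowledge as ``the main obstacle'' without resolving it, is a single explicit Lipschitz map that the strong ideal property forces into $\I$ but that minimality excludes.

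The paper's proof supplies exactly this witness. Take $E\neq\{0\}$ finite-dimensional, so $id_E\in\I(E,E)$ because it has finite rank. The strong ideal property allows post-composition with the \emph{nonlinear} Dirac map, giving $\delta_E=\delta_E\circ id_E\in\I(E,\Ae(E))$. If $\I$ were minimal, then $\I\subset\OF\circ\Lip_0$ (by \cite[Proposition~4.5]{TV}; this is the correct input here, rather than Proposition~\ref{Prop: Sobre Imin}), so the linearization $L_{\delta_E}=id_{\Ae(E)}$ would be approximable, hence compact --- impossible since $\Ae(E)$ is infinite-dimensional. Your second sub-strategy (strong ideal property combined with factorization through approximable operators) points in the right direction, but without exhibiting $\delta_E$, or some map with an equally non-compact linearization, the argument is not complete.
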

\begin{proof}
Let $E\neq \{0\}$ be a finite-dimensional normed space. Since $id_{E}\in \I(E,E)$ (because it is a finite-rank operator) and $\I$ has the strong ideal property, we have that $\delta_{E}=\delta_{E}\circ id_{E}\in \I(E, \Ae(E))$. If $\I$ were minimal, as a consequence of \cite[Proposition~4.5]{TV}, we would obtain that $\I\subset  \OF\circ \Lip_{0}$, which would imply that the linearization of $\delta_{E}$, which is the identity map of $\Ae(E)$, is an approximable (and hence compact) operator. But this cannot happen since $\Ae(E)$ is an infinite-dimensional space. 
\end{proof}
As a consequence, we have the following
\begin{corollary}\label{Coro: Nominimal}
Let $\A$ be a Banach operator ideal. The Lipschitz operator ideal $\Lip_{0}\circ \A^{\min}\circ \Lip_{0}$ is not minimal.
\end{corollary}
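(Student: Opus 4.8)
The plan is to deduce Corollary~\ref{Coro: Nominimal} directly from Proposition~\ref{prop:strong prope}. The proposition asserts that any Banach Lipschitz operator ideal with the strong ideal property fails to be minimal, so the only thing that must be verified is that $\Lip_{0}\circ \A^{\min}\circ \Lip_{0}$ actually has the strong ideal property. But this was already observed in the discussion preceding Proposition~\ref{prop:strong prope}, where it is stated that $\Lip_0 \circ \A \circ \Lip_0$ is a Banach Lipschitz operator ideal with the strong ideal property for any Banach operator ideal $\A$. Applying this with $\A$ replaced by $\A^{\min}$ gives that $\Lip_{0}\circ \A^{\min}\circ \Lip_{0}$ has the strong ideal property.

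Concretely, the one-line argument I would write is the following. First I would recall why the strong ideal property holds here: if $f = g_2 \circ R \circ g_1 \in \Lip_{0}\circ \A^{\min}\circ \Lip_{0}(Y,E)$ and we are given outer Lipschitz maps $g\in \Lip_0(X,Y)$ and $h\in \Lip_0(E,F)$, then $h\circ f \circ g = (h\circ g_2)\circ R \circ (g_1 \circ g)$ is again of the form outer-Lipschitz $\circ$ $\A^{\min}$-operator $\circ$ outer-Lipschitz, hence lies in $\Lip_{0}\circ \A^{\min}\circ \Lip_{0}(X,F)$; the norm estimate $\|h\circ f\circ g\| \leq \Lip(h)\|f\|\Lip(g)$ follows immediately from the definition of the norm $\|\cdot\|_{\Lip_0\circ \A^{\min}\circ \Lip_0}$ as an infimum over such factorizations, since composing with $h$ and $g$ on the outside multiplies the relevant factors by $\Lip(h)$ and $\Lip(g)$. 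This is the essential point, and it is what distinguishes $\Lip_{0}\circ \A^{\min}\circ \Lip_{0}$ from the composition ideal $\A^{\min}\circ \Lip_{0}$: the extra outer Lipschitz factor $\Lip_0$ is precisely what lets an arbitrary Lipschitz map be absorbed on the range side.

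Once the strong ideal property is in hand, Proposition~\ref{prop:strong prope} applies verbatim and yields that $\Lip_{0}\circ \A^{\min}\circ \Lip_{0}$ is not minimal, completing the proof. In fact, there is essentially no obstacle here: the corollary is a formal consequence of the proposition together with the structural observation about the strong ideal property. If I wanted a fully self-contained argument rather than invoking the proposition as a black box, I could instead reproduce its proof in this special case: take a nonzero finite-dimensional $E$, use that $id_E$ is finite-rank so $id_E \in \A^{\min}(E,E)$ and hence $\delta_E = \delta_E \circ id_E \circ id_E \in \Lip_{0}\circ \A^{\min}\circ \Lip_{0}(E,\Ae(E))$, and observe that minimality would force, via \cite[Proposition~4.5]{TV}, the linearization of $\delta_E$ — namely the identity on the infinite-dimensional space $\Ae(E)$ — to be approximable, a contradiction. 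Either route works, but the cleanest write-up is simply to cite the strong ideal property of $\Lip_{0}\circ \A^{\min}\circ \Lip_{0}$ and invoke Proposition~\ref{prop:strong prope}.
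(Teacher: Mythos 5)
Your proposal is correct and follows exactly the paper's route: the corollary is deduced from Proposition~\ref{prop:strong prope} together with the observation (already made in the text preceding that proposition) that $\Lip_{0}\circ \A\circ \Lip_{0}$ has the strong ideal property for any Banach operator ideal $\A$, applied here with $\A^{\min}$ in place of $\A$. The verification of the strong ideal property via the factorization $h\circ f\circ g=(h\circ g_2)\circ R\circ(g_1\circ g)$ and the self-contained fallback using $\delta_E$ are both consistent with the paper's argument.
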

In particular, $\Lip_{0}\circ \A^{\min}\circ \Lip_{0}$ does not coincide with $(\Lip_{0}\circ \A^{\min}\circ \Lip_{0})^{\min}$ or $\A^{\min} \circ \Lip_0$, since $\A^{\min} \circ \Lip_0$ is a minimal Lipschitz operator ideal \cite[Theorem~4.8]{TV}. Almost with the same proof of Proposition~\ref{prop:strong prope} we obtain the following.

\begin{prop}\label{Prop: No composition}
Let $\A$ be a Banach operator ideal. The Lipschitz operator ideal $\Lip_{0}\circ \A^{\min}\circ \Lip_{0}$ is not of composition type
\end{prop}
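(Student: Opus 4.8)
The plan is to argue by contradiction, mimicking the proof of Proposition~\ref{prop:strong prope} but replacing the containment $\I\subset \OF\circ \Lip_0$ available for minimal ideals by the linearization characterization of composition ideals. Write $\I:=\Lip_{0}\circ \A^{\min}\circ \Lip_{0}$ and suppose, towards a contradiction, that $\I=\B\circ \Lip_0$ for some Banach operator ideal $\B$.

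First I would fix a nonzero finite-dimensional normed space $E$ and exhibit the Dirac map $\delta_E\colon E\to \Ae(E)$ as an element of $\I(E,\Ae(E))$. This is immediate from the factorization $\delta_E=\delta_E\circ id_E\circ id_E$ together with $id_E\in \A^{\min}(E,E)$ (a finite-rank operator belongs to every ideal), which is precisely a factorization of the form $g_2\circ T\circ g_1$; equivalently, it follows from the strong ideal property of $\I$ applied to $id_E\in \I(E,E)$, exactly as in Proposition~\ref{prop:strong prope}.

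Next, since $\I=\B\circ \Lip_0$ is assumed to be of composition type, the characterization \cite[Proposition~3.2]{ARSPY} gives that $\delta_E\in \I$ if and only if its linearization $L_{\delta_E}$ belongs to $\B$. As $L_{\delta_E}$ fixes every $\delta_E(x)$ and these span a dense subspace of $\Ae(E)$, we have $L_{\delta_E}=id_{\Ae(E)}$, so $id_{\Ae(E)}\in \B(\Ae(E),\Ae(E))$. Because linear operators are Lipschitz, $\B\subset \B\circ \Lip_0=\I$, and therefore $id_{\Ae(E)}\in \I\cap \L(\Ae(E),\Ae(E))$.

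Finally I would invoke Theorem~\ref{Thm: linear restriction of lipaminlip is contained in amin reg}: as $E$ is finite-dimensional, $\Ae(E)$ is separable, whence $\I\cap \L(\Ae(E),\Ae(E))\subset (\A^{\min})^{\reg}(\Ae(E),\Ae(E))$. Thus $id_{\Ae(E)}\in (\A^{\min})^{\reg}$, which by the definition of the regular hull means $J_{\Ae(E)}=J_{\Ae(E)}\circ id_{\Ae(E)}\in \A^{\min}(\Ae(E),\Ae(E)'')$. Since $\A^{\min}=\OF\circ \A\circ \OF\subset \OF$, the embedding $J_{\Ae(E)}$ would be approximable, hence compact; this is impossible because $J_{\Ae(E)}$ is an isometry and $\Ae(E)$ is infinite-dimensional. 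I expect the only delicate points to be the clean identification $L_{\delta_E}=id_{\Ae(E)}$ and checking that the hypotheses of Theorem~\ref{Thm: linear restriction of lipaminlip is contained in amin reg} apply to the pair $(\Ae(E),\Ae(E))$; the separability of $\Ae(E)$ is exactly what makes the argument go through with no extra assumptions on $\A$, and the non-compactness of the canonical embedding supplies the final contradiction.
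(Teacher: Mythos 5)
Your argument is correct and follows essentially the same route as the paper's proof: place $\delta_E$ in $\Lip_0\circ\A^{\min}\circ\Lip_0(E,\Ae(E))$, use the composition-type hypothesis to force its linearization $id_{\Ae(E)}$ into the linear part of the ideal, apply Theorem~\ref{Thm: linear restriction of lipaminlip is contained in amin reg} to land in $(\A^{\min})^{\reg}$, and derive a contradiction with the non-compactness of the identity of the infinite-dimensional space $\Ae(E)$. The only cosmetic difference is that you reach $id_{\Ae(E)}\in\I\cap\L$ via the linearization characterization of \cite[Proposition~3.2]{ARSPY} together with $\B\subset\B\circ\Lip_0$, whereas the paper invokes \cite[Proposition~3.16]{TV}; these are equivalent for this purpose.
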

\begin{proof}
If $\Lip_{0}\circ \A^{\min}\circ \Lip_{0}$ were of composition type, then by \cite[Proposition 3.16]{TV}, we would have
\begin{align*}
((\Lip_{0}\circ \A^{\min}\circ \Lip_{0})\cap \L)\circ \Lip_{0}=\Lip_{0}\circ \A^{\min}\circ \Lip_{0}.
\end{align*}
In particular, for $E\neq \{0\}$ a finite-dimensional Banach space, since $\delta_E \in \Lip_{0}\circ \A^{\min}\circ \Lip_{0}(E,\Ae(E))$, we get that its linearization, $id_{\Ae(E)}$, belongs to $(\Lip_{0}\circ \A^{\min}\circ \Lip_{0})\cap \L (\Ae(E),\Ae(E))$. By Theorem~\ref{Thm: linear restriction of lipaminlip is contained in amin reg}, we have that $id_{\Ae(E)}\in (\A^{\min})^{\reg}(\Ae(E),\Ae(E))$ and, in particular, $id_{\Ae(E)}$ is a compact operator. But this cannot happen since $\Ae(E)$ is an infinite-dimensional space. 
\end{proof}

Finally, in order to see in which cases $(\Lip_0\circ \A^{\min}\circ \Lip_0)^{\min}$ and $\A^{\min}\circ \Lip_0$ coincide or not, we have the following
\begin{prop}
Let $\A$ be a Banach operator ideal. The following statements are equivalent:
\begin{itemize}
\item[$(i)$] $(\Lip_{0}\circ \A^{\min}\circ \Lip_{0})^{\min}$ is of composition type.
\item[$(ii)$] $(\Lip_{0}\circ \A^{\min}\circ \Lip_{0})^{\min}=\A^{\min}\circ \Lip_{0}$.
\item[$(iii)$] $(\Lip_{0}\circ \A^{\min}\circ \Lip_{0})^{\max}=\A^{\max}\circ \Lip_{0}$.
\end{itemize}
\end{prop}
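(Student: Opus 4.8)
The plan is to establish $(ii)\Rightarrow(i)$, $(i)\Rightarrow(ii)$, and then the equivalence $(ii)\Leftrightarrow(iii)$ by passing between the minimal kernel and the maximal hull. For brevity set $\I:=\Lip_{0}\circ \A^{\min}\circ \Lip_{0}$, and recall from the uniqueness statement for minimal kernels and maximal hulls that ideals in the same related class share the same $\I^{\min}$ and $\I^{\max}$, and that two related minimal (resp.\ maximal) ideals coincide.

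The implication $(ii)\Rightarrow(i)$ is immediate, since $\A^{\min}\circ\Lip_{0}$ is of composition type by definition. For $(i)\Rightarrow(ii)$ I would argue on finite objects. Assuming $\I^{\min}$ is of composition type, the characterization used in the proof of Proposition~\ref{Prop: No composition} (namely \cite[Proposition~3.16]{TV}) yields $\I^{\min}=(\I^{\min}\cap\L)\circ\Lip_{0}$ isometrically. Fix now a finite pointed metric space $X_{0}$ and a finite-dimensional $E_{0}$; then $\Ae(X_{0})$ is finite-dimensional, hence separable. Combining \cite[Proposition~3.2]{ARSPY} (the norm of a map in a composition ideal equals the ideal norm of its linearization) with Theorem~\ref{Theo: lipAlipmin} applied to the separable domain $\Ae(X_{0})$, I obtain for every $f\in\Lip_{0}(X_{0},E_{0})$ that $\|f\|_{\I^{\min}}=\|L_{f}\|_{\I^{\min}\cap\L}=\|L_{f}\|_{\A^{\min}}=\|f\|_{\A^{\min}\circ\Lip_{0}}$. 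Thus $\I^{\min}$ and $\A^{\min}\circ\Lip_{0}$ are related, and since both are minimal Lipschitz ideals (the latter by \cite[Theorem~4.8]{TV}), they coincide, which is $(ii)$.

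For $(ii)\Leftrightarrow(iii)$ I would first record that $\A^{\min}\circ\Lip_{0}$ and $\A^{\max}\circ\Lip_{0}$ are related, because on any finite object both norms equal $\|L_{f}\|_{\A}$ (as $\A^{\min}$ and $\A^{\max}$ agree isometrically in finite dimensions). The direction $(iii)\Rightarrow(ii)$ is then clean: taking minimal kernels and using that $\I^{\max}$ is related to $\I$ together with \cite[Theorem~4.8]{TV}, one gets $\I^{\min}=(\I^{\max})^{\min}=(\A^{\max}\circ\Lip_{0})^{\min}=(\A^{\min}\circ\Lip_{0})^{\min}=\A^{\min}\circ\Lip_{0}$. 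For $(ii)\Rightarrow(iii)$, taking maximal hulls and using that $\I^{\min}$ is related to $\I$ gives $\I^{\max}=(\I^{\min})^{\max}=(\A^{\min}\circ\Lip_{0})^{\max}$, so the whole matter reduces to identifying $(\A^{\min}\circ\Lip_{0})^{\max}$ with $\A^{\max}\circ\Lip_{0}$.

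The main obstacle is exactly this last identity $(\A^{\min}\circ\Lip_{0})^{\max}=\A^{\max}\circ\Lip_{0}$, which is equivalent to $\A^{\max}\circ\Lip_{0}$ being a maximal Lipschitz ideal: granting it, maximality and relatedness force $(\A^{\min}\circ\Lip_{0})^{\max}=(\A^{\max}\circ\Lip_{0})^{\max}=\A^{\max}\circ\Lip_{0}$. To prove it I would appeal to the Lipschitz Kürsten--Pietsch characterization (Theorem~\ref{Theo:_Reg_and_Ultrast}) and check that $\A^{\max}\circ\Lip_{0}$ is regular and ultrastable. Regularity is the easy half: since $L_{J_{E}\circ f}=J_{E}\circ L_{f}$ and $\A^{\max}$ is regular, $J_{E}\circ f\in\A^{\max}\circ\Lip_{0}$ iff $L_{f}\in(\A^{\max})^{\reg}=\A^{\max}$ iff $f\in\A^{\max}\circ\Lip_{0}$. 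Ultrastability is the technical heart, where the ultrastability of $\A^{\max}$ must be transported through the linearization and the relevant ultraproducts of free spaces; this is the step I expect to be genuinely delicate, and it is precisely where the ultraproduct techniques announced for Section~\ref{Sec: Maximal} would be used.
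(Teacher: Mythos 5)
Your proof is correct and follows essentially the same route as the paper: $(ii)\Rightarrow(i)$ is immediate, $(i)\Rightarrow(ii)$ via \cite[Proposition~3.16]{TV} and Theorem~\ref{Theo: lipAlipmin} applied on finite objects (where $\Ae(X_0)$ is finite-dimensional, hence separable), and $(ii)\Leftrightarrow(iii)$ by passing between minimal kernels and maximal hulls of related ideals, which the paper simply delegates to \cite[Proposition 4.1, Proposition 4.4 and Theorem 4.8]{TV}. The only step you flag as genuinely delicate --- the maximality of $\A^{\max}\circ\Lip_0$ --- is already available: it is \cite[Proposition~4.1]{TV}, and the paper itself notes that Proposition~\ref{prop:ultrastable a derecha} combined with Theorem~\ref{Theo:_Reg_and_Ultrast} gives exactly the regular-plus-ultrastable argument you sketch.
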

\begin{proof}
It is clear that $(ii)$ implies $(i)$ and the equivalence between $(ii)$ and $(iii)$ follows from \cite[Proposition  4.1, Proposition 4.4 and Theorem 4.8]{TV}. We will show that $(i)$ implies $(ii)$. First note that, since both ideals are minimal, is enough to show the equality $(\Lip_{0}\circ \A^{\min}\circ \Lip_{0})^{\min}(X_0,E_0)=\A^{\min}\circ \Lip_{0}(X_0,E_0)$ for every finite metric space $X_0$ and every finite-dimensional space $E_0$. Suppose that  $(\Lip_{0}\circ \A^{\min}\circ \Lip_{0})^{\min}$ is of composition type, then by \cite[Proposition 3.16]{TV}, we have
$$
\left((\Lip_{0}\circ \A^{\min}\circ \Lip_{0})^{\min}\cap \L\right) \circ \Lip_{0}=(\Lip_{0}\circ \A^{\min}\circ \Lip_{0})^{\min}.
$$
By Proposition~\ref{Theo: lipAlipmin} we have that for all finite metric space $X_0$ and every finite-dimensional normed space $E_0$, $(\Lip_{0}\circ \A^{\min}\circ \Lip_{0})^{\min}\cap \L (\Ae(X_0),E_0)=\A^{\min}(\Ae(X_0),E_0)$, implying that 
$(\Lip_{0}\circ \A^{\min}\circ \Lip_{0})^{\min}(X_0,E_0)=\A^{\min}\circ \Lip_{0}(X_0,E_0).
$
\end{proof}

\begin{prop}\label{Prop: Minimal of composition type}
Let $\A$ be a Banach operator ideal. If $(\Lip_{0}\circ \A^{\min}\circ \Lip_{0})^{\min}$ is of composition type, then $id_{\Ae(E)}\in \A^{\max}(\Ae(E), \Ae(E))$ for every finite-dimensional normed space $E$.
\end{prop}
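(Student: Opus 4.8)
The plan is to push the composition-type hypothesis through to a statement about the Dirac map $\delta_E$ and then read off the conclusion from the linearization characterization of composition ideals. The whole argument is short once the right objects are identified.

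First I would record, exactly as in the proof of Proposition~\ref{Prop: No composition}, that for a nonzero finite-dimensional normed space $E$ the Dirac map satisfies $\delta_E\in \Lip_0\circ \A^{\min}\circ \Lip_0(E,\Ae(E))$. Indeed, $id_E$ is a finite-rank operator, so $id_E\in \A^{\min}(E,E)\subset \Lip_0\circ \A^{\min}\circ \Lip_0(E,E)$, and since $\Lip_0\circ \A^{\min}\circ \Lip_0$ has the strong ideal property, the factorization $\delta_E=\delta_E\circ id_E$ yields $\delta_E\in \Lip_0\circ \A^{\min}\circ \Lip_0(E,\Ae(E))$.

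Next I would invoke the previous Proposition: the hypothesis that $(\Lip_0\circ \A^{\min}\circ \Lip_0)^{\min}$ is of composition type is its condition $(i)$, which is equivalent to its condition $(iii)$, namely $(\Lip_0\circ \A^{\min}\circ \Lip_0)^{\max}=\A^{\max}\circ \Lip_0$. Since every Banach Lipschitz operator ideal is contained in its maximal hull (take $\J=\I$ in the defining property of $\I^{\max}$), the first step gives $\delta_E\in (\Lip_0\circ \A^{\min}\circ \Lip_0)^{\max}(E,\Ae(E))=\A^{\max}\circ \Lip_0(E,\Ae(E))$. I would then apply the linearization characterization of composition ideals \cite[Proposition~3.2]{ARSPY}: a Lipschitz map lies in $\A^{\max}\circ \Lip_0$ precisely when its linearization lies in $\A^{\max}$. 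The linearization of $\delta_E$ is the unique $L\in \L(\Ae(E),\Ae(E))$ with $L\circ \delta_E=\delta_E$, so $L$ is the identity on the dense span of the Dirac evaluations, i.e. $L=id_{\Ae(E)}$; hence $id_{\Ae(E)}\in \A^{\max}(\Ae(E),\Ae(E))$, which is the claim.

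The one point requiring care is why the argument must travel through the maximal hull rather than the minimal kernel that appears in the hypothesis. Although the assumption concerns $(\Lip_0\circ \A^{\min}\circ \Lip_0)^{\min}$, the map $\delta_E$ need not belong to that minimal kernel: the ideal and its minimal kernel are only guaranteed to agree on finite pointed metric domains, and $E$ is an infinite metric space. Passing to the maximal hull and using the equivalence $(i)\Leftrightarrow(iii)$ of the previous Proposition is precisely what circumvents this obstruction; beyond it, no step poses any real difficulty.
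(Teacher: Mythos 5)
Your proof is correct and follows essentially the same route as the paper's: establish $\delta_E\in \Lip_0\circ\A^{\min}\circ\Lip_0(E,\Ae(E))$ via the strong ideal property, pass to the maximal hull, use the equivalence $(i)\Leftrightarrow(iii)$ of the preceding proposition to identify that hull with $\A^{\max}\circ\Lip_0$, and conclude via the linearization of $\delta_E$. The extra remark on why one must route through the maximal hull rather than the minimal kernel is a correct and worthwhile clarification, but the argument itself is the paper's.
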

\begin{proof}

As it was done in Proposition~\ref{Prop: No composition}, if $E$ is a finite-dimensional space, we have that $\delta_E \in \Lip_{0}\circ \A^{\min}\circ \Lip_{0}(E,\Ae(E))$. In particular, $\delta_{E}$ belongs to $(\Lip_{0}\circ \A^{\min}\circ \Lip_{0})^{\max}(E,\Ae(E))$, and by the above proposition we obtain that $\delta_E \in \A^{\max}\circ \Lip_{0}(E,\Ae(E))$. This implies that the linearization of $\delta_E$, which is $id_{\Ae (E)}$, belongs to $\A^{\max}(\Ae(E),\Ae(E))$.
\end{proof}

Consequently, if $\A$ is a Banach operator ideal such that the identity of $\Ae(E)$ does not belong to $\A^{\max}(\Ae(E),\Ae(E))$ for some finite-dimensional normed space $E$, then $(\Lip_{0}\circ \A^{\min}\circ \Lip_{0})^{\min}$ and $\A^{\min}\circ \Lip_{0}$ do not coincide. A special case in which the coincidence happens is when $\A$ is a closed Banach operator ideal.

\begin{prop}\label{prop: minimal closed} Let $\A$ be a closed Banach operator ideal. Then $(\Lip_0 \circ \A \circ \Lip_0)^{\min}$ is of composition type. Moreover, $(\Lip_0 \circ \A \circ \Lip_0)^{\min}=\OF \circ \Lip_0$.
\end{prop}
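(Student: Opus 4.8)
The plan is to identify $(\Lip_0\circ \A\circ \Lip_0)^{\min}$ with the composition ideal $\OF\circ \Lip_0$ by exhibiting the latter as a \emph{minimal} Lipschitz operator ideal that is \emph{related} to $\Lip_0\circ \A\circ \Lip_0$, and then to invoke the uniqueness of the minimal kernel. I would open by recording two consequences of $\A$ being closed. First, its ideal norm agrees with the operator norm, and every operator between finite-dimensional spaces, being finite-rank, lies in $\A$; hence on finite-dimensional data $\A$ coincides isometrically with $\OF$, so these two linear ideals are related and, $\OF$ being minimal, $\A^{\min}=\OF$. Consequently $\OF\circ\Lip_0=\A^{\min}\circ\Lip_0$, which is a minimal Lipschitz operator ideal by \cite[Theorem~4.8]{TV} and is of composition type by its very definition.

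The heart of the matter is that $\OF\circ\Lip_0$ and $\Lip_0\circ\A\circ\Lip_0$ are related, i.e.\ they coincide isometrically on every finite pointed metric space $X_0$ and every finite-dimensional Banach space $E_0$. I would exploit that $\Ae(X_0)$ is finite-dimensional: for every $f\in\Lip_0(X_0,E_0)$ the linearization $L_f\in\L(\Ae(X_0),E_0)$ is then of finite rank, satisfies $\|L_f\|=\Lip(f)$, and gives $f=L_f\circ\delta_{X_0}$ with $\Lip(\delta_{X_0})=1$. Read as $f=L_f\circ\delta_{X_0}$ this shows $f\in\OF\circ\Lip_0(X_0,E_0)$ with $\|f\|_{\OF\circ\Lip_0}\le\Lip(f)$; read as $f=id_{E_0}\circ L_f\circ\delta_{X_0}$ it shows $f\in\Lip_0\circ\A\circ\Lip_0(X_0,E_0)$ with $\|f\|_{\Lip_0\circ\A\circ\Lip_0}\le\|L_f\|_{\A}=\|L_f\|=\Lip(f)$, where closedness of $\A$ is spent in the middle equality. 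Since the Lipschitz constant is dominated by any Lipschitz ideal norm, both ideals equal $\Lip_0(X_0,E_0)$ isometrically, proving they are related.

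To conclude, related ideals share the same minimal kernel---the minimal kernel being the smallest ideal with the prescribed finite-dimensional behavior---so
$$
(\Lip_0\circ\A\circ\Lip_0)^{\min}=(\OF\circ\Lip_0)^{\min}=\OF\circ\Lip_0,
$$
the final equality holding because $\OF\circ\Lip_0$ is itself minimal. This delivers both claims at once. The point I would scrutinize most is the passage from set-theoretic to isometric coincidence on finite-dimensional data: the trivial factorization through $\Ae(X_0)$ is norm-optimal precisely because $\A$ is closed, and it is this feature that makes coincidence hold here, in contrast with the failure for general $\A$ noted in the discussion preceding the statement.
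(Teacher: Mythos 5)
Your proof is correct, but it reaches the conclusion by a genuinely different route than the paper. The paper's argument is a two-sided sandwich: the upper bound $(\Lip_0\circ\A\circ\Lip_0)^{\min}\subset\OF\circ\Lip_0$ comes from the general fact that every minimal Banach Lipschitz operator ideal is contained in $\OF\circ\Lip_0$ \cite[Proposition~4.5]{TV}, and the lower bound from monotonicity of the minimal kernel applied to $\A\circ\Lip_0\subset\Lip_0\circ\A\circ\Lip_0$ together with $(\A\circ\Lip_0)^{\min}=\A^{\min}\circ\Lip_0=\OF\circ\Lip_0$ \cite[Theorem~4.8]{TV}. You instead compute the finite-dimensional trace of $\Lip_0\circ\A\circ\Lip_0$ directly: using the linearization $L_f$ through the finite-dimensional space $\Ae(X_0)$ and the fact that a closed ideal carries the operator norm, you show that both $\Lip_0\circ\A\circ\Lip_0$ and $\OF\circ\Lip_0$ coincide isometrically with all of $\Lip_0$ on finite data, and then you invoke the uniqueness of the minimal kernel among related ideals together with the minimality of $\OF\circ\Lip_0$ (again \cite[Theorem~4.8]{TV}). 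Your route avoids \cite[Proposition~4.5]{TV} at the price of an explicit finite-dimensional computation, and it yields a slightly stronger byproduct, namely that for closed $\A$ the ideal $\Lip_0\circ\A\circ\Lip_0$ is related to $\Lip_0$ itself. Both arguments ultimately rest on the same two external inputs: $\A^{\min}=\OF$ for closed $\A$, and the minimality of $\A^{\min}\circ\Lip_0$.
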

\begin{proof}
Since $(\Lip_{0}\circ \A\circ \Lip_{0})^{\min}$ is minimal,  by \cite[Proposition 4.5]{TV} we have that $(\Lip_{0}\circ \A\circ \Lip_{0})^{\min}\subset \OF\circ \Lip_{0}$. On the other hand, we have $\A\circ \Lip_{0}\subset \Lip_{0}\circ \A\circ \Lip_{0}$, which implies that
$$
\A^{\min}\circ \Lip_{0}\subset (\Lip_{0}\circ \A\circ \Lip_{0})^{\min}.
$$
Since $\A$ is closed, we know that $\A^{\min}=\OF$, and the proof is complete.
\end{proof}
In particular, for the ideal of approximable operators,  by Proposition~\ref{Prop: No composition},  $\Lip_0 \circ \OF \circ \Lip_0$ is not of composition type, but by Proposition~\ref{prop: minimal closed} its minimal kernel is of composition type, giving a negative answer of \cite[Problem~4.15]{TV}.

We do not know any other example besides $\OF$ of a minimal Banach operator ideal such that $(\Lip_0 \circ \A^{\min} \circ \Lip_0)^{\min}$ is of composition type. However, we have the following result. Before stating it, recall that, following \cite[Definition~4.7.1]{Pie}, an operator $T\colon E\rightarrow F$ belongs to the {\it surjective hull} of a Banach operator ideal $\A$, denoted as $\A^{\sur}$, if $T\circ Q_E \in \A(\ell_1(B_E),F)$ where $Q_E\colon \ell_1(B_E)\rightarrow E$ is the canonical quotient map.

\begin{prop}\label{prop:No_coincide} Let $\A$ be a Banach operator ideal. If $(\Lip_0\circ \A^{\min}\circ \Lip_0)^{\min}$ if of composition type, then $(\A^{\max})^{\sur}= \L$.
\end{prop}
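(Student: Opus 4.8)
The plan is to leverage Proposition~\ref{Prop: Minimal of composition type}, which already tells us that if $(\Lip_0 \circ \A^{\min} \circ \Lip_0)^{\min}$ is of composition type, then $id_{\Ae(E)} \in \A^{\max}(\Ae(E), \Ae(E))$ for \emph{every} finite-dimensional normed space $E$. So the real task is to translate this family of statements about identities on free spaces $\Ae(E)$ into the conclusion that the surjective hull of $\A^{\max}$ is all of $\mathcal{L}$. First I would recall the definition of the surjective hull: $T \colon E_0 \to F$ lies in $\A^{\sur}$ precisely when $T \circ Q_{E_0} \in \A(\ell_1(B_{E_0}), F)$, where $Q_{E_0}\colon \ell_1(B_{E_0}) \to E_0$ is the canonical quotient map. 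Since $\A^{\max}$ is maximal, its surjective hull is itself maximal, and to check $(\A^{\max})^{\sur} = \L$ it suffices to verify the identity statement on finite-dimensional spaces: $id_{E_0} \in (\A^{\max})^{\sur}(E_0, E_0)$ for every finite-dimensional $E_0$, and then pass to the limit by maximality (regularity/ultrastability arguments in the spirit of Section~\ref{Sec: Maximal}).

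The key mechanism I expect to use is the universal/lifting property of the Arens--Eells space. For a finite-dimensional space $E_0$, the canonical quotient $Q_{E_0}\colon \ell_1(B_{E_0}) \to E_0$ factors naturally through the free space: the inclusion $B_{E_0} \hookrightarrow E_0$ linearizes to a map $\ell_1(B_{E_0}) \to \Ae(E_0)$, and the barycenter map $\beta_{E_0}\colon \Ae(E_0) \to E_0$ satisfies $\beta_{E_0} \circ \delta_{E_0} = id_{E_0}$. The heart of the argument is therefore to factor $id_{E_0} \circ Q_{E_0}$ through $\Ae(E_0)$ in a way that inserts the identity $id_{\Ae(E_0)}$ as the $\A^{\max}$-member. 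Concretely, I would write a factorization
\begin{align*}
\ell_1(B_{E_0}) \longrightarrow \Ae(E_0) \xrightarrow{\ id_{\Ae(E_0)}\ } \Ae(E_0) \xrightarrow{\ \beta_{E_0}\ } E_0,
\end{align*}
so that $id_{E_0} \circ Q_{E_0}$ equals the composite. Using the ideal property of $\A^{\max}$ together with $id_{\Ae(E_0)} \in \A^{\max}(\Ae(E_0), \Ae(E_0))$ from Proposition~\ref{Prop: Minimal of composition type}, the whole composite lands in $\A^{\max}$, which is exactly the statement that $id_{E_0} \in (\A^{\max})^{\sur}(E_0, E_0)$.

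The main obstacle, and the step needing the most care, is controlling norms uniformly in $E_0$ so that the finite-dimensional identity membership genuinely forces $(\A^{\max})^{\sur} = \L$ with the norm estimate $\|\cdot\|_{(\A^{\max})^{\sur}} \le \|\cdot\|$. The ideal inequality only gives $\|id_{E_0}\|_{(\A^{\max})^{\sur}} \le \|\beta_{E_0}\|\,\|id_{\Ae(E_0)}\|_{\A^{\max}}$, and while $\|\beta_{E_0}\| = 1$, the quantity $\|id_{\Ae(E_0)}\|_{\A^{\max}}$ a priori depends on $E_0$ and need not be bounded. One must verify that membership $(\A^{\max})^{\sur}(E_0,E_0) \ni id_{E_0}$ (even without uniform norm control) already yields $(\A^{\max})^{\sur} = \L$ as \emph{classes}, invoking that $(\A^{\max})^{\sur}$ is a maximal Banach operator ideal containing all finite-dimensional identities, hence contains $\L$ at the finite-dimensional level, and then equals $\L$ everywhere by the Kürsten--Pietsch description of maximal ideals (regularity and ultrastability). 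I would close by noting that since $(\A^{\max})^{\sur} \subset \L$ trivially, the reverse inclusion just established gives equality.
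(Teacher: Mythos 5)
There is a genuine gap, and it is exactly at the step you flag as ``the main obstacle'' and then dismiss. The intermediate conclusion you reach --- $id_{E_0}\in(\A^{\max})^{\sur}(E_0,E_0)$ for every \emph{finite-dimensional} $E_0$ --- is vacuous: every Banach operator ideal contains the finite-rank operators, hence all finite-dimensional identities, so this holds for $(\A^{\max})^{\sur}$ no matter what $\A$ is. What distinguishes $\L$ from a proper maximal ideal is not class membership of the $id_{E_0}$ but the growth of their ideal norms. For instance, for $\A=\Pi_1$ (the $1$-summing operators, which are maximal and surjective) one has $id_{E_0}\in\Pi_1^{\sur}(E_0,E_0)=\Pi_1(E_0,E_0)$ for every finite-dimensional $E_0$, yet $\Pi_1\neq\L$ because $\|id_{\ell_2^n}\|_{\Pi_1}\to\infty$. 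So your closing claim --- that a maximal ideal ``containing all finite-dimensional identities\ldots contains $\L$ at the finite-dimensional level, and then equals $\L$ everywhere'' --- is false: the K\"ursten--Pietsch/maximality machinery transfers finite-dimensional information to infinite dimensions only through the supremum $\sup\{\|q_{L}^{F}\circ T\circ\iota_{M}^{E}\|_{\A}\}$, i.e.\ only in the presence of uniform norm bounds, and Proposition~\ref{Prop: Minimal of composition type} gives no bound whatsoever on $\|id_{\Ae(E_0)}\|_{\A^{\max}}$ as $E_0$ varies. Your factorization $\ell_1(B_{E_0})\rightarrow\Ae(E_0)\rightarrow\Ae(E_0)\rightarrow E_0$ is correct as far as it goes, but since it is only ever invoked with $E_0$ finite-dimensional it never produces anything beyond the trivial regime.

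The paper's proof avoids this by extracting a single \emph{infinite-dimensional} consequence from Proposition~\ref{Prop: Minimal of composition type}: taking $E=\mathbb R$ gives $id_{\Ae(\mathbb R)}\in\A^{\max}(\Ae(\mathbb R),\Ae(\mathbb R))$, and $\Ae(\mathbb R)$ is isometric to $L_1(\mathbb R)$, which is projectively universal for separable Banach spaces. Hence every separable $E$ is a quotient of $\Ae(\mathbb R)$; the quotient operator lies in $\A^{\max}$ by the ideal property, the induced isomorphism $\overline T\colon \Ae(\mathbb R)/\ker(T)\rightarrow E$ lies in $\A^{\max\ \sur}$, and therefore $id_E\in\A^{\max\ \sur}(E,E)$ for every separable $E$ --- a genuinely nontrivial statement because $E$ may be infinite-dimensional. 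This places the closed ideal of separable-range operators inside $\A^{\max\ \sur}$, and taking maximal hulls (using $\A^{\max\ \sur}=\A^{\sur\ \max}$) yields $(\A^{\max})^{\sur}=\L$. If you want to repair your argument, the fix is not better bookkeeping of norms over finite-dimensional $E_0$ (no uniform bound is available), but rather running your lifting argument on one infinite-dimensional universal space as the paper does.
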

\begin{proof}
Fix a Banach operator ideal $\A$ and suppose that $(\Lip_0\circ \A^{\min}\circ \Lip_0)^{\min}$ is of composition type. By Proposition~\ref{Prop: Minimal of composition type}  we have that $id_{\Ae(\mathbb R)} \in \A^{\max}(\Ae(\mathbb R), \Ae(\mathbb R))$. Since $\Ae(\mathbb R)$ is isometric to $L_1(\mathbb R)$ (which is a projectively universal separable Banach space since it contains a complemented copy of $\ell_1$), for every separable Banach space $E$ there exists a bounded linear operator from $\Ae(\mathbb R)$ onto $E$. Call such operator $T\colon \Ae(\mathbb R)\rightarrow E$, which belongs to $\A^{\max}(\Ae(\mathbb R),E)$ and by the usual diagram 
$$
\xymatrix{
\Ae(\mathbb R) \ar@{->>}[d]_q\ar[r]^T& E\\
\Ae(\mathbb R)/\ker(T) \ar[ur]^{\overline T} &
}
$$
we have that $\overline T$ is a isomorphism which belongs to $\A^{\max \ \sur}(\Ae(\mathbb R)/\ker(T),E)$ (see \cite[9.8]{DF}). By the ideal property, we deduce that  $id_{E}\in \A^{\max \ \sur}(E,E)$. Therefore, the ideal of separable operators (those which have separable image) is included in $\A^{\max \ \sur}$. By \cite[Proposition 4.2.5]{Pie}, the ideal of separable operators is closed. Then, taking maximal hulls and using that $\A^{\max \ \sur}=\A^{\sur \ \max}$ \cite[Proposition~8.7.14]{Pie}, we get that $\A^{\max \ \sur}=\L$, and the proof is completed.
\end{proof}

In particular, if $\A$ is the maximal Banach ideal of $p$-integral, $p$-summing, $p$-dominated for $1\leq p<\infty$ or the maximal Banach ideal of $p$-factorable operators with $1<p\leq \infty$, by the above $(\Lip_0\circ \A^{\min}\circ \Lip_0)^{\min}$ is not of composition type. The next results shows that for the maximal ideal of $1$-factorable operators $\mathfrak{L}_1$, $(\Lip_0 \circ \mathfrak{L}_1^{\min} \circ \Lip_0)^{\min}$ is not of composition type, in spite of the fact that $\mathfrak{L}_1^{\sur}=\L$, implying that the converse of Proposition~\ref{prop:No_coincide} does not hold.

\begin{prop} The ideal $(\Lip_0 \circ \mathfrak{L}_1^{\min} \circ \Lip_0)^{\min}$ is not of composition type.
\end{prop}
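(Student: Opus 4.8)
The plan is to argue by contradiction, but \emph{not} through Proposition~\ref{prop:No_coincide}: since $\mathfrak{L}_1^{\sur}=\L$, that criterion is vacuous for $\mathfrak{L}_1$ and gives no information. Instead I would run the argument through Proposition~\ref{Prop: Minimal of composition type}, combined with the maximality of $\mathfrak{L}_1$ and a non-embedding fact for the free space over $\mathbb{R}^2$. This is precisely the point of the proposition: it exhibits an ideal with $\A^{\sur}=\L$ for which the minimal kernel is still \emph{not} of composition type, so the converse of Proposition~\ref{prop:No_coincide} fails.

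So, suppose towards a contradiction that $(\Lip_0\circ \mathfrak{L}_1^{\min}\circ \Lip_0)^{\min}$ is of composition type. By Proposition~\ref{Prop: Minimal of composition type}, applied with $\A=\mathfrak{L}_1$, this forces $id_{\Ae(E)}\in \mathfrak{L}_1^{\max}(\Ae(E),\Ae(E))$ for \emph{every} finite-dimensional normed space $E$. Since $\mathfrak{L}_1$ is maximal, $\mathfrak{L}_1^{\max}=\mathfrak{L}_1$, and therefore $id_{\Ae(E)}\in \mathfrak{L}_1(\Ae(E),\Ae(E))$.

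Next I would unwind what it means for an identity to be $1$-factorable. A factorization $id_{\Ae(E)}=B\circ A$ with $A\colon \Ae(E)\to L_1(\mu)$ and $B\colon L_1(\mu)\to \Ae(E)$ bounded and linear shows that $A$ is an isomorphism onto its range (it admits the left inverse $B$) and that $A\circ B$ is a bounded linear projection of $L_1(\mu)$ onto $A(\Ae(E))$. Hence $\Ae(E)$ is isomorphic to a complemented subspace of $L_1(\mu)$; in particular $\Ae(E)$ embeds isomorphically into $L_1(\mu)$. As $\Ae(E)$ is separable, its image lies in a separable subspace of $L_1(\mu)$, so one may replace $L_1(\mu)$ by $L_1[0,1]$ and conclude that $\Ae(E)$ embeds isomorphically into $L_1[0,1]$.

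Finally I would specialize to $E=\mathbb{R}^2$ (any finite-dimensional space of dimension at least $2$ works equally well). The free space $\Ae(\mathbb{R}^2)$ does \emph{not} embed isomorphically into $L_1$: this is the content of the theorem of Naor and Schechtman that the planar earthmover (Wasserstein-$1$) metric fails to embed into $L_1$, which translates into the assertion that $\Ae(\mathbb{R}^2)$ admits no linear embedding into $L_1$. This contradicts the embedding produced in the previous step, completing the argument. The main obstacle is exactly this last input: the operator-ideal manipulations are routine bookkeeping, whereas the genuine content is the deep metric-geometry fact that $\Ae(\mathbb{R}^2)$ does not linearly embed into $L_1$, in sharp contrast with the isometry $\Ae(\mathbb{R})\cong L_1(\mathbb{R})$ exploited elsewhere in this section. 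Some care is also needed in the separability reduction, so that the factorization through an abstract $L_1(\mu)$ really yields an embedding into the separable $L_1[0,1]$ to which the non-embedding theorem applies.
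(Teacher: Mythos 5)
Your proposal is correct and follows essentially the same route as the paper: contradiction via Proposition~\ref{Prop: Minimal of composition type}, maximality of $\mathfrak{L}_1$ to get that $id_{\Ae(\mathbb{R}^2)}$ is $1$-factorable, hence $\Ae(\mathbb{R}^2)$ embeds isomorphically into some $L_1(\mu)$, contradicting Naor--Schechtman. The only minor point is that the definition of $\mathfrak{L}_1$ gives a factorization of the canonical inclusion $J_{\Ae(\mathbb{R}^2)}$ through $L_1(\mu)$ (the paper invokes \cite[Corollary~18.6.1]{DF} for this) rather than of the identity itself, but this still yields the isomorphic embedding you need, so your argument goes through unchanged.
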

\begin{proof}
Suppose that $(\Lip_0 \circ \mathfrak{L}_1^{\min} \circ \Lip_0)^{\min}$ is of composition type. Then, using Proposition \ref{Prop: Minimal of composition type}, we have that the identity operator of $\Ae(\mathbb{R}^{2})$ is $1$-factorable. By \cite[Corollary~18.6.1]{DF}, the canonical inclusion of $\Ae(\mathbb{R}^{2})$ into its bidual can be factorized via $L_1(\mu)$ for some measure $\mu$. From this it follows that $\Ae(\mathbb{R}^{2})$ is isomorphic to a subspace of $L_{1}(\mu)$. This contradicts a result of Naor and Schechtman (see also \cite[Remark 4.2]{CDW}). Consequently, $(\Lip_{0}\circ \mathfrak{L}_{1}^{\min}\circ \Lip_{0})^{\min}$ is not of composition type, as we wanted to show.
\end{proof}

\section{Maximality and ultrastability of Lipschitz operator ideals}\label{Sec: Maximal}

In what follows, for a pointed metric space $X$, $\MFIN(X)$ denotes the family of all finite metric subsets of $X$ with ``0'', meanwhile for a Banach space $E$, $\FIN(E)$ and $\COFIN(E)$ denotes the family of all finite- and cofinite-dimensional subspaces of $E$ respectively. For a subset $X_0$ of $X$ we denote by $\iota_{X_0}^{X}$ the canonical inclusion of $X_0$ into $X$ and, for a subspace $L$ of $E$, $q_L^E\colon E\rightarrow E/L$ is the canonical quotient map.
 The theory of ultraproducts in Banach spaces was introduced by Dacunha--Castelle and Krivine \cite{DaCKr}. For the basics of this theory we refer to \cite{Hei}. Let $(X_i)_{i \in I}$ be a family of pointed metric spaces with base point $0_i \in X_i$ and let $(E_i)_{i \in I}$ be a family of Banach spaces. Take an ultrafilter $\ult$ in $I$. We denote by $(E_i)_{\ult}$ the ultraproduct of the family of Banach spaces $(E_{i})_{i\in I}$ with respect to $\ult$. Similarly, we denote by $(X_{i})_{\ult}$ the so-called ultralimit of the $X_{i}$'s with respect to $\ult$ (see \cite[Definition 7.19]{Roe}), which is a pointed metric space with base point $(0_i)_{\ult}$. When all the $X_{i}$'s are Banach spaces, the ultralimit coincides with the Banach space ultraproduct. The norm of an element $(x_i)_{\ult}$ in $(E_i)_{\ult}$ can be computed as $\|(x_i)_{\ult}\|=\lim_{\ult}\|x_i\|$, meanwhile the distance of $(x_i)_{\ult}$ to $(y_i)_{\ult}$ in $(X_i)_{\ult}$ is $d((x_i)_{\ult},(y_i)_{\ult})=\lim_{\ult} d(x_i, y_i)$. For a family of Lipschitz mappings $f_i\colon X_i\rightarrow E_i$ with $\sup_{i \in I}\Lip(f_i)\leq C$ for some $C>0$, the rule
$$
(f_i)^{\ult}(x_i)_{\ult}\colon =(f_ix_i)_{\ult}
$$
defines a Lipschitz map between $(X_i)_{\ult}$ and $(E_i)_{\ult}$ with Lipschitz norm less or equal to $C$. In the case that the family $(X_i)_{i\in I}$ are Banach spaces and $(f_i)_{i \in I}$ consists of linear operators, $(f_i)^{\ult}$ is linear as well.

Recall that a Banach operator ideal $\A$ is {\it ultrastable} if for all families of Banach spaces $(E_i)_{i \in I}, (F_i)_{i \in I}$ and linear operators $S_i\in \A(E_i,F_i)$  with $\|S_i\|_{\A}\leq C$ for some $C>0$ for all $i\in I$, we have that for any ultrafilter $\ult$ of $I$, the linear operator $(S_i)^{\ult}$ belongs to $\A((E_i)_{\ult}, (F_i)_{\ult})$ and $\|(S_i)^{\ult}\|_{\A}\leq C$. We now extend the notion of ultrastable to Lipschitz ideals in verbatim.

\begin{definition} A Banach Lipschitz operator ideal $\I$ is said to be ultrastable if for any families of pointed metric spaces $(X_{i})_{i\in I}$, Banach spaces $(E_{i})_{i\in I}$,  every family  of Lipschitz operators $f_{i}\in \I(X_{i},E_{i})$ such that $\sup\limits_{i\in I}\|f_{i}\|_{\mathcal{I}}<\infty$ and every ultrafilter $\ult$ in $I$, the Lipschitz operator $(f_i)^{\ult}$ belongs to $\I((X_{i})_{\ult}, (E_{i})_{\ult})$ and $\|(f_i)^{\ult}\|_{\I} \leq \lim_{\ult} \|f_i\|_{\I}$.
\end{definition}

The following two results show us a way to construct ultrastable Lipschitz operator ideals.

\begin{prop}\label{prop:ultrastable a derecha} Let $\A$ be a Banach operator ideal. If $\A$ is ultrastable, then $\A\circ \Lip_0$ is ultrastable.
\end{prop}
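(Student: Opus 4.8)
The plan is to factor each $f_i$ through a linear map in $\A$, pass to ultraproducts factor by factor, and transfer the membership of the middle factor using the ultrastability of $\A$. Fix $\ep>0$ and set $L:=\lim_{\ult}\|f_i\|_{\A\circ\Lip_0}$, which is finite since $\sup_i\|f_i\|_{\A\circ\Lip_0}<\infty$ by hypothesis. The set $A:=\{i:\|f_i\|_{\A\circ\Lip_0}\le L+\ep\}$ belongs to $\ult$, and since replacing $f_i$ by $0$ for $i\notin A$ does not alter $(f_i)^{\ult}$, I may assume $\sup_i\|f_i\|_{\A\circ\Lip_0}\le L+\ep$. For each $i$ with $f_i\ne 0$ I choose a factorization $f_i=T_i\circ g_i$ with $g_i\in\Lip_0(X_i,G_i)$ and $T_i\in\A(G_i,E_i)$, and after the harmless rescaling $g_i\mapsto g_i/\Lip(g_i)$, $T_i\mapsto \Lip(g_i)T_i$, I arrange $\Lip(g_i)=1$ and $\|T_i\|_{\A}\le(1+\ep)\|f_i\|_{\A\circ\Lip_0}\le(1+\ep)(L+\ep)$; for the indices with $f_i=0$ I take the trivial factorization $g_i=0$, $T_i=0$.

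First I would assemble the factorwise ultraproducts. Since $\sup_i\Lip(g_i)\le 1$, the ultraproduct construction recalled above produces a Lipschitz map $(g_i)^{\ult}\in\Lip_0\big((X_i)_{\ult},(G_i)_{\ult}\big)$ with $\Lip\big((g_i)^{\ult}\big)\le 1$, vanishing at the base point $(0_i)_{\ult}$ because each $g_i$ does. Since $\sup_i\|T_i\|_{\A}\le(1+\ep)(L+\ep)<\infty$, the linear ultraproduct $(T_i)^{\ult}\in\L\big((G_i)_{\ult},(E_i)_{\ult}\big)$ is well defined, and the ultrastability of $\A$ yields $(T_i)^{\ult}\in\A\big((G_i)_{\ult},(E_i)_{\ult}\big)$ with $\|(T_i)^{\ult}\|_{\A}\le(1+\ep)(L+\ep)$.

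The key verification is that these ultraproducts recombine into the given map, i.e.\ $(f_i)^{\ult}=(T_i)^{\ult}\circ(g_i)^{\ult}$. This is immediate from the definitions: for $(x_i)_{\ult}\in(X_i)_{\ult}$,
$$(T_i)^{\ult}\big((g_i)^{\ult}(x_i)_{\ult}\big)=(T_i)^{\ult}\big((g_i(x_i))_{\ult}\big)=(T_ig_i(x_i))_{\ult}=(f_i(x_i))_{\ult}=(f_i)^{\ult}(x_i)_{\ult}.$$
Thus $(f_i)^{\ult}$ is a Lipschitz map followed by an operator of $\A$ through the intermediate ultraproduct $(G_i)_{\ult}$, so $(f_i)^{\ult}\in\A\circ\Lip_0\big((X_i)_{\ult},(E_i)_{\ult}\big)$ with $\|(f_i)^{\ult}\|_{\A\circ\Lip_0}\le\|(T_i)^{\ult}\|_{\A}\,\Lip\big((g_i)^{\ult}\big)\le(1+\ep)(L+\ep)$. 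Letting $\ep\to 0$ gives $\|(f_i)^{\ult}\|_{\A\circ\Lip_0}\le L=\lim_{\ult}\|f_i\|_{\A\circ\Lip_0}$, which is exactly the ultrastability of $\A\circ\Lip_0$. I expect no deep difficulty here: the argument is a routine factorwise ultraproduct, and the only points requiring care are setting up the intermediate ultraproduct $(G_i)_{\ult}$ with the spaces $G_i$ varying in $i$, the uniform normalization $\Lip(g_i)=1$ together with the degenerate cases $f_i=0$ (absorbed by the preliminary passage to the set $A\in\ult$), and the displayed composition identity, which is precisely what lets the plain uniform-bound form of linear ultrastability deliver the required limit bound.
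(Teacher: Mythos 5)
Your argument is correct and is essentially the paper's proof: factor each $f_i=T_i\circ g_i$ with $\Lip(g_i)\le 1$ and $\|T_i\|_{\A}\le(1+\ep)\|f_i\|_{\A\circ\Lip_0}$, apply linear ultrastability to $(T_i)^{\ult}$, and use the identity $(f_i)^{\ult}=(T_i)^{\ult}\circ(g_i)^{\ult}$. Your preliminary restriction to the set $A\in\ult$ is a harmless (slightly cleaner) bookkeeping step that the paper handles instead by passing directly to $\lim_{\ult}\|T_i\|_{\A}=(1+\ep)\lim_{\ult}\|f_i\|_{\A\circ\Lip_0}$.
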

\begin{proof}
Take a family of pointed metric spaces $(X_i)_{i \in I}$, of Banach spaces $(E_i)_{i \in I}$ and of Lipschitz operators $(f_i)_{i\in I}$ such that $f_i\in \A\circ \Lip_{0}(X_i, E_i)$ for every $i \in I$ and $\sup\limits_{i\in I}\|f_{i}\|_{\A\circ \Lip_{0}}<\infty$. Fix $\varepsilon >0$ and for each $i \in I$, take a factorization  of $f_i=T_{i}\circ g_{i}$, where $g_{i}\in \Lip_{0}(X_{i},F_{i})$ with $\Lip(g_{i})\leq 1$, $T_{i}\in \A(F_{i},E_{i})$ with $\|T_{i}\|_{\A}=(1+\varepsilon)\|f_{i}\|_{\A\circ \Lip_{0}}$, for some Banach spaces $F_i$. Note that $
(f_{i})^{\ult}=(T_{i})^{\ult}\circ (g_{i})^{\ult}$. Since $\A$ is ultrastable, we get that $(T_{i})^{\ult}\in \A$, implying that $(f_{i})^{\ult}\in\A\circ \Lip_{0}((X_{i})_{\ult},(E_{i})_{\ult})$, with
\begin{align*}
\|(f_{i})^{\ult}\|_{\A\circ \Lip_{0}}\leq \|(T_{i})^{\ult}\|_{\A} \Lip((f_{i})^{\ult})\leq \lim\limits_{\ult}\|T_{i}\|_{\A}=(1+\varepsilon) \lim\limits_{\ult}\|f_{i}\|_{\A\circ \Lip_{0}}
\end{align*}
and the conclusion follows.
\end{proof}

The following proposition shows a way to construct an ultrastable Banach Lipschitz operator ideal with the strong ideal property from an ultrastable Banach Lipschitz operator ideal $\I$. For a pointed metric space $X$ and a Banach space $E$, we denote by $\Lip_0 \circ \I(X,E)$ the space of Lipschitz operators $f\colon X\rightarrow E$ for which there exist a Banach space $F$, and Lipschitz operators $g_1\in \I(X,F)$,  $g_2 \in \Lip_0(F,E)$ such that $f=g_2\circ g_1$. If we endow this space with the norm $\|f\|_{\Lip_0 \circ \I}=\inf \{\|g_1\|_{\I} \Lip_0(g_2)\}$, where the infimum is taken over all such factorizations of $f$, it becomes a Banach Lipschitz operator ideal.

\begin{prop}\label{prop:ultrastable a izquierda}Let $\I$ be a Banach Lipschitz operator ideal. If $\I$ is ultrastable, then $\Lip_0\circ \I$ is ultrastable.
\end{prop}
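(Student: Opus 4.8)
The plan is to mirror the proof of Proposition~\ref{prop:ultrastable a derecha}: factor each $f_i$ through an intermediate Banach space, take the ultraproduct of each factor separately, and apply ultrastability of $\I$ only to the inner factor while relying on the automatic Lipschitz-stability of the outer factor.

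First I would fix families of pointed metric spaces $(X_i)_{i\in I}$, of Banach spaces $(E_i)_{i\in I}$ and Lipschitz operators $f_i\in \Lip_0\circ \I(X_i,E_i)$ with $\sup_{i}\|f_i\|_{\Lip_0\circ \I}<\infty$, together with an ultrafilter $\ult$ in $I$. Fixing $\ep>0$, for each $i$ I take a factorization $f_i=g_{2,i}\circ g_{1,i}$ through a Banach space $F_i$, where $g_{1,i}\in \I(X_i,F_i)$ and $g_{2,i}\in \Lip_0(F_i,E_i)$; after rescaling (the case $f_i=0$ being trivial) I may assume $\Lip(g_{2,i})\leq 1$ and $\|g_{1,i}\|_{\I}\leq (1+\ep)\|f_i\|_{\Lip_0\circ \I}$. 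Since $\Lip(g_{1,i})\leq \|g_{1,i}\|_{\I}$ by property (i') of the Lipschitz ideal norm, both families $(\Lip(g_{1,i}))_i$ and $(\Lip(g_{2,i}))_i$ are uniformly bounded, so the Banach-space ultraproduct $(F_i)_{\ult}$ is well defined and the ultraproduct maps $(g_{1,i})^{\ult}\colon (X_i)_{\ult}\to (F_i)_{\ult}$ and $(g_{2,i})^{\ult}\colon (F_i)_{\ult}\to (E_i)_{\ult}$ are well-defined Lipschitz operators.

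Next I would verify the factorization $(f_i)^{\ult}=(g_{2,i})^{\ult}\circ (g_{1,i})^{\ult}$ by a direct computation on a representative $(x_i)_{\ult}$, since both sides equal $(g_{2,i}(g_{1,i}(x_i)))_{\ult}=(f_i(x_i))_{\ult}$. Ultrastability of $\I$ then gives $(g_{1,i})^{\ult}\in \I((X_i)_{\ult},(F_i)_{\ult})$ with $\|(g_{1,i})^{\ult}\|_{\I}\leq \lim_{\ult}\|g_{1,i}\|_{\I}$, while $(g_{2,i})^{\ult}$ is an arbitrary Lipschitz map with $\Lip((g_{2,i})^{\ult})\leq \lim_{\ult}\Lip(g_{2,i})\leq 1$. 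This exhibits $(f_i)^{\ult}$ as a member of $\Lip_0\circ \I((X_i)_{\ult},(E_i)_{\ult})$ and yields
$$
\|(f_i)^{\ult}\|_{\Lip_0\circ \I}\leq \|(g_{1,i})^{\ult}\|_{\I}\,\Lip((g_{2,i})^{\ult})\leq (1+\ep)\lim_{\ult}\|f_i\|_{\Lip_0\circ \I}.
$$
Letting $\ep\to 0$ finishes the argument.

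No single step is a genuine obstacle; the proof is routine once the setup is in place. The only point requiring care is the bookkeeping for the intermediate spaces $F_i$, which vary with $i$ and whose ultraproduct $(F_i)_{\ult}$ must serve simultaneously as the target of $(g_{1,i})^{\ult}$ and the source of $(g_{2,i})^{\ult}$. The conceptual crux is recognizing that ultrastability is needed \emph{only} for the inner factor, because the outer factor is a plain Lipschitz map whose ultraproduct is automatically Lipschitz with controlled constant.
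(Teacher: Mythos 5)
Your proposal is correct and follows essentially the same route as the paper's proof: factor each $f_i$ through $F_i$ with the inner factor in $\I$ and the outer one a plain Lipschitz map, observe $(f_i)^{\ult}=(g_{2,i})^{\ult}\circ(g_{1,i})^{\ult}$, and apply ultrastability of $\I$ only to the inner factor. The only cosmetic difference is that you use a multiplicative $(1+\ep)$ in choosing the near-optimal factorization where the paper uses an additive $\ep$; both yield the required norm estimate.
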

\begin{proof}

Take a family pointed metric spaces $(X_i)_{i \in I}$, a family of Banach spaces $(E_i)_{i\in I}$ and for each $i\in I$ consider Lipschitz operators $f_{i}\in \Lip_0(X_i,E_i)$ such that $\sup\limits_{i\in I}\|f_{i}\|_{\Lip_{0}\circ \I}<\infty$. Fix $\varepsilon>0$ and for each $i\in I$, write $f_{i}=h_{i}\circ g_{i}$, where $\Lip(h_{i})\leq 1$, $g_{i}$ is a Lipschitz operator  which belongs to $\I$ with $\|f_{i}\|_{\I}\leq \|g_{i}\|_{\Lip_{0}\circ \I}+\varepsilon$. Since
\begin{align*}
(f_{i})^{\ult}=(h_{i})^{\ult}\circ (g_{i})^{\ult}
\end{align*}
and $\I$ is ultrastable, we get that $(g_{i})^{\ult}\in \I$, implying that $(f_{i})^{\ult}\in \Lip_{0}\circ \I$, with
\begin{align*}
\|(f_{i})^{\ult}\|_{\Lip_{0}\circ \I}\leq \Lip((h_{i})^{\ult})\|(g_{i})^{\ult}\|_{\I}\leq \lim\limits_{\ult}\|g_{i}\|_{\I}\leq \varepsilon+\lim\limits_{\ult}\|f_{i}\|_{\Lip_{0}\circ \I}.
\end{align*}
and the conclusion follows.
\end{proof}

As a direct consequence of the two previous results, if $\A$ is an ultrastable Banach operator ideal, then $\Lip_0\circ \A \circ \Lip_0$ is an ultrastable Banach Lipschitz operator ideal.

Now, we turn our attention to maximal Lipschitz operator ideal. Analogous to what happens in the linear case, the maximal hull of a Banach Lipschitz operator ideal is defined by its properties when is considered over finite metric spaces. Following \cite{CPJVVV}, a Lipschitz operator $f\in \Lip_0(X,E)$ belongs to the maximal hull of a Banach Lipschitz operator ideal $\I$ if 
$$
\|f\|_{\I^{\max}}\colon =\sup\{\|q_L^E \circ f\circ  \iota_{X_0}^X\|_\I \colon X_0 \in \MFIN (X), L\in \COFIN (E)\} < \infty.$$

Also, $\|\cdot\|_{\I^{\max}}$ is the norm of $\I^{\max}$. As it was shown by K{\"u}rsten~\cite{Kur} and Pietsch~\cite{Pie2} for maximal Banach operator ideals, its turn out that a Banach Lipschitz operator ideal is maximal if and only if is ultrastable and regular. In order to see this, we will need some previous results.

First, we will show that a Lipschitz operator can be reconstructed from its {finite-dimensional parts, in an analogous way that is done for linear operators in \cite[Lemma~8.8.4]{Pie}. For this, take a Lipschitz operator $f \in \Lip_0(X,E)$ and consider the cartesian product $I:=\MFIN (X)\times \COFIN (E)$ endowed with the directed partial order defined as $(X_{1},L_{1})\leq (X_{2}, L_{2})$ iff $X_{1}\subseteq X_{2}$ and $L_{2}\subseteq L_{1}$. Choose an ultrafilter $\ult$ in $I$ containing the order filter. For each index $i\in I$, we denote by $X_{i}$ and $L_{i}$ the components of the pair $i$. For each $i\in I$, write $f_i=q_{L_i}^E\circ f\circ \iota_{X_i}^X$. We have $\Lip(f_{i})\leq \Lip(f)$ for all $i\in I$, and therefore the map $(f_{i})^{\ult}$ is defined. With this notation, we have

\begin{lemma}\label{lemma: reconstructing a lip operator from its finite parts} There are operators $g\in \Lip_0(X, (X_i)_{\ult})$ and $Q\in \L((E/L_i)_{\ult},E'')$ such that $\Lip(g)\leq 1$, $\|Q\|\leq 1$ and $J_E \circ f=Q \circ (f_i)^{\ult}\circ g$.
\end{lemma}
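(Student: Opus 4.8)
The plan is to build the two operators separately and then check the identity pointwise by testing against functionals in $E'$. Throughout I write $X_i, L_i$ for the components of an index $i \in I = \MFIN(X)\times \COFIN(E)$, and I use repeatedly that, since $\ult$ contains the order filter, any condition of the form ``$i \geq i_0$'' holds for $\ult$-almost every $i$. Recall also that $\Lip(f_i)\leq \Lip(f)$, so $(f_i)^\ult$ is already available.

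First I would define the Lipschitz map $g$. For $x\in X$ set $x_i = x$ whenever $x\in X_i$ and $x_i = 0$ otherwise, and put $g(x) = (x_i)_\ult \in (X_i)_\ult$. This family is bounded (by $d(x,0)$), so it represents a genuine element of the ultralimit, and $g(0)=(0_i)_\ult$ is the base point. Given $x,y\in X$, the index $(\{0,x,y\}, E)$ lies in $I$, so $x,y\in X_i$ for $\ult$-almost every $i$; on those indices $d(x_i,y_i)=d(x,y)$, whence $d(g(x),g(y))=\lim_\ult d(x_i,y_i)=d(x,y)$. Thus $g$ is an isometry, in particular $\Lip(g)\leq 1$. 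Moreover, for $\ult$-almost every $i$ we have $(f_i)^\ult(g(x)) = (f_i(x_i))_\ult = (q_{L_i}^E f(x))_\ult$, a computation I will reuse at the end.

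Next I would construct $Q$ by duality, which I expect to be the technical heart. The key facts are the isometric identification $(E/L)' = L^\perp \subset E'$ for each $L\in\COFIN(E)$, and that every $\phi\in E'$ lies in $(\ker\phi)^\perp$ with $\ker\phi\in\COFIN(E)$, hence, by the order-filter remark, in $(E/L_i)'$ for $\ult$-almost every $i$. Given a bounded family representing $(\xi_i)_\ult\in (E/L_i)_\ult$, I define $Q((\xi_i)_\ult)\in E''$ by $\langle Q((\xi_i)_\ult),\phi\rangle = \lim_\ult \langle \xi_i,\phi\rangle$, where for $\ult$-almost every $i$ the bracket makes sense because $\phi\in (E/L_i)'$. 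The estimate $\|Q\|\leq 1$ follows from $|\langle \xi_i,\phi\rangle|\leq \|\xi_i\|\,\|\phi\|_{(E/L_i)'} = \|\xi_i\|\,\|\phi\|_{E'}$ (the inclusion $L_i^\perp\hookrightarrow E'$ being isometric) after passing to the $\ult$-limit; linearity and independence of the chosen representative are immediate.

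Finally I would verify $J_E\circ f = Q\circ (f_i)^\ult\circ g$. Fixing $x\in X$ and $\phi\in E'$, the computation above gives $(f_i)^\ult(g(x)) = (q_{L_i}^E f(x))_\ult$, and for $\ult$-almost every $i$ one has $\phi\in L_i^\perp$, so $\langle q_{L_i}^E f(x),\phi\rangle = \langle f(x),\phi\rangle$. Taking the $\ult$-limit yields $\langle Q((q_{L_i}^E f(x))_\ult),\phi\rangle = \langle f(x),\phi\rangle = \langle J_E f(x),\phi\rangle$, and since $\phi$ is arbitrary the factorization follows. The only delicate points are showing $Q$ is well defined with norm at most one and tracking which index conditions are $\ult$-large; both are settled by the order-filter property together with the isometric duality $(E/L)' = L^\perp$.
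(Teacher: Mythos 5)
Your proposal is correct and follows essentially the same route as the paper: the map $g$ is defined identically (with the same order-filter argument showing it is an isometry), and your operator $Q$, built from the isometric identification $(E/L_i)'=L_i^{\perp}$ together with the observation that any $\phi\in E'$ annihilates $L_i$ for $\ult$-almost every $i$, coincides with the paper's $Q((z_i)_{\ult})(e')=\lim_{\ult}e'(y_i)$ defined via uniformly bounded lifts $y_i$ of $z_i$. Your phrasing makes the well-definedness of $Q$ and the bound $\|Q\|\leq 1$ slightly more self-contained, but the argument is the same.
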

\begin{proof}
Let $g\colon X\rightarrow (X_{i})_{\ult}$  be the map defined as $g(x)=(x_{i})_{\ult}$, where
\begin{align*}
x_{i}=\begin{cases}
x & \text{ if }x_{i}\in X_{i}\\
0 & \text{ if }x_{i}\notin X_{i}
\end{cases}
\end{align*}
The map $g$ is an isometry. Indeed, given $x, y\in X$, we have
\begin{align*}
d(g(x), g(y))=d((x_{i})_{\ult}, (y_{i})_{\ult})=\lim\limits_{\ult}d(x_{i},y_{i}).
\end{align*}
If we fix $i_{0}\in I$ such that $x, y \in X_{i_{0}}$ (for example, $i_{0}=\{0, x, y\}\times E$), then $d(x_{i},y_{i})=d(x,y)$ for all $i\geq i_{0}$ and consequently $d(g(x), g(y))=\lim\limits_{\ult}d(x_{i},y_{i})=d(x,y)$.

We define the bounded linear operator $Q\colon (E/L_{i})_{\ult}\rightarrow E^{''}$ as
\begin{align*}
 Q((z_{i})_{\ult}) (e')=\lim\limits_{\ult} e'(y_{i})
\end{align*}
where $e'\in E'$, for some $y_{i}\in E$ such that $q_{L_i}^{E}(y_i)=z_i$ and, $\|y_i\|\leq C$ for some $C>0$ for all $i\in I$ (this choice is possible because the $z_{i}$'s are uniformly bounded). As it was done in \cite[Lemma~8.8.4]{Pie}, the operator is well-defined and $\|Q\|\leq 1$. Now, let $x\in X$ be given. We have
\begin{align*}
Q(f_{i})^{\ult}g(x)=Q(f_{i}(x_{i}))_{\ult}=Q((q_{L_i}^{E}(f(x_{i})))_{\ult}).
\end{align*}
For $e'\in E'$, we have
\begin{align*}
 Q((q_{L_i}^{E}(f(x_{i})))_{\ult})(e')=\lim\limits_{\ult}e'(f(x_{i})).
\end{align*}
Finally, since for all $i>i_0$, $f(x_i)=f(x)$, we have that $\lim\limits_{\ult}e'(f(x_{i}))=\lim\limits_{\ult}e'(f(x))$, and the conclusion follows.
\end{proof}

The following result should be compared with \cite[Proposition~6.1]{Hei} and \cite[Proposition~1.5]{FloHun}.
\begin{lemma}
\label{lemma: Lipschitz version of prop 6.1 heinrich}
Let $(X_{i})_{i\in I}$ be a family of pointed metric spaces and $\ult$ be an ultrafilter in $I$. For $\{0\}\neq Y\in \MFIN((X_{i})_{\ult})$,  there are, for all $i\in I$, Lipschitz maps $f_i\in \Lip_0(Y,X_i)$ such that
\begin{enumerate}[\upshape a)]
\item For all $x\in Y$, $x=(f_i(x))_{i \in \ult}$.
\item There exists $C>0$ such that $\Lip(f_i)\leq C$ for all $i\in I$.
\item  For $\varepsilon>0$, there exists $I_{0}\in \ult$ such that for each $i\in I_{0}$, $f_i\colon Y\rightarrow f_i(Y)$ are $(1+\varepsilon)$-Lipschitz isomorphisms.
\end{enumerate}
\end{lemma}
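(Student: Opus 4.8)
The plan is to reconstruct the maps $f_i$ directly from chosen representatives of the finitely many points of $Y$, mirroring the linear statement \cite[Proposition~6.1]{Hei} but working with distances in place of norms. Write $Y=\{y^{(0)},y^{(1)},\dots,y^{(n)}\}$ with $y^{(0)}=0=(0_i)_{\ult}$ the base point, and for each $k$ fix a \emph{bounded} representative $(y_i^{(k)})_{i\in I}$ of the class $y^{(k)}\in(X_i)_{\ult}$, choosing $y_i^{(0)}=0_i$. Such bounded representatives are available because, by the very construction of the ultralimit, every element of $(X_i)_{\ult}$ admits a representative $(y_i^{(k)})_i$ with $\sup_i d(y_i^{(k)},0_i)<\infty$. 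Set $M:=\max_k\sup_i d(y_i^{(k)},0_i)<\infty$ and $\delta:=\min_{k\neq l}d(y^{(k)},y^{(l)})>0$, the latter being strictly positive since $Y$ is finite with distinct points. Then I would simply define, for each $i\in I$, the map $f_i\colon Y\to X_i$ by $f_i(y^{(k)})=y_i^{(k)}$; in particular $f_i(0)=0_i$, so $f_i\in\Lip_0(Y,X_i)$.

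Property a) is then immediate: for $x=y^{(k)}$ one has $(f_i(x))_{\ult}=(y_i^{(k)})_{\ult}=y^{(k)}=x$, the sequence $(f_i(x))_i$ being bounded by $M$ so that it genuinely represents an element of the ultralimit. For property b), for any $k\neq l$ the triangle inequality gives $d(f_i(y^{(k)}),f_i(y^{(l)}))=d(y_i^{(k)},y_i^{(l)})\leq d(y_i^{(k)},0_i)+d(0_i,y_i^{(l)})\leq 2M$, while $d(y^{(k)},y^{(l)})\geq\delta$; hence $\Lip(f_i)\leq 2M/\delta=:C$ for \emph{every} $i\in I$, which is the required uniform bound.

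For property c), fix $\varepsilon>0$. Since for each pair $k\neq l$ one has $\lim_{\ult}d(y_i^{(k)},y_i^{(l)})=d(y^{(k)},y^{(l)})>0$, the set
\[
I_{k,l}:=\Big\{i\in I\colon \tfrac{1}{1+\varepsilon}\,d(y^{(k)},y^{(l)})\leq d(y_i^{(k)},y_i^{(l)})\leq(1+\varepsilon)\,d(y^{(k)},y^{(l)})\Big\}
\]
belongs to $\ult$. As there are only finitely many pairs, $I_0:=\bigcap_{k\neq l}I_{k,l}\in\ult$. On $I_0$ the images $y_i^{(k)}$ are pairwise distinct (the lower bound being strictly positive for $k\neq l$), so $f_i$ is a bijection onto $f_i(Y)$, and the two-sided estimate shows that both $f_i$ and $f_i^{-1}$ are $(1+\varepsilon)$-Lipschitz; that is, $f_i\colon Y\to f_i(Y)$ is a $(1+\varepsilon)$-Lipschitz isomorphism. (If a multiplicative-distortion convention is preferred, one simply replaces $\varepsilon$ by $\sqrt{1+\varepsilon}-1$ in the definition of $I_{k,l}$.)

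The only genuinely delicate point is the uniform Lipschitz bound in b), which must hold for \emph{all} $i\in I$ and not merely on an $\ult$-large set; this is precisely where I use that the fixed representatives are bounded (keeping the numerator $\leq 2M$) together with the finiteness of $Y$ (keeping the denominator $\geq\delta>0$). Everything else reduces to a routine finite intersection of $\ult$-large sets together with the defining formula $d((x_i)_{\ult},(z_i)_{\ult})=\lim_{\ult}d(x_i,z_i)$ for the ultralimit metric.
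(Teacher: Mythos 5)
Your proposal is correct and follows essentially the same route as the paper: define $f_i$ on the finitely many points of $Y$ via fixed representatives, get the uniform Lipschitz bound from the boundedness of those representatives together with the positivity of the minimal distance in the finite set $Y$, and obtain c) by intersecting the finitely many $\ult$-large sets $I_{k,l}$ coming from $\lim_{\ult}d(y_i^{(k)},y_i^{(l)})=d(y^{(k)},y^{(l)})$. The only cosmetic difference is that the paper takes $C=\sup_{k\neq k'}\sup_i d(x_i^{k},x_i^{k'})/d(x^{k},x^{k'})$ directly, whereas you bound the numerator by $2M$ via the triangle inequality through the base point; both rest on the same fact that ultralimit classes admit bounded representatives.
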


\begin{proof}
Suppose that $Y=\{0, x^{1},\dots,x^{n}\}$ and choose representations $x^{k}=(x_{i}^{k})_{\ult}$ ($1\leq k\leq n$) and for the base point $0=(0_{i})_{\ult}$. Denote $Z_{i}=\{0_i,x_i^1,\ldots,x_i^n\}\in \MFIN(X_{i})$. For each $i\in I$, let $f_{i}\colon Y\rightarrow Z_{i}$ be the base point preserving map defined as $f_{i}(x^{k})=x_{i}^{k}$ for $k=1,\dots,n$. 
Note that for all $1\leq k,k'\leq n$,  $\sup\limits_{i\in I}d(x_{i}^{k},x_{i}^{k'})$ is finite. Denote by $C\colon=\sup\limits_{\substack{1\leq k,k'\leq n\\ k\neq k'}} \frac{\sup\limits_{i\in I}d(x_{i}^{k},x_{i}^{k'})}{d(x^{k},x^{k'})}$. Then, we have
\begin{align*}
d(f_{i}(x^{k}),f_{i}(x^{k'}))=d(x_{i}^{k},x_{i}^{k'})\leq \sup\limits_{i\in I}d(x_{i}^{k},x_{i}^{k'})\leq C \, d(x^{k},x^{k'}).
\end{align*}
Therefore, $\Lip(f_{i})\leq C$ for all $i\in I$ and $x^k=(f_i(x^k))_{\ult}$. This gives $a)$ and $b)$.

Now, we can consider the Lipschitz map $(f_{i})^{\ult}\colon (Y)_{\ult}\rightarrow (X_{i})_{\ult}$. For all $1\leq k,k'\leq n$, we have that 
$$
\begin{array}{rl}
d((f_i)^{\ult}(x^{k})_{\ult},(f_i)^{\ult}(x^{k'})_{\ult})=&d((f_{i}x^{k})_{\ult},(f_{i}x^{k'})_{\ult})\\
=&\lim\limits_{\ult}d(f_{i}(x^{k}), f_{i}(x^{k'}))\\
=&\lim\limits_{\ult}d(x_{i}^{k},x_{i}^{k'})\\
=&d(x^{k},x^{k'}).
\end{array}
$$
This implies that, fixing $k$ and $k'$, the set
$$
I_{k,k'}:=\{i\in I: (1+{\varepsilon})^{-1}d(x^{k},x^{k'})\leq d(f_{i}x^{k},f_{i}x^{k'})\leq (1+\varepsilon)d(x^{k},x^{k'})\}
$$
belongs to the ultrafilter $\ult$. Since any filter is closed under finite intersections, the set $I_0=\bigcap\limits_{1\leq k,k'\leq n} I_{k,k'}$ also belongs to $\ult$. For each $i \in I_0$, we have
$$
(1+\varepsilon)^{-1}d(x^{k},x^{k'})\leq d(f_{i}x^{k},f_{i}x^{k'})\leq (1+\varepsilon)d(x^{k},x^{k'})
$$
for all $1\leq k,k'\leq n$. Denoting by $Y_{i}\colon=\{0,f_i(x^1),\ldots,f_i(x^n)\}\in \MFIN(X_{i})$, we obtain that the $f_i\colon Y\rightarrow Y_i$ are Lipschitz isomorphisms with $\Lip(f_{i})\leq 1+\varepsilon$ and $\Lip(f_{i}^{-1})\leq 1+\varepsilon$. This completes the proof.
\end{proof}

Finally, we need the description of maximal Lipschitz operator ideals in terms of Lipschitz tensor norms. We now recall some concepts and results. For the general background and theory of Lispchitz tensor product we refer to  \cite{CPCDJVVV2}. For $X$ a pointed metric space and $E$ a Banach space,  the Lipschitz tensor product $X \boxtimes E$ is the linear span of all linears functionals $\delta_{(x,y)} \boxtimes e$ on $\Lip_0(X, E')$ of the form $(\delta_{(x,y)} \boxtimes e)(f)=(f(x)-f(y))(e)$, for $x,y \in X$ and $e\in E$. As usual, we denote by $X\boxtimes_{\alpha} E$ the Lipschitz tensor product endowed with a norm $\alpha$ and by $X\widehat{\boxtimes}_{\alpha} E$ its completion.  A Lipschitz tensor norm $\alpha$ is a norm on the class of Lipschitz tensor products $X\boxtimes E$ which satisfies the following properties:
\begin{enumerate}[\upshape i)]
\item For all $x, y\in X$ and $e \in E$, $\alpha(\delta_{(x,y)} \boxtimes e)=d(x,y)\|e\|$.
\item For all Lipschitz function $g \in X^{\#}$ and $e' \in E'$, the functional $g\boxtimes e'$ on $X\boxtimes E$ defined as $g\boxtimes e'(\delta_{(x,y)} \boxtimes e)=(g(x)-g(y)) e'(e)$ belongs to $(X\boxtimes_{\alpha} E)'$ and $\|g\boxtimes e'\|\leq \Lip(g) \|e'\|$.
\item For every $f\in \Lip_0(X,Y)$ and $T\in \L(E,F)$, the linear map $f\boxtimes T\colon X\boxtimes_{\alpha}E\rightarrow Y\boxtimes_{\alpha}F$ defined as $f\boxtimes T(\sum_{j=1}^{n} \delta_{x_j,y_j}\boxtimes e_j)=\sum_{j=1}^{n} \delta_{f(x_j),f(y_j)}\boxtimes T(e_j)$ is continuous and $\|f\boxtimes T\|\leq \Lip(f)\|T\|$.
\end{enumerate}

For $u\in X\boxtimes E$, the $\alpha$-norm of $u$ in $X\boxtimes E$ is denoted by $\alpha(u;X,E)$. A Lipschitz tensor norm is said to be finitely generated if for all $u \in X\boxtimes E$, we have 
$$
\alpha(u;X,E)=\inf\{\alpha(u;X_0,E_0)\colon X_0\in \MFIN(X), E_0\in \FIN(E), u \in X_0\boxtimes E_0\}.
$$

Following the Representation Theorem for Maximal Banach Lipschitz operator ideals \cite[Corollary~5.2]{CPCDJVVV}, a Banach Lipschitz operator ideal $\I$ is maximal if and only if there exists a finitely generated Lipschitz tensor norm $\alpha$ such that for every pointed metric space $X$ and Banach space $E$, the equality 
$$
\I(X,E)=(X  \widehat{\boxtimes}_{\alpha}  E')'\cap \Lip_0(X,E)
$$
hold isometrically, where for $f\in \I(X,E)$, its associated linear functional $\phi_f$ is given by
$$
\phi_f(u)=\sum_{j=1}^{n} e_j'(f(x_j)-f(y_j)),\text{ for }u=\sum_{j=1}^{n} \delta_{x_j,y_j} \boxtimes e'_j \in X\boxtimes E'
$$

Now, we are ready to prove the main result of this section. Our proof follows the steps of \cite[Theorem~3.2]{FloHun}, where it is proved an analogous result for homogeneous polynomials.

\begin{theorem}\label{Theo:_Reg_and_Ultrast} A Banach Lipschitz operator ideal $\I$ is maximal if and only if it is regular and ultrastable. 
\end{theorem}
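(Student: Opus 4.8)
The plan is to prove both implications by importing the corresponding linear arguments of K\"ursten and Pietsch, recast in the Lipschitz/tensor-norm language provided by the excerpt.

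\medskip

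\textbf{The forward implication (maximal $\Rightarrow$ regular and ultrastable).} Suppose $\I$ is maximal. For regularity I would argue directly from the defining supremum: the norm $\|\cdot\|_{\I^{\max}}$ is computed over compositions $q_L^E\circ f\circ \iota_{X_0}^X$ landing in the finite-codimensional quotients $E/L$, and passing to the bidual only adds the further isometric data already encoded in these quotients, so $\|J_E\circ f\|_{\I}=\|f\|_{\I}$; this gives $\I=\I^{\reg}$. For ultrastability I would invoke the tensor-norm representation from \cite[Corollary~5.2]{CPCDJVVV}: since $\I$ is maximal there is a finitely generated Lipschitz tensor norm $\alpha$ with $\I(X,E)=(X\widehat{\boxtimes}_\alpha E')'\cap\Lip_0(X,E)$ isometrically. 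Given operators $f_i\in\I(X_i,E_i)$ with $\sup_i\|f_i\|_\I<\infty$, I would show the associated functional $\phi_{(f_i)^{\ult}}$ on $(X_i)_{\ult}\boxtimes ((E_i)_{\ult})'$ is bounded by $\lim_{\ult}\|f_i\|_\I$. Because $\alpha$ is finitely generated, it suffices to estimate on a fixed finite piece $Y\boxtimes E_0'$ with $Y\in\MFIN((X_i)_{\ult})$; here Lemma~\ref{lemma: Lipschitz version of prop 6.1 heinrich} supplies, for each $\ep>0$, maps $f_i\colon Y\to X_i$ that are $(1+\ep)$-Lipschitz isomorphisms onto their image for $i$ in a set of $\ult$, letting me pull the tensor back to the levels $X_i\boxtimes E_0'$ and bound $\phi_{(f_i)^{\ult}}$ by $(1+\ep)^2\lim_{\ult}\|f_i\|_\I$. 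Letting $\ep\to0$ yields $\|(f_i)^{\ult}\|_\I\le\lim_{\ult}\|f_i\|_\I$, i.e.\ ultrastability.

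\medskip

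\textbf{The reverse implication (regular and ultrastable $\Rightarrow$ maximal).} Assume $\I$ is regular and ultrastable. The inclusion $\I\subset\I^{\max}$ is automatic, so the content is $\I^{\max}\subset\I$ with control of norms. Take $f\in\I^{\max}(X,E)$, so by definition $\sup\{\|q_L^E\circ f\circ\iota_{X_0}^X\|_\I\}=\|f\|_{\I^{\max}}<\infty$. I would run the index set $I=\MFIN(X)\times\COFIN(E)$ with the order ultrafilter $\ult$ of Lemma~\ref{lemma: reconstructing a lip operator from its finite parts}, forming the pieces $f_i=q_{L_i}^E\circ f\circ\iota_{X_i}^X\in\I(X_i,E/L_i)$, which satisfy $\sup_i\|f_i\|_\I=\|f\|_{\I^{\max}}$. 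Ultrastability then gives $(f_i)^{\ult}\in\I((X_i)_{\ult},(E/L_i)_{\ult})$ with $\|(f_i)^{\ult}\|_\I\le\|f\|_{\I^{\max}}$. Lemma~\ref{lemma: reconstructing a lip operator from its finite parts} furnishes $g\in\Lip_0(X,(X_i)_{\ult})$ and $Q\in\L((E/L_i)_{\ult},E'')$ with $\Lip(g)\le1$, $\|Q\|\le1$ and $J_E\circ f=Q\circ(f_i)^{\ult}\circ g$; by the ideal property $J_E\circ f\in\I(X,E'')$ with $\|J_E\circ f\|_\I\le\|f\|_{\I^{\max}}$. Finally, regularity of $\I$ converts this into $f\in\I(X,E)$ with $\|f\|_\I=\|J_E\circ f\|_\I\le\|f\|_{\I^{\max}}$, which combined with $\|f\|_{\I^{\max}}\le\|f\|_\I$ gives the isometric equality $\I=\I^{\max}$.

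\medskip

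\textbf{Main obstacle.} The delicate point is the ultrastability estimate in the forward direction. Transferring a bound on $\phi_{(f_i)^{\ult}}$ from the ultralimit level down to the individual levels $X_i$ is exactly where the Lipschitz geometry can fail to be rigid: unlike the linear case, one cannot simply restrict to subspaces, so the approximate $(1+\ep)$-isomorphisms of Lemma~\ref{lemma: Lipschitz version of prop 6.1 heinrich} and the finite generation of $\alpha$ must cooperate to control the distortion uniformly. Getting the two factors of $(1+\ep)$ to collapse cleanly as $\ep\to0$, while keeping the tensor-norm estimates compatible across the ultrafilter, is the technical heart of the argument.
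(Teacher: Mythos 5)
Your overall architecture matches the paper's proof: the reverse implication (regular $+$ ultrastable $\Rightarrow$ maximal) via the decomposition $f_i=q_{L_i}^E\circ f\circ\iota_{X_i}^X$, ultrastability applied to $(f_i)^{\ult}$, the factorization $J_E\circ f=Q\circ(f_i)^{\ult}\circ g$ from Lemma~\ref{lemma: reconstructing a lip operator from its finite parts}, and finally regularity, is exactly the paper's argument and is complete. For the forward implication, regularity of a maximal ideal is simply quoted from \cite[Corollary~5.3]{CPJVVV} in the paper; your sketch from the definition is plausible but would need the identification of cofinite-dimensional quotients of $E$ with those of $E''$ to be made precise.

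The genuine gap is in the ultrastability half of the forward implication. After reducing, by finite generation of $\alpha$, to a finite piece $Y\boxtimes M$ with $Y\in\MFIN\big((X_i)_{\ult}\big)$ and $M\in\FIN\big(((E_i)_{\ult})'\big)$, you only explain how to descend on the metric-space leg, via the $(1+\ep)$-isomorphisms of Lemma~\ref{lemma: Lipschitz version of prop 6.1 heinrich}. But the tensor also has a dual-space leg: the functionals $e_j'$ live in $((E_i)_{\ult})'$, which in general strictly contains $(E_i')_{\ult}$, so there is no canonical way to ``pull the tensor back to the levels $X_i\boxtimes E_0'$'' as written --- $M$ does not sit at any level $i$. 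The paper bridges this with the local duality of ultraproducts \cite[Theorem~7.3]{Hei}, which supplies a $(1+\ep)$-embedding $T$ of $M$ into $(E_i')_{\ult}$ preserving the relevant pairings with the finitely many points $f(x^j)-f(y^j)$, followed by \cite[Proposition~6.1]{Hei} to represent $T(M)$ by subspaces $N_i=S_i(T(M))\in\FIN(E_i')$; only then can one form $u_i\in Y_i\boxtimes N_i$, identify $\phi_{f_i}$ with a functional on $Y_i\boxtimes_\alpha N_i$ via the Representation Theorem applied to $\I(Y_i,E_i/^{\perp}N_i)$, and obtain the estimate $|\phi_f(u)|\le\lim_{\ult}\|f_i\|_{\I}\,\alpha(u_i;Y_i,N_i)$. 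Without this duality step the bound you aim for cannot be carried out, so you should add it to complete the argument.
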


\begin{proof}
We use the notation from Lemma~\ref{lemma: reconstructing a lip operator from its finite parts}. Assume that $\I$ is regular and ultrastable and take a pointed metric space $X$, a Banach space $E$ and $f\in \I^{\max}(X,E)$. Then $\|f\|_{\I^{\max}}= \sup\limits_{i\in I}\|f_{i}\|_{\I}<\infty$. Since $\I$ is ultrastable, $(f_{i})^{\ult}\in \I((X_{i})_{\ult}, (E/L_{i})_{\ult})$ and by Lemma \ref{lemma: reconstructing a lip operator from its finite parts}, we have $J_{E}\circ f\in \I(X,E'')$. Then $f\in \I^{\reg}(X,E)=\I(X,E)$, with
\begin{align*}
\|f\|_{\I}=\|J_{E}\circ f\|_{\I}\leq \|(f_{i})^{\ult}\|_{\I}\leq \lim\limits_{\ult}\|f_{i}\|_{\I}\leq \sup\limits_{i\in I}\|f_{i}\|_{\I}=\|f\|_{\I^{\max}}.
\end{align*}

Conversely, assume that $\I$ is maximal. By \cite[Corollary 5.3]{CPJVVV}, we know that it is regular, therefore we only have to see that $\I$ is ultrastable. By the Representation Theorem for Maximal Banach Lipschitz operator ideals, there exists a finitely generated Lipschitz tensor norm $\alpha$ such that $\I(X,E)=(X\widehat \boxtimes_{\alpha}E')'\cap \Lip_0(X,E)$. Let $(X_i)_{i \in I}$ be a family of pointed metric spaces, $(E_i)_{i \in I}$ a family of Banach spaces and for each $i\in I$ take a Lipschitz function $f_i\in \I(X_i,E_i)$ such that $\sup\limits_{i\in I}\|f_{i}\|_{\I}<\infty$. Take an ultrafilter $\ult$ of $I$ and denote $X=(X_{i})_{\ult}$, $E=(E_{i})_{\ult}$ and $f=(f_{i})^{\ult}$. Since $\alpha$ is finitely generated, in order to see that $f\in \I(X,E)$ with $\|f\|_{\I}\leq \lim\limits_{\ult}\|f_{i}\|_{\I}$, it is enough to see that for every $Y\in \MFIN(X)$ and $M\in \FIN(E')$ and $u\in Y\boxtimes M$, we have
$$
|\phi_f(u)|\leq \lim\limits_{\ult}\|f_{i}\|_{\ult} \alpha(u;Y,M).
$$

Let $Y\in \MFIN(X)$ and $M\in \FIN(E')$ and $u\in Y\boxtimes M$. There exist $x^1,\ldots x^n, y^1,\ldots,y^n \in Y$ and $e'_1\ldots,e'_n\in M$ such that $u=\sum_{j=1}^{n} \delta_{x^j,y^j}\boxtimes e'_j$. Take $\ep>0$ and consider the set $\{f(x^1),\ldots, f(x^n), f(y^1),\ldots f(y^n)\}\in~\MFIN(E)$. Using the local duality of ultraproducts \cite[Theorem 7.3]{Hei}, there exists $T\in \L(M, (E'_i)_{i\in \ult})$ which is $(1+\ep)$-isomorphism onto its image such that $T(e'_k) (f(x^k)-f(y^k))=e'_k(f(x^k)-f(y^k))$ for $k=1,\ldots, n$. Then we have
\begin{equation}\label{eq12}
|\phi_f(u)|=\sum_{j=1}^n e'_j(f(x^j)-f(y^j))=\sum_{j=1}^n T(e'_j)(f(x^j)-f(y^j)).
\end{equation}

 Since $T(M)\in \FIN((E'_i)_{i\in \ult})$ by \cite[Proposition~6.1]{Hei} (see also \cite[Proposition~1.5]{FloHun}), for all $i \in \I$ there exists $S_i\in \L(T(M),E'_i)$ such that $T(e'_j)=(S_i(T(e'_j)))_{\ult}$ for $j=1,\ldots,n$ with $\|S_i\|\leq 1$. Denote $N_{i}=S_{i}(T(M))\in \FIN(E_{i}')$. We have
\begin{equation}\label{eq2}
\sum_{j=1}^n T(e'_j)(f(x^j)-f(y^j))=\sum_{j=1}^n (S_i (T(e'_j)))_{\ult}(f(x^j)-f(y^j))
\end{equation}

By Lemma~\ref{lemma: Lipschitz version of prop 6.1 heinrich}, there exists $g_i\in \Lip_0(Y,X_i)$ such that $x^j=(g_i(x^j))_{\ult}$ and  $y^{j}=(g_i(^j))_{\ult}$ for $j=1,\ldots,n$, $\Lip(g_i)<\widetilde C$ for some $\widetilde C>0$ and $I_0\in \ult$ such that for all $i\in I_0$, $g_i$ are $(1+\ep)$-Lipschitz isomorphisms onto its image. Denote by $Y_i=g_i(Y)\in \MFIN(X_i)$.  With this, we get

\begin{equation}\label{eq3}
\begin{array}{rl}
\displaystyle  \sum_{j=1}^n (S_i (T(e'_j)))_{\ult}(f(x^j)-f(y^j)) =& \displaystyle   \sum_{j=1}^n (S_i (T(e'_j)))_{\ult}(f((g_i(x^j))_{\ult})-f((g_i(y^j))_{\ult}))\\
=&\displaystyle  \sum_{j=1}^n (S_i (T(e'_j)))_{\ult}(f_i(g_i(x^j)))_{\ult})-(f_i(g_i(y^j)))_{\ult}).\\
=&\displaystyle \lim_{\ult} \sum_{j=1}^{n} S_i(T(e'_j)) (f_i(g_i(x^j))-f_i(g_i(y^j))).
\end{array}
\end{equation}

On the one hand, by the Representation Theorem of Maximal Banach Lipschitz operator ideals, taking into account that $N_i\in \FIN(E_i)$, we have $\I(Y_i,E_{i}/^\perp N_i)=(Y_i\boxtimes_{\alpha} N_i)'$, where $^\perp N_i\in \COFIN(E_{i})$ is the pre-annihilator of $N_i$. On the other hand, for each $i\in I$, let $u_i=\sum_{j=1}^{n} \delta_{g_i(x^j),g_i(y^j)} \boxtimes S_i(T(e'_j)) \in Y_i\boxtimes N_i$. Considering $\phi_{f_i}$ as a functional in $ (Y_i\boxtimes_{\alpha} N_i)'$, we have
\begin{equation}\label{eq4}
\begin{array}{rl}
\left|\sum_{j=1}^{n} S_i(T(e'_j)) (f_i(g_i(x^j))-f_i(g_i(y^j)))\right|=&|\phi_{f_i}(u_i)|\leq\|\phi_{f_i}\| \alpha(u_i;Y_i,N_i)\\
=&\|q^{E_i}_{^\perp N_i}\circ f_i\circ \iota_{Y_i}^{X_i}\|_{\I(Y_i,E_i/^\perp N_i)} \alpha(u_i;Y_i,N_i)\\
\leq & \|f_i\|_\I  \alpha(u_i;Y_i,N_i).
\end{array}
\end{equation}
Using the map $g_i\boxtimes (S_i\circ T)\colon Y\boxtimes E\rightarrow Y_i\boxtimes N_i$, we have that $g_i\boxtimes (S_i\circ T)(u)=u_i$, which implies that 
$$
\alpha(u_i;Y_i,N_i)\leq \Lip(g_i) \|S_i\circ T\|\alpha(u;X,E)\leq (1+\ep) \widetilde C \alpha(u;X,E)
$$
and therefore the limit $L=\lim\limits_{\ult}\alpha(u_i;Y_i,N_i)$ exists. Combining \eqref{eq12}, \eqref{eq2} ,\eqref{eq3} and \eqref{eq4} we obtain
$$
|\phi_f(u)|\leq \lim_{\ult} \|f_i\|_\I  \alpha(u_i;Y_i,N_i)=\lim_{\ult} \|f_i\|_\I \lim_{\ult} \alpha(u_i;Y_i,N_i). 
$$

By definition of convergence along an ultrafilter, there exists $I_1\in \ult$ such that for all $i\in I_1$, $|L-\alpha(u_i;Y_i,N_i)|\leq \ep$. Thus, taking $I_2=I_0\cap I_1 \in \ult$, we finally obtain that for all $i\in I_2, \ L\leq \left((1+\ep)^2 \alpha(u;X,E)+\ep\right)$ and then
$$
|\phi_f(u)|\leq \lim_{\ult} \|f_i\|_\I L\leq \lim_{\ult} \|f_i\|_\I \left((1+\ep)^2 \alpha(u;X,E)+\ep\right).
$$
Since $\ep>0$ was arbitrary, we obtain our result.
\end{proof}
\begin{remark}\label{remark:ultra=reg} Note that the first part of the proof shows that $\I^{\reg}=\I^{\max}$ for any ultrastable Banach Lipschitz operator ideal $\I$. This result in the case of Banach linear operators can be found in \cite[Proposition~8.8.6]{Pie}
\end{remark}

It is clear that if $\A$ is regular, then $\A \circ \Lip_0$ also is. Then, Proposition~\ref{prop:ultrastable a derecha}, together with Theorem~\ref{Theo:_Reg_and_Ultrast}, gives an alternative proof of \cite[Proposition~4.1]{TV}, where was shown that if $\A$ is maximal Banach operator ideal, then $\A \circ \Lip_0$ is a maximal Banach Lipschitz operator ideal.

To finish this section, we apply our results in order to characterize maximal hulls of some Banach Lipschitz opertor ideals. 

\begin{corollary}\label{coro: lipo amax lipo reg is maximal} 
If $\I$ is an ultrastable Banach Lipschitz operator ideal, then $(\Lip_0 \circ \I)^{\max}=(\Lip_0 \circ \I)^{\reg}$. In particular, $(\Lip_0 \circ \A^{\max} \circ \Lip_0)^{\max}=(\Lip_0 \circ \A^{\max}\circ \Lip_0)^{\reg}$ for any Banach operator ideal $\A$.
\end{corollary}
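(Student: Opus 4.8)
The plan is to deduce the Corollary directly from Theorem~\ref{Theo:_Reg_and_Ultrast} together with the two ultrastability-production results. The key observation is that Theorem~\ref{Theo:_Reg_and_Ultrast} characterizes maximality as the conjunction of regularity and ultrastability, and Remark~\ref{remark:ultra=reg} records the sharper fact extracted from the first half of that proof: for an \emph{ultrastable} Banach Lipschitz operator ideal $\J$, one already has $\J^{\reg}=\J^{\max}$. So the whole corollary reduces to checking that $\Lip_0\circ \I$ is ultrastable whenever $\I$ is, and this is exactly the content of Proposition~\ref{prop:ultrastable a izquierda}.

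First I would establish the general statement. Given an ultrastable Banach Lipschitz operator ideal $\I$, apply Proposition~\ref{prop:ultrastable a izquierda} to conclude that $\Lip_0\circ \I$ is ultrastable. Then invoke Remark~\ref{remark:ultra=reg} with $\J=\Lip_0\circ \I$ to obtain
$$
(\Lip_0\circ \I)^{\reg}=(\Lip_0\circ \I)^{\max},
$$
which is the first assertion.

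For the ``in particular'' part I would specialize to $\I=\A^{\max}\circ \Lip_0$ for an arbitrary Banach operator ideal $\A$. The point is that this ideal is ultrastable: since $\A^{\max}$ is ultrastable (a maximal Banach operator ideal is ultrastable by K\"ursten--Pietsch), Proposition~\ref{prop:ultrastable a derecha} gives that $\A^{\max}\circ \Lip_0$ is ultrastable as well. Applying the general statement just proved to this $\I$ yields
$$
(\Lip_0\circ \A^{\max}\circ \Lip_0)^{\max}=(\Lip_0\circ \A^{\max}\circ \Lip_0)^{\reg},
$$
as claimed. The only mild subtlety to address is that $\Lip_0\circ(\A^{\max}\circ \Lip_0)$ is genuinely the ideal $\Lip_0\circ \A^{\max}\circ \Lip_0$ appearing in the statement, so that the associativity of the composition constructions is used implicitly; this is a bookkeeping matter rather than a real obstacle. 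I expect no serious difficulty here: the work is entirely in assembling previously proven facts, with the main conceptual input being Remark~\ref{remark:ultra=reg}, which already does the heavy lifting of identifying the regular hull with the maximal hull under ultrastability.
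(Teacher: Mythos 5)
Your proposal is correct and follows essentially the same route as the paper: apply Proposition~\ref{prop:ultrastable a izquierda} to get that $\Lip_0\circ\I$ is ultrastable, conclude via Remark~\ref{remark:ultra=reg}, and handle the ``in particular'' case by noting that $\A^{\max}\circ\Lip_0$ is ultrastable thanks to Proposition~\ref{prop:ultrastable a derecha} and the K\"ursten--Pietsch ultrastability of maximal linear ideals. No gaps; the associativity point you flag is indeed only bookkeeping.
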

\begin{proof}
First note that by Proposition~\ref{prop:ultrastable a derecha}, $\A^{\max}\circ \Lip_0$ is ultrastable. Now, if $\I$ is ultrastable, by Proposition~\ref{prop:ultrastable a izquierda} we have that $(\Lip_0 \circ \I)$ is ultrastable, and the result follows from Remark~\ref{remark:ultra=reg}. 
\end{proof}

Farmer and Johnson \cite{FJ} introduced the ideal of Banach Lipschitz $p$-integral operators (see also \cite[p.5275]{CZ}). We are in condition to see that it is a maximal Banach Lipschitz operator ideal.
\begin{corollary}
The Banach Lipschitz ideal of Lipschitz $p$-integral operators $\I_{p}^{\Lip}$ is maximal.
\end{corollary}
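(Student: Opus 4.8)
The plan is to reduce the maximality of $\I_p^{\Lip}$ to the fact, recorded in the remark following Theorem~\ref{Theo:_Reg_and_Ultrast} (i.e.\ \cite[Proposition~4.1]{TV}), that $\A \circ \Lip_0$ is maximal whenever $\A$ is a maximal Banach operator ideal. The whole argument thus rests on identifying $\I_p^{\Lip}$ with the composition ideal $\mathcal{I}_p \circ \Lip_0$, where $\mathcal{I}_p$ denotes the classical (linear) Banach operator ideal of $p$-integral operators.

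To carry this out, I would first unwind the Farmer--Johnson definition: a Lipschitz map $f \in \I_p^{\Lip}(X,E)$ is one for which $J_E \circ f$ admits a factorization $J_E \circ f = S \circ \iota_{\infty,p} \circ R$, with $R \in \Lip_0(X, L_\infty(\mu))$, where $\iota_{\infty,p}\colon L_\infty(\mu)\to L_p(\mu)$ is the formal inclusion and $S \in \L(L_p(\mu), E'')$. Writing $R = L_R \circ \delta_X$ and $f = L_f \circ \delta_X$, and using that the range of $\delta_X$ spans a dense subspace of $\Ae(X)$, this factorization is equivalent to the identity $J_E \circ L_f = S \circ \iota_{\infty,p} \circ L_R$ at the level of linearizations; the right-hand side is exactly a linear $p$-integral factorization of $J_E\circ L_f$, so by definition of the $p$-integral ideal this gives $L_f \in \mathcal{I}_p(\Ae(X),E)$. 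By the characterization of composition ideals through linearizations \cite[Proposition~3.2]{ARSPY}, the condition $L_f \in \mathcal{I}_p$ is equivalent to $f \in \mathcal{I}_p \circ \Lip_0$, which yields the isometric identity $\I_p^{\Lip} = \mathcal{I}_p \circ \Lip_0$.

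With the identification in hand, the conclusion is immediate. The ideal $\mathcal{I}_p$ is maximal, hence (by the theorem of K\"ursten and Pietsch, \cite{Kur,Pie2}) both regular and ultrastable; consequently $\mathcal{I}_p \circ \Lip_0$ is regular, and it is ultrastable by Proposition~\ref{prop:ultrastable a derecha}, so it is maximal by Theorem~\ref{Theo:_Reg_and_Ultrast}. Equivalently, one simply quotes the remark after that theorem, which already records maximality of $\A \circ \Lip_0$ for maximal $\A$.

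I expect the genuine work to be concentrated in the identification $\I_p^{\Lip} = \mathcal{I}_p \circ \Lip_0$ rather than in the maximality step, which is then purely formal. The delicate points there are the bookkeeping of the bidual $E''$ --- so that the factorization of $J_E \circ L_f$ through $L_p(\mu)\to E''$ is correctly matched with membership of the $E$-valued operator $L_f$ in $\mathcal{I}_p$, where the regularity of $\mathcal{I}_p$ intervenes --- and verifying that the correspondence between the Lipschitz factorizations of $f$ and the linear factorizations of $L_f$ preserves the relevant infima, so that the identity holds \emph{isometrically} and the maximal norm of $\I_p^{\Lip}$ coincides with the Lipschitz $p$-integral norm.
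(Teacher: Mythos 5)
There is a genuine gap, and it lies exactly where you predicted the ``genuine work'' would be: the identification of $\I_p^{\Lip}$ with a known ideal. You have mis-stated the Farmer--Johnson/Chen--Zheng definition. A Lipschitz $p$-integral operator $f\colon X\to E$ is one admitting a factorization $J_E\circ f=B\circ \iota_{\infty,p}\circ A$ in which \emph{both} $A\colon X\to L_\infty(\mu)$ and $B\colon L_p(\mu)\to E''$ are Lipschitz maps; the left factor $B$ is not assumed linear. Consequently the correct identification is $\I_p^{\Lip}=(\Lip_0\circ \I_p\circ \Lip_0)^{\reg}$ (which is what the paper uses), not $\I_p^{\Lip}=\I_p\circ \Lip_0$. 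These two ideals genuinely differ: for a finite-dimensional space $E$, the identity $id_E$ is finite rank, hence $p$-integral with a linear factorization $id_E=S\circ \iota_{\infty,p}\circ R$, so $\delta_E=(\delta_E\circ S)\circ \iota_{\infty,p}\circ R$ exhibits $\delta_E\colon E\to \Ae(E)$ as Lipschitz $p$-integral; but $\delta_E\in \I_p\circ\Lip_0(E,\Ae(E))$ would force its linearization $id_{\Ae(E)}$ to be $p$-integral, which is impossible since $\Ae(E)$ is infinite-dimensional and non-reflexive while $p$-integral identities factor (after composing with the canonical embedding) through the reflexive space $L_p(\mu)$ for $1<p<\infty$, and $\I_1\subset\I_p$ handles $p=1$. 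So the equality $\I_p^{\Lip}=\I_p\circ\Lip_0$ you build the argument on is false, and the step reducing the Lipschitz factorization of $J_E\circ f$ to a \emph{linear} $p$-integral factorization of $J_E\circ L_f$ cannot be carried out.

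Your maximality machinery is the right one, but it must be applied to the right object: once one knows $\I_p^{\Lip}=(\Lip_0\circ \I_p\circ\Lip_0)^{\reg}$, the conclusion follows from Corollary~\ref{coro: lipo amax lipo reg is maximal}, which says that for a maximal $\A$ the regular hull of $\Lip_0\circ\A\circ\Lip_0$ coincides with its maximal hull (via ultrastability of $\Lip_0\circ\A^{\max}\circ\Lip_0$ and Theorem~\ref{Theo:_Reg_and_Ultrast}). Quoting only the maximality of composition ideals $\A\circ\Lip_0$ does not suffice here, precisely because $\Lip_0\circ\I_p\circ\Lip_0$ is not of composition type (cf.\ the last proposition of Section~\ref{Sec: Restriction} together with the fact that $id_{\Ae(E)}\notin\I_p$).
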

\begin{proof}
Fix a pointed metric space $X$, a Banach space $E$ and a Lipschitz operator $f\colon X\rightarrow E$. Then $f$ is a Lipschitz $p$-integral operator if and only if $J_E\circ f \in \Lip_0\circ \I_p \circ \Lip_0(X, E'')$, where $\I_p$ is the maximal operator ideal of $p$-integral operators. This implies that $\I_{p}^{\Lip}=(\Lip_{0}\circ \I_{p}\circ \Lip_{0})^{\reg}$ and the result follows from Corollary \ref{coro: lipo amax lipo reg is maximal}.
\end{proof}

\section{About $\Lip_{0}\circ \A^{\max}\circ \Lip_{0}$}\label{Sec: Restriction}

The main goal of this section is to study which operator ideal do $\Lip_0\circ \A \circ \Lip_0$ and its maximal hull extend, in the case when $\A$ is a maximal Banach operator ideal. We start with some preliminary results which have their own interest.

\begin{lemma} \label{lemma: linear restriction of a maximal lipschitz ideal is maximal}
Let $\I$ be a maximal Banach Lipschitz operator ideal. Then $\I\cap \L$ is a maximal Banach operator ideal.
\end{lemma}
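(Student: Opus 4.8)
The plan is to use the characterization of maximal Banach operator ideals due to Kürsten and Pietsch, namely that a Banach operator ideal is maximal if and only if it is regular and ultrastable. Since we have just proved the Lipschitz analogue (Theorem~\ref{Theo:_Reg_and_Ultrast}), the idea is to transfer these two properties from the maximal Banach Lipschitz operator ideal $\I$ to its linear restriction $\A := \I \cap \L$. First I would note that $\A$ is indeed a Banach operator ideal: the ideal and linearity axioms are inherited directly from those of $\I$, since the composition of linear operators is linear and $\I$ satisfies the ideal property with Lipschitz maps on both sides (in particular with linear operators). The norm $\|\cdot\|_{\A} := \|\cdot\|_{\I}$ restricted to linear operators makes each component a Banach space, because $\A(E,F) = \I(E,F) \cap \L(E,F)$ is a closed subspace of the Banach space $\I(E,F)$ (closedness follows since $\Lip(\cdot) \le \|\cdot\|_{\I}$ forces $\I$-convergence to imply uniform, hence Gateaux-limit, control so that the limit stays linear).

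Next I would verify ultrastability of $\A$. Given linear operators $S_i \in \A(E_i,F_i) = \I(E_i,F_i) \cap \L(E_i,F_i)$ with $\|S_i\|_{\I}$ uniformly bounded, the ultraproduct map $(S_i)^{\ult}$ is linear (as recorded in the background section, ultraproducts of linear operators between Banach spaces are linear) and, since $\I$ is ultrastable, $(S_i)^{\ult} \in \I((E_i)_{\ult},(F_i)_{\ult})$ with $\|(S_i)^{\ult}\|_{\I} \le \lim_{\ult}\|S_i\|_{\I}$. Hence $(S_i)^{\ult} \in \A((E_i)_{\ult},(F_i)_{\ult})$ with the correct norm bound, which is exactly ultrastability of $\A$. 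Then I would check regularity: if $T \in \L(E,F)$ satisfies $J_F \circ T \in \A(E,F'')$, then $J_F \circ T \in \I(E,F'')$, and since $\I$ is maximal it is regular (by Theorem~\ref{Theo:_Reg_and_Ultrast}, or by \cite[Corollary~5.3]{CPJVVV}), so $T \in \I(E,F)$ with $\|T\|_{\I} = \|J_F \circ T\|_{\I}$; as $T$ is linear this gives $T \in \A(E,F)$ with $\|T\|_{\A} = \|J_F \circ T\|_{\A}$, i.e.\ $\A$ is regular.

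Having established that $\A$ is both regular and ultrastable, I would invoke the classical Kürsten--Pietsch theorem (stated in the excerpt as the linear motivation for Theorem~\ref{Theo:_Reg_and_Ultrast}, and available in \cite{Kur,Pie2,Pie}) to conclude that $\A = \I \cap \L$ is maximal. The main obstacle I anticipate is the foundational verification that $\I \cap \L$ genuinely forms a Banach operator ideal with the inherited norm---in particular the completeness of each component $\A(E,F)$ and the fact that the $\I$-norm limit of linear operators remains linear. This requires using the inequality $\Lip(f) \le \|f\|_{\I}$ to pass from $\I$-norm convergence to pointwise convergence, whence the limit inherits additivity and homogeneity; everything else (ideal property, behaviour of $Id_{\mathbb R}$) transfers transparently from $\I$. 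Once this is in place, the ultrastability and regularity arguments are routine applications of the hypotheses, and the theorem follows immediately from the linear Kürsten--Pietsch characterization.
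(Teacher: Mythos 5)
Your argument is correct, but it takes a genuinely different route from the paper. You pass through the K\"ursten--Pietsch characterization: you transfer ultrastability and regularity from $\I$ to $\I\cap\L$ and then invoke the classical linear theorem (regular $+$ ultrastable $\Rightarrow$ maximal). All three steps check out --- linearity of ultraproducts of linear operators gives ultrastability of $\I\cap\L$, regularity of $\I$ (which holds for any maximal Lipschitz ideal) restricts to $\I\cap\L$ because $J_F\circ T$ is linear whenever $T$ is, and the completeness of the components follows from $\Lip(\cdot)\le\|\cdot\|_{\I}$ as you say; indeed the paper itself records your ultrastability observation later, in the proof of Proposition~\ref{prop: Lip I reg cap L is equal to Lip I cap L max, where I is ultrastable}. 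The paper's own proof is shorter and more elementary: it works directly with the two maximal-hull norms, showing $\|T\|_{\I^{\max}}\le\|T\|_{(\I\cap\L)^{\max}}$ by factoring the inclusion $\iota_{X_0}^{E}$ of a finite subset $X_0$ through its linear span, $\iota_{X_0}^{E}=\iota_{\langle X_0\rangle}^{E}\circ\iota_{X_0}^{\langle X_0\rangle}$, and then using $\I^{\max}=\I$ to conclude $(\I\cap\L)^{\max}\subset\I\cap\L$. What your approach buys is conceptual uniformity (everything reduces to the regular-and-ultrastable criterion), at the cost of leaning on the harder direction of Theorem~\ref{Theo:_Reg_and_Ultrast} (maximal $\Rightarrow$ ultrastable, whose proof needs the representation theorem and local duality of ultraproducts) together with the external linear K\"ursten--Pietsch theorem; the paper's comparison of hulls uses only the definitions and is self-contained.
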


\begin{proof}
Let $E, F$ be Banach spaces and $T\in (\I\cap \L)^{\max}(E,F)$. By definition, we have
\begin{align*}
\|T\|_{(\I\cap \L)^{\max}}=\sup \{\|q_{F_{0}}^{F}\circ T\circ \iota_{E_{0}}^{E}\|_{\I\cap \L}: E_{0}\in \text{FIN}(E), F_{0}\in \text{COFIN}(F)\}
\end{align*}
where $\iota_{E_{0}}^{E}:E_{0}\hookrightarrow E$ is the inclusion map and $q_{F_{0}}^{F}:F\longrightarrow F/F_{0}$ is the quotient map. For each finite subset $X_{0}$ of $E$ containing the origin, the inclusion map $\iota_{X_{0}}^{E}:X_{0}\hookrightarrow E$ can be factorized as $\iota_{\langle X_{0}\rangle}^{E}\circ \iota_{X_{0}}^{\langle X_{0}\rangle}$, where $\langle X_{0}\rangle$ is the linear span of $X_{0}$. So, for each $X_{0}\in \text{MFIN}(E)$ and $F_{0}\in\text{COFIN}(F)$, we have
\begin{align*}
\|q_{F_{0}}^{F}\circ T\circ \iota_{X_{0}}^{E}\|_{\I}=\|q_{F_{0}}^{F}\circ T\circ \iota_{\langle X_{0}\rangle}^{E}\circ \iota_{X_{0}}^{\langle X_{0}\rangle}\|_{\I}\leq \|q_{F_{0}}^{F}\circ T\circ \iota_{\langle X_{0}\rangle}^{E}\|_{\I\cap \L}
\end{align*}
We infer that
\begin{align*}
\sup\{\|q_{F_{0}}^{F}\circ T\circ \iota_{X_{0}}^{E}\|_{\I}:X_{0}\in \text{MFIN}(E), F_{0}\in\text{COFIN}(F)\}\leq \|T\|_{(\I\cap \L)^{\max}}
\end{align*}
and therefore $T\in \I^{\max}(E,F)=\I(E,F)$ with $\|T\|_{\I\cap \L}\leq \|T\|_{(\I\cap \L)^{\max}}$.
\end{proof}

As a consequence, we have

\begin{prop}
\label{prop: Lip I reg cap L is equal to Lip I cap L max, where I is ultrastable}
Let $\I$ be an ultrastable Banach Lipschitz operator ideal, then

$$((\Lip_0\circ \I)\cap \L)^{\max}=(\Lip_0\circ \I)^{\reg}\cap \L$$
\end{prop}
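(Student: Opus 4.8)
The plan is to abbreviate $\J:=\Lip_0\circ\I$ and to reduce the statement to facts already available for $\J$. First I would note that, since $\I$ is ultrastable, Proposition~\ref{prop:ultrastable a izquierda} yields that $\J$ is ultrastable; hence by Remark~\ref{remark:ultra=reg} we have $\J^{\reg}=\J^{\max}$, so the right-hand side of the claimed identity is exactly $\J^{\max}\cap\L$. Because the maximal hull of any Banach Lipschitz operator ideal is maximal, Lemma~\ref{lemma: linear restriction of a maximal lipschitz ideal is maximal} shows that $\J^{\max}\cap\L$ is a maximal Banach (linear) operator ideal. The goal is therefore to prove $(\J\cap\L)^{\max}=\J^{\max}\cap\L$ isometrically.

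For the inclusion $(\J\cap\L)^{\max}\subset\J^{\max}\cap\L$: from $\J\subset\J^{\max}$ I get $\J\cap\L\subset\J^{\max}\cap\L$ (with the corresponding reverse inequality of ideal norms), and taking linear maximal hulls, together with the maximality of $\J^{\max}\cap\L$ noted above, gives $(\J\cap\L)^{\max}\subset(\J^{\max}\cap\L)^{\max}=\J^{\max}\cap\L$.

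The substantial direction is $\J^{\max}\cap\L\subset(\J\cap\L)^{\max}$. Fix a linear operator $T\in\J^{\max}\cap\L(E,F)$ and, following the definition of the linear maximal hull, estimate $\|q_{F_0}^{F}\circ T\circ\iota_{E_0}^{E}\|_{\J\cap\L}$ for $E_0\in\FIN(E)$ and $F_0\in\COFIN(F)$. Writing $S:=q_{F_0}^{F}\circ T\circ\iota_{E_0}^{E}\colon E_0\to F/F_0$, the decisive observation is that the codomain $F/F_0$ is finite-dimensional, hence reflexive, so $J_{F/F_0}$ is an isometric isomorphism and consequently $\|S\|_{\J}=\|S\|_{\J^{\reg}}$; combining this with $\J^{\reg}=\J^{\max}$ gives $\|S\|_{\J\cap\L}=\|S\|_{\J}=\|S\|_{\J^{\max}}$. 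Applying the Lipschitz ideal inequality to $S=q_{F_0}^{F}\circ T\circ\iota_{E_0}^{E}$, and using $\Lip(\iota_{E_0}^{E})=1$ and $\|q_{F_0}^{F}\|\le 1$, we obtain $\|S\|_{\J^{\max}}\le\|T\|_{\J^{\max}}$. Taking the supremum over all $E_0\in\FIN(E)$ and $F_0\in\COFIN(F)$ then shows $T\in(\J\cap\L)^{\max}(E,F)$ with $\|T\|_{(\J\cap\L)^{\max}}\le\|T\|_{\J^{\max}}=\|T\|_{\J^{\max}\cap\L}$, which together with the previous paragraph yields the isometric equality.

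The main obstacle is the passage in the last paragraph from the linear maximal hull—built from finite-dimensional \emph{subspaces} of the domain—to the Lipschitz $\J$-norm, whose maximal description rests on finite metric \emph{subsets}. This gap is bridged precisely by the finite-dimensionality of the codomain $F/F_0$, which forces the regular and maximal hull norms of $S$ to collapse back onto $\|S\|_{\J}$ via $\J^{\reg}=\J^{\max}$, i.e.\ via the ultrastability of $\J$. Beyond this, I expect only the routine bookkeeping of checking that $S$ genuinely lies in $\J$ (being a finite-rank operator) and that the ideal-norm estimates are consistent.
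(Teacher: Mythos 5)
Your argument is correct, but it follows a different path from the paper's. The paper's proof stays entirely on the linear side: it first observes that the restriction $(\Lip_0\circ\I)\cap\L$ of the ultrastable Lipschitz ideal $\Lip_0\circ\I$ is an \emph{ultrastable Banach (linear) operator ideal}, then applies the classical K{\"u}rsten--Pietsch theorem (\cite[Proposition~8.8.6]{Pie}) to get $((\Lip_0\circ\I)\cap\L)^{\max}=((\Lip_0\circ\I)\cap\L)^{\reg}$, and finishes with the essentially tautological identity $((\Lip_0\circ\I)\cap\L)^{\reg}=(\Lip_0\circ\I)^{\reg}\cap\L$ (regular hulls commute trivially with restriction to $\L$ because $J_F\circ T$ is linear iff $T$ is). You instead work at the Lipschitz level: you use Remark~\ref{remark:ultra=reg} to replace $(\Lip_0\circ\I)^{\reg}$ by $(\Lip_0\circ\I)^{\max}$ and then prove by hand that the \emph{maximal} hull commutes with restriction to $\L$, i.e.\ $((\Lip_0\circ\I)\cap\L)^{\max}=(\Lip_0\circ\I)^{\max}\cap\L$; your key device for the nontrivial inclusion --- exploiting the reflexivity of the finite-dimensional quotient $F/F_0$ to collapse $\|S\|_{\J^{\max}}=\|S\|_{\J^{\reg}}$ back to $\|S\|_{\J}$ --- is sound, as is the appeal to Lemma~\ref{lemma: linear restriction of a maximal lipschitz ideal is maximal} and to monotonicity of maximal hulls for the easy inclusion. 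The paper's route is shorter because the commutation it needs (for regular hulls) is immediate, whereas the commutation you establish (for maximal hulls) requires genuine work; on the other hand, your argument yields the intermediate identity $((\Lip_0\circ\I)\cap\L)^{\max}=(\Lip_0\circ\I)^{\max}\cap\L$ explicitly, which is of independent interest and does not appear as such in the paper.
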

\begin{proof}
First, note that if $\J$ is any ultrastable Banach Lipschitz operator ideal, then $\J\cap\L$ is an ultrastable Banach operator ideal. Combining this observation with Proposition~\ref{prop:ultrastable a izquierda}, we obtain that $(\Lip_0\circ \I)\cap \L$ is an ultrastable Banach operator ideal. Then we have 
$$((\Lip_0\circ \I)\cap \L)^{\max}=((\Lip_0\circ \I)\cap \L)^{\reg}=(\Lip_0\circ \I)^{\reg}\cap \L,$$
and the proof is complete.
\end{proof}

The principal tool that we will use to see what Banach operator ideal does $\Lip_0 \circ \A \circ \Lip_0$ extend is the Maurey--Johnson--Schechtman result (Theorem~\ref{thm: JMS lemma about differentiability}). In general, given a linear operator $T \in \Lip_0 \circ \A \circ \Lip_0(E,F)$, we desire a suitable factorization of $T$ through Gateaux differentiable Lipschitz maps. In many cases, that suitable factorization can be obtained using translations. So it is convenient to introduce the following definition. In what follows, for $f\in \Lip_0(E,F)$ and $x_0\in E$, we denote by $f^{x_0}(x):=f(x+x_0)-f(x_0)$.
\begin{definition}
A Banach Lipschitz operator ideal $\I$ is said to be invariant under translations if for any Banach spaces $E$ and $F$, $x_0\in E$ and $f\in \I(E,F)$, the function $f^{x_0}$ belongs to $\I(E,F)$ an $\|f^{x_0}\|_{\I}\leq \|f\|_{\I}$.
\end{definition}
\begin{example} Let $\A$ be a Banach operator ideal, then $\A\circ \Lip_0$ is invariant under translations.
\end{example}
\begin{proof}
Take Banach spaces $E$ and $F$, $x_0\in E$ and $f\in \A\circ \Lip_0(E,F)$. Consider a factorization $f=T\circ g$ where $g\in \Lip_0(E,G)$ and $T\in \A(G,F)$ for some Banach space $G$. Since $T$ is linear, we have that $f^{x_0}=T\circ g^{x_0}$, implying that $f^{x_0}\in \A\circ \Lip_0(E,F)$.
\end{proof}

\begin{prop}
Let $\I$ be a Banach Lipschitz operator ideal invariant under translations. Then  $\Lip_0\circ \I$ is invariant under translations.
\end{prop}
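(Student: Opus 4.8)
The plan is to take an arbitrary $f\in \Lip_0\circ \I(E,F)$ (with $E$ a Banach space so that translations make sense), fix $x_0\in E$, and produce for the translate $f^{x_0}$ a factorization of the same shape as the one defining $\Lip_0\circ \I$, namely an $\I$-map followed by a Lipschitz map, whose norm is controlled by that of the original factorization of $f$. So first I would choose a factorization $f=g_2\circ g_1$ with $g_1\in \I(E,G)$ and $g_2\in \Lip_0(G,F)$ for some Banach space $G$, and set $y_0:=g_1(x_0)\in G$.

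The heart of the argument is the identity $f^{x_0}=g_2^{y_0}\circ g_1^{x_0}$, where $g_2^{y_0}(y):=g_2(y+y_0)-g_2(y_0)$ denotes the translate of the (non-vanishing-at-zero) map $g_2$ by $y_0$. Indeed, for $x\in E$ one has $g_1(x+x_0)=g_1^{x_0}(x)+y_0$, whence
\[
f^{x_0}(x)=g_2\big(g_1(x+x_0)\big)-g_2\big(g_1(x_0)\big)=g_2\big(g_1^{x_0}(x)+y_0\big)-g_2(y_0)=g_2^{y_0}\big(g_1^{x_0}(x)\big).
\]

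It then remains to verify that the two factors lie in the correct classes. Since $\I$ is invariant under translations and $g_1\in \I(E,G)$, the inner factor satisfies $g_1^{x_0}\in \I(E,G)$ with $\|g_1^{x_0}\|_\I\le \|g_1\|_\I$. For the outer factor, a direct computation gives $g_2^{y_0}(0)=0$ and $g_2^{y_0}(y)-g_2^{y_0}(y')=g_2(y+y_0)-g_2(y'+y_0)$, so $g_2^{y_0}\in \Lip_0(G,F)$ with $\Lip(g_2^{y_0})\le \Lip(g_2)$. Thus $f^{x_0}=g_2^{y_0}\circ g_1^{x_0}$ is an admissible factorization, giving $f^{x_0}\in \Lip_0\circ \I(E,F)$ and $\|f^{x_0}\|_{\Lip_0\circ \I}\le \|g_1^{x_0}\|_\I\,\Lip(g_2^{y_0})\le \|g_1\|_\I\,\Lip(g_2)$. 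Taking the infimum over all factorizations of $f$ then yields $\|f^{x_0}\|_{\Lip_0\circ \I}\le \|f\|_{\Lip_0\circ \I}$, as required.

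There is no serious obstacle here: the only point that requires care is recognizing that translating the composition by $x_0$ forces a translation of the inner map by $x_0$ \emph{and} a translation of the outer map by the induced point $y_0=g_1(x_0)$, and that this latter translation keeps the outer map in $\Lip_0$ with unchanged Lipschitz constant (in particular it still vanishes at $0$). The invariance hypothesis on $\I$ is used solely for the inner factor; the outer factor being merely Lipschitz is handled by the elementary observation above.
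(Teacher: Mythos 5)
Your proof is correct and follows essentially the same route as the paper's: factor $f=g_2\circ g_1$, observe $f^{x_0}=g_2^{g_1(x_0)}\circ g_1^{x_0}$, and use the invariance of $\I$ on the inner factor together with the elementary fact that translating the outer Lipschitz map preserves $\Lip_0$ and its Lipschitz constant. The paper dismisses the key identity as ``routine arguments''; you have simply written out the verification explicitly, including the norm estimate.
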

\begin{proof}
Fix Banach spaces $E$ and $F$ and let $f\in \Lip_0\circ \I(E,F)$. Take a factorization $f=g_1\circ g_2$ where $g_1 \in \I(E,G)$ and $g_2 \in \Lip_0(G,F)$, for some Banach space $G$ and consider $x_0 \in E$. Routine arguments show that $f^{x_0}=g_1^{g_{2}(x_0)}\circ g_2^{x_0}$ and since $g_1^{g_2(x_0)}\in \I(E,G)$ and $g_2^{x_0} \in \Lip_0(G,F)$, we obtain the result.
\end{proof}

Combining the above proposition with the example, we have

\begin{corollary} Let $\A$ a Banach operator ideal, then $\Lip_0\circ \A \circ \Lip_0$ is invariant under traslations.
\end{corollary}

\begin{lemma}
\label{lemma: if f belongs to maximal hull, then its differential at one point also belongs}
Let $\I$ be a maximal Banach Lipschitz operator ideal which is invariant under translations. Let $f\in \I(E,F)$ and suppose that $f$ is Gateaux differentiable at some $x_0$ in $E$. Then $Df(x_0) \in \I\cap \L(E,F)$ and $\|Df(x_{0})\|_{\I}\leq \|f\|_{\I}$.
\end{lemma}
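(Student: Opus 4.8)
The plan is to realize the linear operator $T:=Df(x_0)$ as a limit, in the appropriate finite-dimensional sense, of dilations of $f$ that lie in $\I$ with a uniformly controlled ideal norm, and then to recover $T\in\I=\I^{\max}$ by testing it against finite data. Since the Gateaux derivative is by definition a bounded linear operator, we already know $T\in\L(E,F)$; the content of the statement is to produce $T\in\I$ together with the estimate $\|T\|_{\I}\le\|f\|_{\I}$. The first step is to use the hypothesis that $\I$ is invariant under translations in order to reduce to $x_0=0$. Replacing $f$ by $g:=f^{x_0}$, we have $g\in\I(E,F)$ with $\|g\|_{\I}\le\|f\|_{\I}$ and $g(0)=0$, and a direct computation from the definition of the Gateaux derivative gives $Dg(0)=Df(x_0)=T$. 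Thus it suffices to treat a map $g\in\I(E,F)$ with $g(0)=0$ that is Gateaux differentiable at the origin.

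For $t>0$, let $S_\lambda$ denote multiplication by $\lambda$ (on $E$ or on $F$, as needed) and set $g_t:=S_{1/t}\circ g\circ S_t$, so that $g_t(x)=t^{-1}g(tx)$. Each $g_t$ lies in $\I(E,F)$ by the ideal property, and the defining inequality of the Lipschitz ideal norm gives $\|g_t\|_{\I}\le\Lip(S_t)\,\|g\|_{\I}\,\|S_{1/t}\|=t\cdot\|g\|_{\I}\cdot t^{-1}\le\|f\|_{\I}$, a bound independent of $t$. Moreover, since $g(0)=0$ and $g$ is Gateaux differentiable at $0$, we have $g_t(x)\to T(x)$ for every $x\in E$ as $t\to 0$.

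Next I would invoke maximality. By Theorem~\ref{Theo:_Reg_and_Ultrast} the ideal $\I$ is regular and ultrastable, and in particular $\I=\I^{\max}$, so it is enough to bound $\|q_L^F\circ T\circ\iota_{X_0}^E\|_{\I}$ by $\|f\|_{\I}$ uniformly over $X_0\in\MFIN(E)$ and $L\in\COFIN(F)$. Fix such $X_0$ and $L$ and put $\phi_t:=q_L^F\circ g_t\circ\iota_{X_0}^E$. By the ideal property, $\|\phi_t\|_{\I}\le\|g_t\|_{\I}\le\|f\|_{\I}$, and the pointwise convergence $g_t\to T$ yields $\phi_t\to q_L^F\circ T\circ\iota_{X_0}^E=:\psi$ pointwise on the finite set $X_0$ (taking for instance $t=t_n\to 0$). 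Since $X_0$ is finite and $F/L$ is finite-dimensional, $\Lip_0(X_0,F/L)$ is finite-dimensional; hence $\I(X_0,F/L)$ is a finite-dimensional, and therefore closed, subspace on which $\|\cdot\|_{\I}$ and $\Lip(\cdot)$ are equivalent. Pointwise convergence on $X_0$ is thus convergence in $\Lip(\cdot)$, so the limit $\psi$ belongs to $\I(X_0,F/L)$, and by equivalence of the two norms $\phi_t\to\psi$ also in $\|\cdot\|_{\I}$; consequently $\|\psi\|_{\I}\le\|f\|_{\I}$. Taking the supremum over $X_0$ and $L$ gives $\|T\|_{\I^{\max}}\le\|f\|_{\I}$, whence $T\in\I^{\max}(E,F)=\I(E,F)$ with $\|T\|_{\I}\le\|f\|_{\I}$, and since $T\in\L(E,F)$ we conclude $T\in\I\cap\L(E,F)$.

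The step I expect to be the main obstacle is the last one: upgrading the pointwise, and hence Lipschitz-norm, convergence of the dilations $\phi_t$ to convergence in the ideal norm, so that the limit $\psi$ inherits both membership in $\I$ and the bound $\|f\|_{\I}$. This is precisely where maximality is essential, since it reduces the claim to finite pointed metric domains and cofinite-dimensional quotients, where $\I(X_0,F/L)$ is finite-dimensional and all norms are comparable, allowing the uniform bound $\|\phi_t\|_{\I}\le\|f\|_{\I}$ to pass to the limit. Invariance under translations is equally indispensable, as without it the dilated maps $g_t$ would not be controlled in the ideal norm at a general point $x_0$.
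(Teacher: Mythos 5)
Your proof is correct, and it follows the paper's skeleton up to the crucial last step: the reduction to $x_0=0$ via translation invariance and the dilation trick $g_t=S_{1/t}\circ g\circ S_t$ with the uniform bound $\|g_t\|_{\I}\le\|f\|_{\I}$ are exactly what the paper does (it writes $\phi_t=h^F_{1/t}\circ f\circ h^E_t$). Where you diverge is in how maximality is used to pass from the pointwise limit $g_t\to Df(0)$ to membership in $\I$. The paper first reduces to a dual codomain via regularity (using $D(J_F\circ f)(x_0)=J_F\circ Df(x_0)$ and the fact that maximal ideals are regular), then invokes the Representation Theorem for maximal Lipschitz ideals together with \cite[Theorem~6.5]{CPCDJVVV} to conclude that pointwise (hence weak-star) convergence of the uniformly bounded net $\phi_t$ forces the limit into the unit ball of $\I$; this is a soft duality argument resting on the weak-star closedness of balls in $(X\widehat{\boxtimes}_\alpha E')'$. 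You instead test $T=Df(0)$ against the defining supremum of $\I^{\max}$ over $X_0\in\MFIN(E)$ and $L\in\COFIN(F)$, and exploit that $\Lip_0(X_0,F/L)$ is finite-dimensional (so that $\I(X_0,F/L)$ coincides with it as a set, all norms are equivalent, and pointwise convergence on the finite set upgrades to $\|\cdot\|_{\I}$-convergence), which lets the uniform bound pass to the limit locally before taking the supremum. Your route is more elementary and self-contained --- it needs neither the regularity reduction nor the tensor-norm representation, only the $\MFIN/\COFIN$ description of $\|\cdot\|_{\I^{\max}}$ already stated in Section~3 --- while the paper's route is shorter on the page because it delegates the limit argument to cited machinery. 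Both are valid; the one small point worth making explicit in your write-up is why $\psi=q_L^F\circ T\circ\iota_{X_0}^E$ lies in $\I(X_0,F/L)$ at all, which follows since every base-point-preserving Lipschitz map from a finite pointed metric space into a finite-dimensional space is a finite sum of rank-one maps $x\mapsto\lambda(x)e$ and these belong to $\I$ by the ideal property.
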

\begin{proof}
Take $f\in \I(E,F)$ Gateaux differentiable at $x_0$. Since $D(J_{F}\circ f)(x_{0})=J_{F}\circ Df(x_{0})$ and $\I$ is regular, we can assume that $F$ is a dual space. First, suppose that $f$ is Gateaux differentiable in $x_0=0$. For $t>0$, consider $\phi_{t}\in \Lip_0(E,F)$ defined by $\phi_t(v)=\frac{1}{t}(f(tv))$. Note that if $h_{t}^{E}\colon E\rightarrow E$ is the linear operator defined as $h_{t}^{E}(x)=t x $, the map $\phi_{t}$ can be written as $\phi_{t}=h_{\frac{1}{t}}^{F}\circ  f\circ h_{t}^{E}$.  Since $f$ belongs to $\I$, by the ideal property we infer that $\phi_{t}\in \I(E,F)$ for all $t>0$, and $\|\phi_{t}\|_{\I}\leq \|f\|_{\I}$.

By assumption, $\phi_{t}$ converges pointwise to $Df(0)$ as $t$ tends to $0$. In particular, $\phi_t$ converges in the weak-star Lipschitz operator topology of $\Lip_{0}(E,F)$. Since $\I$ is maximal, using the Representation Theorem of Maximal Banach Lipschitz operator ideal, we can apply \cite[Theorem 6.5]{CPCDJVVV} to obtain that $Df(0)\in \I(E,F)$ and $\|Df(0)\|_{\I}\leq \|f\|_{\I}$.

Now, we treat the case $x_{0}\neq 0$. For this, we consider the map $f^{x_{0}}$, which, by hypothesis, belongs to $\I(E,F)$ and $\|f^{x_{0}}\|_{\I}\leq \|f\|_{\I}$. It is clear that $f^{x_{0}}$ is Gateaux differentiable at $0$ and $Df^{x_{0}}(0)=Df(x_{0})$. By the previous case, we infer that $Df(x_{0})\in \I\cap \L(E,F)$, with 
\begin{align*}
\|Df(x_{0})\|_{\I}=\|Df^{x_{0}}(0)\|_{\I}\leq \|f^{x_{0}}\|_{\I}\leq \|f\|_{\I}
\end{align*}
This completes the proof.
\end{proof}

\begin{prop} Let $\I$ be a maximal Banach Lipschitz operator ideal which is invariant under translations, and let $E$ and $F$ be Banach spaces. Fix $T\in (\Lip_0\circ \I)\cap\L(E,F)$ and suppose that for every $\varepsilon >0$ there exist a Banach space $G$, a function $g\in \I(E,G)$ Gateaux differentiable at some $x_0$ in $E$ and $f\in \Lip_0(G,F)$ such that $T=f\circ g$ and  $\|T\|_{\Lip_{0}\circ \I}\leq \Lip(f)\|g\|_{\I}+\varepsilon$. Then $T\in \I\cap \L(E,F)$ and $\|T\|_{\I}=\|T\|_{\Lip_0\circ \I}$.
\end{prop}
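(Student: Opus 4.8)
The plan is to reduce everything to a setting where the Johnson--Maurey--Schechtman theorem applies and then to feed the resulting linear factorization into Lemma~\ref{lemma: if f belongs to maximal hull, then its differential at one point also belongs}. Since $\I$ is maximal it is regular by Theorem~\ref{Theo:_Reg_and_Ultrast}, so $\I^{\reg}=\I$ and it suffices to prove that $J_F\circ T\in \I(E,F'')$ together with the bound $\|J_F\circ T\|_{\I}\leq \|T\|_{\Lip_0\circ \I}$, after which regularity brings the conclusion back down to $F$. Fix $\varepsilon>0$ and take the factorization $T=f\circ g$ furnished by the hypothesis, with $g\in \I(E,G)$ Gateaux differentiable at some $x_0$, $f\in \Lip_0(G,F)$, and $\Lip(f)\|g\|_{\I}$ approximating $\|T\|_{\Lip_0\circ \I}$ up to $\varepsilon$.

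First I would pass to the bidual in order to enter the hypotheses of Theorem~\ref{thm: JMS lemma about differentiability}. Writing $J_F\circ T=(J_F\circ f)\circ g$, the target $F''$ is a dual space, the inner map $g$ is Gateaux differentiable at $x_0$, and $J_F\circ f\in \Lip_0(G,F'')$ with $\Lip(J_F\circ f)=\Lip(f)$ since $J_F$ is an isometry. Applying Theorem~\ref{thm: JMS lemma about differentiability} to $J_F\circ T$ then produces a linear operator $R\in \L(G,F'')$ with $\|R\|\leq \Lip(f)$ such that $J_F\circ T=R\circ Dg(x_0)$.

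Next I would control $Dg(x_0)$. Because $g\in \I(E,G)$ is Gateaux differentiable at $x_0$ and $\I$ is maximal and invariant under translations, Lemma~\ref{lemma: if f belongs to maximal hull, then its differential at one point also belongs} gives $Dg(x_0)\in \I\cap \L(E,G)$ with $\|Dg(x_0)\|_{\I}\leq \|g\|_{\I}$. Combining this with the previous step and the ideal property of $\I$, the identity $J_F\circ T=R\circ Dg(x_0)$ shows $J_F\circ T\in \I(E,F'')$ with $\|J_F\circ T\|_{\I}\leq \|R\|\,\|Dg(x_0)\|_{\I}\leq \Lip(f)\|g\|_{\I}$. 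By regularity we conclude $T\in \I\cap \L(E,F)$ and $\|T\|_{\I}=\|J_F\circ T\|_{\I}\leq \Lip(f)\|g\|_{\I}$.

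It remains to identify the norms. One inequality is immediate: factoring $T=\mathrm{id}_F\circ T$ with inner map $T\in \I(E,F)$ yields $\|T\|_{\Lip_0\circ \I}\leq \|T\|_{\I}$. For the reverse, I would invoke the $\varepsilon$-optimality built into the hypothesis: the chosen factorization satisfies $\|T\|_{\I}\leq \Lip(f)\|g\|_{\I}\leq \|T\|_{\Lip_0\circ \I}+\varepsilon$, and letting $\varepsilon\to 0$ gives $\|T\|_{\I}\leq \|T\|_{\Lip_0\circ \I}$, hence equality. The main obstacle is the bookkeeping that binds the second and third steps together: one must check that the linearization produced by Theorem~\ref{thm: JMS lemma about differentiability}, which replaces the \emph{outer} Lipschitz map $f$ by a linear operator $R$ of comparable norm, is compatible with the differentiation of the \emph{inner} map $g$ supplied by Lemma~\ref{lemma: if f belongs to maximal hull, then its differential at one point also belongs}, so that the two estimates $\|R\|\leq \Lip(f)$ and $\|Dg(x_0)\|_{\I}\leq \|g\|_{\I}$ multiply exactly into the optimized quantity $\Lip(f)\|g\|_{\I}$; the passage to $F''$ is precisely what makes Theorem~\ref{thm: JMS lemma about differentiability} applicable.
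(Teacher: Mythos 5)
Your proof is correct and follows essentially the same route as the paper: apply Theorem~\ref{thm: JMS lemma about differentiability} to $J_F\circ T=(J_F\circ f)\circ g$ to linearize the outer map, use Lemma~\ref{lemma: if f belongs to maximal hull, then its differential at one point also belongs} to place $Dg(x_0)$ in $\I\cap\L$ with the right norm bound, and then remove the bidual by regularity. The only cosmetic difference is that you invoke regularity of $\I$ itself (via Theorem~\ref{Theo:_Reg_and_Ultrast}), whereas the paper passes through the fact that $\I\cap\L$ is a maximal, hence regular, linear operator ideal (Lemma~\ref{lemma: linear restriction of a maximal lipschitz ideal is maximal}); both are valid.
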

\begin{proof}
For $\varepsilon>0$, pick a factorization $T=f \circ g$ as in the statement and assume that $\Lip(f)=1$. Applying Theorem \ref{thm: JMS lemma about differentiability}, we get an operator $R\in \L(G,F'')$ with $\|R\|\leq 1$ such that $J_F\circ T=R\circ Dg(x_{0})$. By Lemma \ref{lemma: if f belongs to maximal hull, then its differential at one point also belongs}, we have that $Dg(x_{0})\in \I\cap \L(E,G)$, with $\|Dg(x_{0})\|_{\I\cap \L}\leq \|g\|_{\I}$. Therefore $T\in (\I\cap \L)^{\reg}(E,F)=(\I\cap \L)(E,F)$ where the equality holds because $\I\cap \L$ is maximal by Lemma~\ref{lemma: linear restriction of a maximal lipschitz ideal is maximal}. Moreover, $\|T\|_{\I\cap \L}\leq \|g\|_{\I} +\varepsilon$. It follows that $\|T\|_{\I}\leq \|T\|_{\Lip_0 \circ \I}$. Since $\I\subset \Lip_{0}\circ \I$, we also have the reverse inequality.
\end{proof}

\begin{corollary}
Let $\I$ be a maximal Banach Lipschitz operator ideal which is invariant under translations.  Suppose that for every finite-dimensional Banach space $M$ and every Banach space $F$, every Lipschitz function $f\in \I(M,F)$ has a point of Gateaux differentiability in $M$. Then
$$(\Lip_{0}\circ \I)^{\max}\cap \L=\I\cap \L$$
\end{corollary}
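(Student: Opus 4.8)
The plan is to transfer the problem from the level of Lipschitz ideals to the level of linear ideals, where the preceding proposition together with the Gateaux differentiability hypothesis does the work. Since $\I$ is maximal, by Theorem~\ref{Theo:_Reg_and_Ultrast} it is regular and ultrastable; hence by Proposition~\ref{prop:ultrastable a izquierda} the ideal $\Lip_0\circ\I$ is ultrastable, and Corollary~\ref{coro: lipo amax lipo reg is maximal} yields $(\Lip_0\circ\I)^{\max}=(\Lip_0\circ\I)^{\reg}$. Combining this with Proposition~\ref{prop: Lip I reg cap L is equal to Lip I cap L max, where I is ultrastable} we obtain
\begin{align*}
(\Lip_0\circ\I)^{\max}\cap\L=(\Lip_0\circ\I)^{\reg}\cap\L=\big((\Lip_0\circ\I)\cap\L\big)^{\max}.
\end{align*}
Thus it remains to prove the identity of Banach operator ideals $\big((\Lip_0\circ\I)\cap\L\big)^{\max}=\I\cap\L$.

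By Lemma~\ref{lemma: linear restriction of a maximal lipschitz ideal is maximal}, $\I\cap\L$ is a maximal Banach operator ideal, so $\I\cap\L=(\I\cap\L)^{\max}$. Since the norm of a maximal hull is computed as a supremum over the restrictions $q_{F_0}^{F}\circ(\cdot)\circ\iota_{E_0}^{E}$ with $E_0\in\FIN(E)$ and $F_0\in\COFIN(F)$, I would reduce matters to showing that the two linear ideals $(\Lip_0\circ\I)\cap\L$ and $\I\cap\L$ are related, that is, that they coincide isometrically on pairs of finite-dimensional spaces; their maximal hulls then necessarily coincide. The inclusion $\I\cap\L\subset(\Lip_0\circ\I)\cap\L$ (with $\|\cdot\|_{\Lip_0\circ\I}\le\|\cdot\|_{\I}$) is immediate from $\I\subset\Lip_0\circ\I$, so only the reverse isometric inclusion on finite-dimensional components needs an argument.

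For that, I would fix finite-dimensional Banach spaces $E_0$ and $F_0$ and a linear operator $S\in\big((\Lip_0\circ\I)\cap\L\big)(E_0,F_0)$. Given $\varepsilon>0$, by definition of the norm of $\Lip_0\circ\I$ one may choose a factorization $S=h\circ g$ with $g\in\I(E_0,G)$, $h\in\Lip_0(G,F_0)$ and $\Lip(h)\,\|g\|_{\I}\le\|S\|_{\Lip_0\circ\I}+\varepsilon$. Because the domain $E_0$ is finite-dimensional, the standing hypothesis guarantees that $g$ is Gateaux differentiable at some $x_0\in E_0$. Hence $S$ satisfies the hypotheses of the preceding proposition, which yields $S\in\I\cap\L(E_0,F_0)$ with $\|S\|_{\I}=\|S\|_{\Lip_0\circ\I}$. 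This establishes the isometric coincidence on finite-dimensional spaces, so $(\Lip_0\circ\I)\cap\L$ and $\I\cap\L$ are related, and therefore $\big((\Lip_0\circ\I)\cap\L\big)^{\max}=(\I\cap\L)^{\max}=\I\cap\L$.

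The step I expect to be the main obstacle is the first paragraph: one must correctly pass between the maximal hull taken in the Lipschitz category and the maximal hull of the associated linear ideal. This identification hinges on ultrastability (through Remark~\ref{remark:ultra=reg} and Corollary~\ref{coro: lipo amax lipo reg is maximal}) and on Proposition~\ref{prop: Lip I reg cap L is equal to Lip I cap L max, where I is ultrastable}. Once it is in place, the finite-dimensionality of the domain is exactly what makes the differentiability hypothesis applicable, and the preceding proposition collapses the $\Lip_0\circ\I$-norm of linear operators to the $\I$-norm, so the remaining verification is routine.
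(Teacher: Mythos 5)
Your proposal is correct and follows essentially the same route as the paper: reduce to maximal linear operator ideals, use ultrastability of $\Lip_0\circ\I$ (via Theorem~\ref{Theo:_Reg_and_Ultrast} and Proposition~\ref{prop:ultrastable a izquierda}) together with Proposition~\ref{prop: Lip I reg cap L is equal to Lip I cap L max, where I is ultrastable} to identify $(\Lip_0\circ\I)^{\max}\cap\L$ with the maximal hull of $(\Lip_0\circ\I)\cap\L$, and then invoke the differentiability hypothesis and the preceding proposition to get the isometric coincidence with $\I\cap\L$ on finite-dimensional spaces. The only difference is cosmetic: the paper checks the norm equality directly on finite-dimensional components using Lemma~\ref{lemma: linear restriction of a maximal lipschitz ideal is maximal} for both ideals, whereas you phrase the same reduction in terms of the two linear ideals being related.
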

\begin{proof}
By Lemma~\ref{lemma: linear restriction of a maximal lipschitz ideal is maximal}, $\I\cap \L$ and $(\Lip_{0}\circ \I)^{\max}\cap \L$ are maximal Banach operator ideals. Therefore, we only have to check that their respective norms coincide over operators between finite-dimensional normed spaces. Fix finite--dimensional Banach spaces $M$ and $N$, and take $T\in \L(M,N)$. Since $\I$ is maximal, by Proposition~\ref{prop:ultrastable a izquierda} and Theorem~\ref{Theo:_Reg_and_Ultrast},  $\Lip_0\circ \I$ is ultrastable. Also, since $N$ is a finite--dimensional space, then $\Lip_0\circ \I(M,N)=(\Lip_0\circ \I)^{\reg}(M,N)$. This all together gives that $\Lip_0\circ \I(M,N)=(\Lip_0\circ \I)^{\max}(M,N)$

 Then, by Proposition~\ref{prop: Lip I reg cap L is equal to Lip I cap L max, where I is ultrastable} and the above proposition, we have that
$$ \|T\|_{(\Lip_{0}\circ \I)^{\max}\cap \L}=\|T\|_{(\Lip_{0}\circ \I)\cap \L}= \|T\|_{\I\cap \L},$$and the proof is complete.
\end{proof}
As an application, we have

\begin{corollary}\label{Coro:Gdif}
Let $\A$ be a maximal Banach operator ideal such the Lipschitz ideal $\A\circ Lip_{0}$ has the property that for every finite-dimensional Banach space $M$ and every Banach space $F$, every Lipschitz function $f\in \A\circ \Lip_0(M,F)$ has a point of Gateaux differentiability in $M$. Then
$$
(\Lip_0\circ \A\circ \Lip_0)\cap \L=(\Lip_0\circ \A\circ \Lip_0)^{\max}\cap \L= \A.
$$
\end{corollary}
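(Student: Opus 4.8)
The plan is to apply the preceding corollary to the maximal Banach Lipschitz operator ideal $\I:=\A\circ\Lip_0$, and then to identify the resulting linear restriction with $\A$ itself. So first I would verify that $\I=\A\circ\Lip_0$ satisfies the three hypotheses of that corollary. Maximality of $\I$ follows from the maximality of $\A$ together with the discussion after Theorem~\ref{Theo:_Reg_and_Ultrast} (the alternative proof of \cite[Proposition~4.1]{TV}). Invariance under translations is exactly the content of the Example above. Finally, the Gateaux differentiability assumption required there, namely that for every finite-dimensional $M$ and every $F$ each $f\in\I(M,F)=\A\circ\Lip_0(M,F)$ has a point of Gateaux differentiability in $M$, is precisely the hypothesis imposed on $\A\circ\Lip_0$ in the present statement. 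Thus the corollary applies and yields $(\Lip_0\circ\I)^{\max}\cap\L=\I\cap\L$.

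Next I would translate this back in terms of $\A$. One first checks that $\Lip_0\circ(\A\circ\Lip_0)=\Lip_0\circ\A\circ\Lip_0$ isometrically: a factorization $f=g_2\circ g_1$ with $g_1\in\A\circ\Lip_0$, say $g_1=T\circ h$ with $T\in\A$ and $h\in\Lip_0$, is the same as a factorization $f=g_2\circ T\circ h$ through $\A$, and taking the infimum of $\Lip(g_2)\|T\|_\A\Lip(h)$ over all such factorizations identifies the two ideal norms. Hence the corollary gives $(\Lip_0\circ\A\circ\Lip_0)^{\max}\cap\L=(\A\circ\Lip_0)\cap\L$. Since $\A$ is maximal, \cite[Proposition~3.3]{TV} gives $(\A\circ\Lip_0)\cap\L=\A$, so $(\Lip_0\circ\A\circ\Lip_0)^{\max}\cap\L=\A$.

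It then remains to handle the middle term by sandwiching. From the inclusion $\A\circ\Lip_0\subset\Lip_0\circ\A\circ\Lip_0$ (factor on the left through the identity map, which is Lipschitz) one obtains $\A=(\A\circ\Lip_0)\cap\L\subset(\Lip_0\circ\A\circ\Lip_0)\cap\L$, while the trivial inclusion $\Lip_0\circ\A\circ\Lip_0\subset(\Lip_0\circ\A\circ\Lip_0)^{\max}$ gives $(\Lip_0\circ\A\circ\Lip_0)\cap\L\subset(\Lip_0\circ\A\circ\Lip_0)^{\max}\cap\L=\A$. Chaining these two inclusions forces all three classes to coincide with $\A$.

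Given the preceding corollary, the argument is essentially bookkeeping rather than a genuine obstacle; the only point that requires care is to ensure that all the identifications are isometric and not merely set-theoretic. This is guaranteed by the infimum formulas defining the composition norms and by the fact that the preceding corollary is stated as an equality of Banach operator ideals (so in particular their norms agree).
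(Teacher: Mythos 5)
Your proposal is correct and follows essentially the same route as the paper: apply the preceding corollary to $\I=\A\circ\Lip_0$ (which is maximal, translation invariant by the Example, and satisfies the differentiability hypothesis by assumption), identify $(\A\circ\Lip_0)\cap\L=\A$ via \cite[Proposition~3.3]{TV}, and sandwich the middle term using $\A\circ\Lip_0\subset\Lip_0\circ\A\circ\Lip_0\subset(\Lip_0\circ\A\circ\Lip_0)^{\max}$. Your write-up just makes explicit the bookkeeping (the identification $\Lip_0\circ(\A\circ\Lip_0)=\Lip_0\circ\A\circ\Lip_0$ and the isometry checks) that the paper leaves implicit.
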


\begin{proof}
By \cite[Proposition~3.3]{TV} and \cite[Proposition~4.1]{TV}, $\A\circ \Lip_0$ is a maximal Lipschitz operator ideal and $(\A\circ \Lip_0)\cap \L=\A$. The inclusions
$$\A\circ \Lip_0\subset \Lip_0\circ \A\circ \Lip_0\subset (\Lip_0\circ \A\circ \Lip_0)^{\max}$$
and above corollary complete the proof.
\end{proof}

We can rewrite part of the above corollary in terms of factorizations of linear operators.

\begin{theorem}\label{thm: Factorization}
Let $E$ and $F$ Banach spaces, $\A$ a Banach operator ideal and $T\in \L(E,F)$. Suppose that there is a factorization of $T$ as 
$$
\xymatrix{
E\ar[r]^T \ar[d]_{f_1}& F\\
G_1 \ar[r]^R & G_2 \ar[u]_{f_2}
}
$$
where $G_1$ and $G_2$ are Banach spaces, $f_1$ and $f_2$ are Lipschitz maps, and $R\in \A(G_1,G_2)$. If $\A$ is a maximal Banach operator ideal such the Lipschitz ideal $\A\circ Lip_{0}$ has the property that for every finite-dimensional Banach space $M$ and every Banach space $F$, every Lipschitz function $f\in \A\circ \Lip_0(M,F)$ has a point of Gateaux differentiability in $M$, then $T \in \A(E,F)$.
\end{theorem}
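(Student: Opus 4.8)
The plan is to read this theorem as nothing more than a reformulation of Corollary~\ref{Coro:Gdif} in the language of factorizations: the whole task is to translate the given diagram into a membership statement for the composition ideal $\Lip_0 \circ \A \circ \Lip_0$ and then invoke that corollary, whose hypotheses (maximality of $\A$ together with the Gateaux-differentiability property of $\A \circ \Lip_0$ on finite-dimensional domains) are exactly the ones assumed here. So I would first certify that the factorization $T = f_2 \circ R \circ f_1$ places $T$ inside $(\Lip_0 \circ \A \circ \Lip_0) \cap \L(E,F)$, and then quote the identity $(\Lip_0 \circ \A \circ \Lip_0)\cap \L = \A$ supplied by Corollary~\ref{Coro:Gdif}.

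The one point requiring a little care is that the hypothesis only furnishes Lipschitz maps $f_1, f_2$, not necessarily base-point preserving ones, whereas $\Lip_0 \circ \A \circ \Lip_0$ is assembled from $\Lip_0$ maps. I would absorb the constants using the linearity of $R$ and the fact that $T(0) = 0$: setting $g_1 = f_1 - f_1(0) \in \Lip_0(E,G_1)$ and $d = R(f_1(0)) \in G_2$ gives $R \circ f_1 = R \circ g_1 + d$, and then putting $g_2(y) = f_2(y+d) - f_2(d) \in \Lip_0(G_2,F)$ yields $T = g_2 \circ R \circ g_1 + f_2(d)$. Evaluating at $0$ and using $g_1(0) = g_2(0) = 0$ and $T(0)=0$ forces $f_2(d) = 0$, so that $T = g_2 \circ R \circ g_1$ with $g_1 \in \Lip_0(E,G_1)$, $g_2 \in \Lip_0(G_2,F)$ and $R \in \A(G_1,G_2)$. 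This exhibits $T$ as an element of $(\Lip_0 \circ \A \circ \Lip_0)(E,F)$, and since $T$ is linear by hypothesis, $T \in (\Lip_0 \circ \A \circ \Lip_0)\cap \L(E,F)$.

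With that in hand, I would simply apply Corollary~\ref{Coro:Gdif}: under the assumed conditions on $\A$ one has $(\Lip_0 \circ \A \circ \Lip_0)\cap \L = \A$, so $T \in \A(E,F)$, which is the assertion. The hard part, if any, is not in this theorem at all: the substantive content lives in Corollary~\ref{Coro:Gdif} and its supporting results, namely Lemma~\ref{lemma: if f belongs to maximal hull, then its differential at one point also belongs} and Lemma~\ref{lemma: linear restriction of a maximal lipschitz ideal is maximal}, where the Gateaux-differential is extracted via Theorem~\ref{thm: JMS lemma about differentiability} and shown to remain in $\A$ using maximality. The present statement is a packaging of that corollary for a reader who prefers the factorization formulation, so I expect the base-point reduction to be the only genuinely new (and entirely routine) step.
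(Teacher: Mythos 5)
Your proposal is correct and matches the paper's intent exactly: the paper states this theorem with no separate proof, introducing it as a rewriting of Corollary~\ref{Coro:Gdif} in factorization language, which is precisely the reduction you carry out. Your explicit base-point normalization (replacing $f_1$, $f_2$ by $g_1$, $g_2$ vanishing at the appropriate points, using linearity of $R$ and $T(0)=0$) is the only step the paper leaves implicit, and you handle it correctly.
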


\begin{remark}\label{remark: Ideales} The ideals of $p$-integral operators, $p$-summing operators with $1\leq p<\infty$, the $(p,q)$-dominated operators, with $1<p,q<\infty$, $1/p+1/q\leq 1$ and the $p$-factorable operators with $1<p<\infty$ are maximal and satisfy the hypothesis in Theorem~\ref{thm: Factorization}. Indeed, take a linear operator $T\colon E\rightarrow F$ such that it can be factorized as $T=R\circ S$ where $S\colon E\rightarrow G$ and $R\colon G\rightarrow F$ are linear operators and $G$ is a Banach space with the Radon-Nikod\'ym  property. Then if $f\in \Lip_0(M,E)$ with $M$ a finite-dimensional space, then $T\circ f=R\circ S\circ f$ and, since by \cite[Theorem~6.42]{BL} $S\circ f$ has a point in which is Gateaux differentiable, then $T\circ f$ is Gateaux differentiable at the same point. Now, since 
 for $1<p<\infty$, every $p$-integral, $p$-summing, $(p,q)$-dominated  ($1/p+1/q\leq 1$) or $p$-factorable operator can be factorized via a closed subspace of $L_p(\mu)$ (and therefore with the Radon-Nikod\'ym  property) with two linear operators, we obtain part of the claim. For $p=1$, just use that every $1$-integral or $1$-summing operator is $2$-summing, and in particular, it factorizes through a reflexive Banach space (and therefore, with the Radon-Nikod\'ym property).
\end{remark}

We finish with a brief discussion about in which cases $\Lip_0\circ \A^{\max} \circ \Lip_0$ is of composition type or not. For this, recall the definition of a right-accessible Banach operator ideal. Following \cite[25.2]{DF}, a Banach operator ideal $\A$ is right-accessible if for every finite-dimensional space $M$ and every Banach space $F$, we have $\A(M,F)=\A^{\min}(M,F)$. In particular, every operator ideal named in Remark~\ref{remark: Ideales} is right-accessible.
\begin{lemma} Let $\A$ be a right-accessible Banach operator ideal. Then for every finite pointed metric space $X_0$ and every Banach space $E$, $\Lip_0\circ \A \circ \Lip_0(X_0,E)=\Lip_0\circ \A^{\min} \circ \Lip_0(X_0,E)$. In particular, both ideals have the same minimal kernel and the same maximal hull.
\end{lemma}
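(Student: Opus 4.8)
The plan is to establish the isometric equality of the two composition ideals on finite pointed metric spaces and then read off the statement about hulls and kernels. The inclusion $\Lip_0\circ \A^{\min}\circ \Lip_0\subset \Lip_0\circ \A\circ \Lip_0$, together with $\|\cdot\|_{\Lip_0\circ \A\circ \Lip_0}\le\|\cdot\|_{\Lip_0\circ \A^{\min}\circ \Lip_0}$, is immediate from $\A^{\min}\subset \A$ and holds over arbitrary spaces. The whole content lies in the reverse inclusion, and the one observation that makes it work is that a \emph{finite} pointed metric space $X_0$ has a finite-dimensional Arens--Eells space $\Ae(X_0)$, being the linear span of the finitely many Dirac elements $\delta_{X_0}(x)$. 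This is exactly the hypothesis under which right-accessibility of $\A$ can be invoked.

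Concretely, I would take $f\in \Lip_0\circ \A\circ \Lip_0(X_0,E)$ and fix a factorization $f=g_2\circ T\circ g_1$ with $g_1\in \Lip_0(X_0,G_1)$, $T\in \A(G_1,G_2)$ and $g_2\in \Lip_0(G_2,E)$. Replacing $g_1$ by its linearization, I write $g_1=L_{g_1}\circ \delta_{X_0}$ with $L_{g_1}\in \L(\Ae(X_0),G_1)$ and $\|L_{g_1}\|=\Lip(g_1)$. The operator $S:=T\circ L_{g_1}\colon \Ae(X_0)\to G_2$ lies in $\A(\Ae(X_0),G_2)$ by the ideal property, with $\|S\|_{\A}\le \|T\|_{\A}\Lip(g_1)$. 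Since $\Ae(X_0)$ is finite-dimensional and $\A$ is right-accessible, $\A(\Ae(X_0),G_2)=\A^{\min}(\Ae(X_0),G_2)$ isometrically, so $S\in \A^{\min}(\Ae(X_0),G_2)$ with $\|S\|_{\A^{\min}}=\|S\|_{\A}$.

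Rewriting $f=g_2\circ S\circ \delta_{X_0}$ now exhibits $f$ as a member of $\Lip_0\circ \A^{\min}\circ \Lip_0(X_0,E)$, since $\delta_{X_0}\in \Lip_0(X_0,\Ae(X_0))$ with $\Lip(\delta_{X_0})=1$. From the definition of the composition norm,
\[
\|f\|_{\Lip_0\circ \A^{\min}\circ \Lip_0}\le \Lip(\delta_{X_0})\,\|S\|_{\A^{\min}}\,\Lip(g_2)\le \|T\|_{\A}\,\Lip(g_1)\,\Lip(g_2),
\]
and taking the infimum over all factorizations of $f$ gives $\|f\|_{\Lip_0\circ \A^{\min}\circ \Lip_0}\le \|f\|_{\Lip_0\circ \A\circ \Lip_0}$; combined with the trivial reverse inequality this yields the asserted isometric equality.

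For the final assertion, I would observe that this equality holds for \emph{every} Banach space $E$, in particular for every finite-dimensional $E_0$, so $\Lip_0\circ \A\circ \Lip_0$ and $\Lip_0\circ \A^{\min}\circ \Lip_0$ coincide isometrically on all pairs $(X_0,E_0)$ with $X_0$ finite and $E_0$ finite-dimensional; that is, the two ideals are related. Related ideals lie in the same equivalence class for the relation ``related'', and hence share the unique biggest and smallest members of that class, namely their common maximal hull and minimal kernel. The argument has no real obstacle beyond the key step: the passage through the finite-dimensional space $\Ae(X_0)$, which is what turns the abstract right-accessibility hypothesis into the isometric upgrade of the inner linear operator $S$ from $\A$ to $\A^{\min}$; everything else is bookkeeping with the composition norms.
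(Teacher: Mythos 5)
Your proof is correct and follows essentially the same strategy as the paper: exploit the finiteness of $X_0$ to place the $\A$-operator on a finite-dimensional domain, where right-accessibility upgrades it isometrically to $\A^{\min}$. The only cosmetic difference is that the paper replaces $G_1$ by the linear span of $g_1(X_0)$, whereas you factor $g_1$ through the finite-dimensional space $\Ae(X_0)$ via its linearization; both devices accomplish the same reduction, and the norm bookkeeping and the ``related ideals'' conclusion are identical.
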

\begin{proof}
Let $X_0$ be a finite pointed metric space and $E$ a  Banach space, and $f\in \Lip_0\circ \A \circ \Lip_0(X_0,E)$. Pick a factorization
$$
\xymatrix{
X_0 \ar[r]^{f}\ar[d]_{g_1}& E\\
G_1\ar[r]^{T} & G_2 \ar[u]_{g_2}
}$$
where $G_1$ and $G_2$ are Banach spaces, $g_1$ and $g_2$ are base-point preserving Lipschitz maps, and $T\in \A(G_1,G_2)$. We have to see that $\|f\|_{\Lip_{0}\circ \A^{\min}\circ \Lip_{0}}\leq \Lip(g_2) \|T\|_{\A} \Lip(g_1) $ Since $X$ is finite, we can assume that $F$ is finite-dimensional (for instance, we may replace $G_1$ by the linear span of $g_1(X_0)$). Since $\A$ is right-accessible, we know that $\A^{\min}(G_1,G_2)=\A(G_1,G_2)$ isometrically. Therefore,
\begin{align*}
\|f\|_{\Lip_{0}\circ \A^{\min}\circ \Lip_{0}}\leq \Lip(g_2) \|T\|_{\A^{\min}} \Lip(g_1)=\Lip(g_2) \|T\|_{\A}\Lip(g_1).
\end{align*}
Since the other inclusion always holds, the proof is complete.
\end{proof}

\begin{prop}
Let $\A$ be a right-accessible Banach operator ideal. If $\Lip_0\circ \A \circ \Lip_0$ is of composition type, then $id_{\Ae(E)}\in \A^{\max}(\Ae(E),\Ae(E))$ for every finite-dimensional normed space $E$.
\end{prop}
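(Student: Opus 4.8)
The plan is to reduce the statement to Proposition~\ref{Prop: Minimal of composition type}, which already establishes the analogous conclusion under the hypothesis that $(\Lip_0\circ \A^{\min}\circ \Lip_0)^{\min}$ is of composition type. The bridge between the two settings is the preceding lemma, which uses right-accessibility of $\A$ to identify the minimal kernels of $\Lip_0\circ \A\circ \Lip_0$ and $\Lip_0\circ \A^{\min}\circ \Lip_0$.

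First I would unwind the hypothesis: since $\Lip_0\circ \A\circ \Lip_0$ is of composition type, there exists a Banach operator ideal $\B$ with $\Lip_0\circ \A\circ \Lip_0=\B\circ \Lip_0$. The crucial observation is that being of composition type is inherited by the minimal kernel. Indeed, by \cite[Proposition~4.4]{TV} (together with the minimality of $\B^{\min}\circ \Lip_0$ from \cite[Theorem~4.8]{TV}) one has $(\B\circ \Lip_0)^{\min}=\B^{\min}\circ \Lip_0$, and the right-hand side is again of composition type, now with the Banach operator ideal $\B^{\min}$ in the middle. Hence $(\Lip_0\circ \A\circ \Lip_0)^{\min}$ is of composition type. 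Next I would invoke the preceding lemma: because $\A$ is right-accessible, $\Lip_0\circ \A\circ \Lip_0$ and $\Lip_0\circ \A^{\min}\circ \Lip_0$ share the same minimal kernel, so $(\Lip_0\circ \A^{\min}\circ \Lip_0)^{\min}=(\Lip_0\circ \A\circ \Lip_0)^{\min}$ is of composition type as well. At this point the hypothesis of Proposition~\ref{Prop: Minimal of composition type} is satisfied, and that result yields precisely $id_{\Ae(E)}\in \A^{\max}(\Ae(E),\Ae(E))$ for every finite-dimensional normed space $E$.

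The step I expect to require the most care is verifying that composition type descends to the minimal kernel, that is, the identity $(\B\circ \Lip_0)^{\min}=\B^{\min}\circ \Lip_0$; this rests on the fact that $\B^{\min}\circ \Lip_0$ is a minimal Lipschitz operator ideal which is related to $\B\circ \Lip_0$, forcing it to coincide with the minimal kernel. Once this is in place, the remainder is a direct chaining of the preceding lemma with Proposition~\ref{Prop: Minimal of composition type}, and no new quantitative estimate is needed.
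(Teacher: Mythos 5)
Your proposal is correct and follows essentially the same route as the paper: pass to the minimal kernel (the paper cites \cite[Theorem~4.8]{TV} for the fact that composition type is inherited by the minimal kernel, which is exactly the identity $(\B\circ\Lip_0)^{\min}=\B^{\min}\circ\Lip_0$ you spell out), use the preceding lemma to identify it with $(\Lip_0\circ\A^{\min}\circ\Lip_0)^{\min}$ via right-accessibility, and conclude with Proposition~\ref{Prop: Minimal of composition type}. Your extra care in justifying the descent of composition type to the minimal kernel is a useful elaboration but not a different argument.
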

\begin{proof}
If $\Lip_0\circ \A \circ \Lip_0$ is of composition type, then $(\Lip_0\circ \A \circ \Lip_0)^{\min}$ is also of composition type by \cite[Theorem~4.8]{TV}. By the above, $(\Lip_0\circ \A^{\min} \circ \Lip_0)^{\min}$ is of composition type. The conclusion follows by applying Proposition~\ref{Prop: Minimal of composition type}.
\end{proof}

\subsection*{Acknowledgments} We are indebted to the anonymous referee for her/his selfless suggestions that significantly improved this article.
This project was supported in part by CONICET PIP 1609. N. Albarrac\'{\i}n is also supported by a CONICET doctoral fellowship. P. Turco is also supported by ANCyT-PICT 2019-00080. 

\end{document}